\DeclareUrlCommand\doi{\def\UrlLeft##1\UrlRight{doi:\href{http://dx.doi.org/##1}{##1}}\urlstyle{rm}}
\newtheorem{theo}{Theorem}[section]
\newtheorem{prop}[theo]{Proposition}
\newtheorem{lemm}[theo]{Lemma}
\newtheorem{coro}[theo]{Corollary}
\newtheorem{fact}[theo]{Fact}
\theoremstyle{definition}
\theoremstyle{remark}
\newtheorem{rema}[theo]{Remark}
\newtheorem{example}[theo]{Example}
\numberwithin{equation}{section}
\renewcommand{\Re}{\operatorname{Re}}
\renewcommand{\Im}{\operatorname{Im}}
\newcommand{\phoro}[1]{\mathtt{#1}}
\newcommand{\psinh}{\operatorname{\phoro{sinh}}}
\newcommand{\pcosh}{\operatorname{\phoro{cosh}}}
\newcommand{\ptanh}{\operatorname{\phoro{tanh}}}
\newcommand{\psin}{\operatorname{\phoro{sin}}}
\newcommand{\pcos}{\operatorname{\phoro{cos}}}
\newcommand{\ptan}{\operatorname{\phoro{tan}}}
\newcommand{\resp}[1]{\textup{(}resp. #1\textup{)}}
\newcommand{\circlenum}[1]{\raisebox{.5pt}{\textcircled{\raisebox{-.9pt} {#1}}}}
\newcommand*\circled[1]{\tikz[baseline=(char.base)]{
    \node[shape=circle,draw,inner sep=1.5pt,label=center:#1] (char) {\phantom{0}};}}
\newcommand{\inner}[2]{\left\langle{#1},{#2}\right\rangle}
\newcommand{\vect}[1]{\boldsymbol{#1}}
\newsavebox\mybox
\setlist[itemize]{leftmargin=2.2em}
\title[Analysis of timelike Thomsen surfaces]{Analysis of timelike Thomsen surfaces \\-- with deformations and singularities}
\author{Shintaro Akamine}
\address[Shintaro Akamine]{Graduate School of Mathematics, Nagoya University, Chikusa-ku, Nagoya 464-8502, Japan}
\email{s-akamine@math.nagoya-u.ac.jp}
\author{Joseph Cho}
\address[Joseph Cho]{Department of Mathematics, Graduate School of Science, Kobe University, 1-1 Rokkodai-cho, Nada-ku, Kobe 657-8501, Japan}
\email{joseph.cho@stu.kobe-u.ac.jp}
\author{Yuta Ogata}
\address[Yuta Ogata]{Department of Science and Technology, National Institute of Technology, Okinawa College, 905 Henoko, Nago, Okinawa 905-2171, Japan}
\email{y.ogata@okinawa-ct.ac.jp}
\begin{document}

\keywords{timelike minimal surface, planar curvature line, affine minimal surface, singularity, null curve}
\subjclass[2010]{Primary 53A10; Secondary 53A15, 53B30, 57R45.}

\begin{abstract}
Timelike Thomsen surfaces are timelike minimal surfaces that are also affine minimal.
In this paper, we make use of both the Lorentz conformal coordinates and the null coordinates, and their respective representation theorems of timelike minimal surfaces, to obtain a complete global classification of these surfaces and to characterize them using a geometric invariant called lightlike curvatures.
As a result, we reveal the relationship between timelike Thomsen surfaces, and timelike minimal surfaces with planar curvature lines.
As an application, we give a deformation of null curves preserving the pseudo-arclength parametrization and the constancy of the lightlike curvatures.
\end{abstract}

\maketitle

\section{Introduction}
The relationship between two particular classes of minimal surfaces in Euclidean $3$-space $\mathbb{R}^3$, those with planar curvature lines and those that are also affine minimal, has been known since the early twentieth century.
Minimal surfaces with planar curvature lines were first studied by Bonnet, who noticed that the planar curvature lines transform into orthogonal systems of cycles on the sphere under the Gauss map \cite{bonnet_observations_1855}.
Using this fact, Bonnet found the class of minimal surfaces now often referred to as Bonnet minimal surfaces; however, in his work, he left out the well-known Enneper surface \cite{enneper_untersuchungen_1878}, which also has planar curvature lines \cite[\S 175]{nitsche_vorlesungen_1975}.
On the other hand, Thomsen studied minimal surfaces that are also affine minimal \cite{thomsen_uber_1923}, now referred to as Thomsen surfaces.
Thomsen noted that the asymptotic lines of Thomsen surfaces transform into orthogonal systems of cycles on the $2$-sphere under the Gauss map, showing that Thomsen surfaces are conjugate minimal surfaces of those with planar curvature lines \cite[\S 71]{blaschke_vorlesungen_1923}.
Finally, works such as \cite{schaal_ennepersche_1973, barthel_thomsensche_1980} have shown that there exists a deformation consisting exactly of the Thomsen surfaces.

One can also consider the analogous results for maximal surfaces in Minkowski $3$-space $\mathbb{R}^{2,1}$, which are spacelike surfaces with zero mean curvature by definition.
Leite has classified all maximal surfaces with planar curvature lines in \cite{leite_surfaces_2015}.
Then, Manhart showed that maximal Thomsen surfaces, defined as maximal surfaces that are also affine minimal, are conjugate maximal surfaces of those with planar curvature lines \cite{manhart_bonnet-thomsen_2015}.
And finally in \cite{cho_deformation_2018}, it was shown that there is a deformation consisting exactly of the maximal surfaces with planar curvature lines.

In this paper, we consider the timelike minimal analogue of the two classes of surfaces in Minkowski $3$-space, and clarify their relationship.
We first focus on the class of timelike minimal surfaces with planar curvature lines, and consider its classification.
To achieve this, we use the following method: First, as in \cite{cho_deformation_2017, cho_deformation_2018} (see also \cite{abresch_constant_1987, wente_constant_1992}), using the Lorentz conformal coordinates, we express the timelike minimality condition and the planar curvature line condition via a system of partial differential equations in terms of the Lorentz conformal factor.
Then as in \cite{cho_deformation_2018} (see also \cite{wente_counterexample_1986, walter_explicit_1987}), from the solutions of the system of partial differential equations, we show and utilize the existence of axial directions to recover the Weierstrass data \cite{weierstrass_untersuchungen_1866} for the Weierstrass-type representation for timelike minimal surfaces given by Konderak \cite{konderak_weierstrass_2005} (see Fact \ref{fact:Weierstrass1}).
With the Weierstrass data, we give a complete classification of all timelike minimal surfaces with planar curvature lines (see Theorem \ref{theo:wData}).

Then we switch our attention to the class of timelike Thomsen surfaces, defined by Magid in \cite{magid_timelike_1991-1} as timelike minimal surfaces that are also affine minimal.
In his work, Magid considered the null coordinates representation of timelike minimal surfaces found by McNertney in \cite{mcnertney_one-parameter_1980} (see Fact \ref{fact:Weierstrass2}), where a timelike minimal surface is obtained via two generating null curves.
Using this representation, he applied the result given by Manhart in \cite{manhart_affinminimalruckungsflachen_1985} on affine minimal surfaces of particular form, and obtained an explicit parametrization for the generating null curves of timelike Thomsen surfaces.


Therefore, to investigate the relationship between the two classes of timelike minimal surfaces, we now shift the focus to null coordinates.
We first characterize timelike minimal surfaces with planar curvature lines in terms of geometric invariants of their generating null curves, called \emph{lightlike curvatures} (see Theorem \ref{thm:curvature_characterization2}).
As an application, we obtain deformations of null curves preserving the pseudo-arclength parametrization and the constancy of lightlike curvatures.
Then, interpreting Magid's result on timelike Thomsen surfaces in terms of lightlike curvatures, we reveal a relationship between the two classes of timelike minimal surfaces that differs from that of the minimal case in $\mathbb{R}^3$ and the maximal case in $\mathbb{R}^{2,1}$ (see Theorem \ref{thm:Thomsen_Bonnet_relation}).

In the appendix, similar to \cite{cho_deformation_2018}, we use the axial directions to show that there exists a deformation consisting exactly of all timelike Thomsen surfaces (see Theorem \ref{theo:deformation} and Corollary \ref{cor:deformationThomsen}).
On the other hand, it is possible to consider the singularities appearing on timelike minimal surfaces by viewing the surfaces as \emph{generalized timelike minimal surfaces} as defined in \cite[Definition 2.4]{kim_spacelike_2011}.
Furthermore, in \cite{takahashi_tokuitenwo_2012}, \emph{minfaces} were defined as a class of timelike minimal surfaces admitting certain types of singularities, a timelike minimal analogue of \emph{maxfaces} defined by Umehara and Yamada in \cite[Definition 2.2]{umehara_maximal_2006} for maximal surfaces.
It is known that every minface is a generalized timelike minimal surface; however, there exist generalized timelike minimal surfaces that are not minfaces on their domains (see, for example, \cite[Example 2.7]{kim_spacelike_2011}).
By showing that timelike Thomsen surfaces are minfaces, we recognize the types of singularities appearing on these surfaces, using the criterion introduced in \cite{takahashi_tokuitenwo_2012} (see Theorem \ref{theo:singularityType} and Corollary \ref{cor:singularities_Bonnet}).

\section{Timelike minimal surfaces with planar curvature lines}
In this section, we aim to completely classify timelike minimal surfaces with planar curvature lines.
To achieve this, we propose the following method:
First, we derive a system of partial differential equations for the Lorentz conformal factor from the integrability condition for timelike minimal surfaces and the planar curvature line condition.
Then, using the explicit solutions of the Lorentz conformal factor, we calculate the unit normal vector, and then recover the Weierstrass data using the notion of axial directions.
In doing so, we show the existence of axial directions for these surfaces; by normalizing these axial directions, we eliminate the freedom of isometry in the ambient space, and complete the classification.
The techniques used in this section mirror those of \cite{cho_deformation_2017, cho_deformation_2018}; therefore, we do not explicitly state all the proofs; in place, we sometimes state the outlines of proofs.

\subsection{Paracomplex analysis}
First, we briefly introduce the set of paracomplex numbers $\mathbb{C}'$, and the theory of paracomplex analysis.
For a more detailed introduction, we refer the readers to works such as \cite{akamine_behavior_nodate, inoguchi_timelike_2004, konderak_weierstrass_2005, yasumoto_weierstrass-type_2018}.

We consider the set of paracomplex numbers $\mathbb{C}'$
	\[
		\mathbb{C}' := \{ z = x + j y : x, y \in \mathbb{R} \}
	\]
where $j$ is the imaginary unit such that $j^2 = 1$.
Let $z = x + jy$ denote any paracomplex number.
We call $\Re z := x$ and $\Im z := y$ the \emph{real} and \emph{imaginary parts} of $z$, respectively; furthermore, analogous to the set of complex numbers, we use $\bar{z} := x - jy$ to denote the \emph{paracomplex conjugate} of $z$.
In this paper, we denote the \emph{squared norm} of $z$ as $| z |^2 = z \bar{z} = x^2 - y^2$, which may not necessarily be positive.

We also have the paracomplex Wirtinger derivatives $\partial_z := \frac{1}{2}\left(\partial_x + j \partial_y \right)$ and $\partial_{\bar{z}} := \frac{1}{2}\left(\partial_x - j \partial_y \right)$.
Given a paracomplex function (typeset using typewriter font throughout the paper) $\phoro{f} : \Sigma \subset \mathbb{C}' \to \mathbb{C}'$ where $\Sigma$ is a simply-connected domain, we call $\phoro{f}$ \emph{paraholomorphic} if $\phoro{f}$ satisfies the Cauchy-Riemann type conditions,
	\begin{equation}\label{eqn:cauchyRiemann}
		\phoro{f}_{\bar{z}} = \partial_{\bar{z}} \phoro{f} = 0.
	\end{equation}
Furthermore, following \cite{takahashi_tokuitenwo_2012}, we call a function $\phoro{f} : \Sigma \to \mathbb{C}'$ \emph{parameromorphic} if it is $\mathbb{C}'$-valued on an open dense subset of $\Sigma$ and for arbitrary $p \in \Sigma$, there exists a paraholomorphic function $\phoro{g}$ such that $\phoro{fg}$ is paraholomorphic near $p$.

\begin{rema}
	We note that the open dense subset above may not even be connected, a fact that is one of the many factors contributing to the difficulty of studying parameromorphic functions. However, in this paper, parameromorphic functions only appear as a part of Weierstrass data for the Weierstrass-type representation, where we require that a parameromorphic function $h$ must be accompanied by a paraholomorphic function $\eta$ so that $h^2\eta$ is paraholomorphic (see Fact \ref{fact:Weierstrass1}).
\end{rema}

We define a few elementary paracomplex analytic functions that are used in this paper here via analytically extending the real counterparts.
The exponential function $\phoro{e}^z$ is defined by
	\[
		\phoro{e}^z := \sum_{n = 0}^{\infty} \frac{z^n}{n!}
	\]
while the circular and hyperbolic functions are defined by
	\begin{equation}\label{eqn:cossin}
		\begin{gathered}
			\pcosh z := \sum_{n = 0}^{\infty} \frac{z^{2n}}{(2n)!},
				\quad
			\psinh z := \sum_{n = 0}^{\infty} \frac{z^{2n + 1}}{(2n + 1)!}, \\
			\pcos z := \sum_{n = 0}^{\infty} (-1)^n \frac{z^{2n}}{(2n)!},
				\quad
			\psin z := \sum_{n = 0}^{\infty} (-1)^n \frac{z^{2n + 1}}{(2n + 1)!}
		\end{gathered}
	\end{equation}
suggesting that we have the paracomplex version of Euler's formula
	\[
		\phoro{e}^{j z} = \pcosh z + j \,\psinh z
	\]
for any $z$. We also define the hyperbolic tangent and tangent functions by
	\begin{gather*}
		\ptanh z := \frac{\psinh z}{\pcosh z},
			\quad
		\ptan z := \frac{\psin z}{\pcos z}.
	\end{gather*}
Since these functions are the analytic continuations of the corresponding real hyperbolic tangent and tangent functions, $\ptanh{z}$ is defined on $\mathbb{C}'$ but $\ptan{z}$ is defined on $\{z\in\mathbb{C}' \mid |\Re{z}\pm \Im{z}|<\tfrac{\pi}{2} \}$.

\begin{rema}
	With the exception of $\ptan{z}$, the paracomplex-valued elementary functions defined above are paraholomorphic functions. $\ptan{z}$ is a parameromorphic function by our definition since $\ptan{z}\pcos{z} = \psin{z}$ is paraholomorphic.
\end{rema}

\begin{rema}
	We note here that the definitions of circular functions $\psin z$ and $\pcos z$ are different from those in \cite{konderak_weierstrass_2005}.
	In \cite{konderak_weierstrass_2005}, these functions were defined via the paracomplex exponential function and the paracomplex Euler's formula; in \eqref{eqn:cossin}, these functions are defined via analytic continuation from the real counterparts.
\end{rema}


\subsection{Timelike minimal surface theory}
Let $\mathbb{R}^{2,1}$ be the Minkowski $3$-space endowed with Lorentzian metric 
	\[
		\langle (\xi_1,\xi_2,\xi_0), (\zeta_1,\zeta_2,\zeta_0)\rangle:=\xi_1\zeta_1+\xi_2\zeta_2-\xi_0\zeta_0,
	\]
and let $\mathbb{R}^{1,1}$ denote the Minkowski $2$-plane endowed with Lorentzian metric 
	\[
		\langle (\xi_1, \xi_0), (\zeta_1, \zeta_0)\rangle_{1,1} := \xi_1\zeta_1 - \xi_0\zeta_0.
	\]
We identify the set of paracomplex numbers $\mathbb{C}'$ with Minkowski $2$-plane $\mathbb{R}^{1,1}$ via $x + j y \leftrightarrow (x, y)$, and we let $\Sigma$ denote a simply-connected domain with coordinates $(x,y)$ in $\mathbb{R}^{1,1}$.

Let $F : \Sigma \to \mathbb{R}^{2,1}$ be a timelike immersion.
As proved in \cite[p.13]{weinstein_introduction_1996}, there always exist null coordinates $(u,v)$ at each point on $\Sigma$.
Hence, Lorentz conformal coordinates $(x, y)$ also exist, by the relation
	\[
		(x,y)=\left(\frac{u+v}{2},\frac{u-v}{2}\right),
	\]
so that the induced metric $\dif s^2$ is represented as
	\begin{equation}\label{eqn:firstFundamental}
		\dif s^2 = \rho^2 (\dif x^2 - \dif y^2)  = \rho^2 \dif z \dif \bar{z} = \rho^2 \dif u \dif v
	\end{equation}
for some function $\rho : \Sigma \to \mathbb{R}_+$ with at least $C^2$-differentiability, where $\mathbb{R}_+$ is the set of positive real numbers. (Later, we will see that under the full assumptions of this paper, $\rho$ becomes analytic; see Proposition \ref{prop:solutionFG} and Theorem \ref{theo:wData}).
We choose the spacelike unit normal vector field $N : \Sigma \to \mathbb{S}^{1,1}$, where
	\[
		\mathbb{S}^{1,1} := \{\xi \in \mathbb{R}^{2,1} : \langle \xi, \xi \rangle = 1\}.
	\]

Timelike minimal surfaces inherit Lorentzian metric from the ambient space; hence, by using paracomplex analysis over the set of paracomplex numbers $\mathbb{C}'$, Konderak has shown that timelike minimal surfaces also admit a Weierstrass-type representation \cite{konderak_weierstrass_2005} (see also \cite{takahashi_tokuitenwo_2012, yasumoto_weierstrass-type_2018}):

\begin{fact}\label{fact:Weierstrass1}
	Any timelike minimal surface $F: \Sigma \subset \mathbb{C}' \to \mathbb{R}^{2,1}$ can be locally represented as
		\[
			F(x,y) = \Re\int(2h, 1 - h^2, -j(1 + h^2))\eta\,\dif z
		\]
	over a simply-connected domain $\Sigma$ on which $h$ is parameromorphic, while $\eta$ and $h^2\eta$ are paraholomorphic.
	Furthermore, the induced metric of the surface becomes
		\begin{equation}\label{eqn:wConformal}
			\dif s^2 = (1 + |h|^2)^2 |\eta|^2 (\dif x^2 - \dif y^2).
		\end{equation}
	We call $(h, \eta \dif z)$ the \emph{Weierstrass data} of the timelike minimal surface $F$.
\end{fact}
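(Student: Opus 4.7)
The plan is to adapt the classical Weierstrass derivation to the paracomplex setting, with paraholomorphic data playing the role of the holomorphic data in the Euclidean case. Working in the Lorentz conformal coordinates $(x,y)$ constructed just before the statement, where $\dif s^2 = \rho^2\,\dif z\,\dif \bar z$, I would first verify that for a conformal timelike immersion the mean curvature vector is a nonzero scalar multiple of $F_{xx}-F_{yy} = 4F_{z\bar z}$, so timelike minimality is equivalent to the paraharmonic equation $F_{z\bar z}=0$. It then follows that the $\mathbb{C}'^3$-valued derivative $\phi := F_z$ has paraholomorphic components, and since $\Sigma$ is simply connected, one recovers $F = 2\Re \int \phi\,\dif z$ up to an additive constant.

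Next I would translate conformality into two algebraic conditions on $\phi=(\phi_1,\phi_2,\phi_3)$: the null identity $\phi_1^2+\phi_2^2-\phi_3^2=0$ and the positivity $|\phi_1|^2+|\phi_2|^2-|\phi_3|^2>0$. Since $j^2=1$, one has the factorization $\phi_2^2-\phi_3^2=(\phi_2-j\phi_3)(\phi_2+j\phi_3)$, so the null condition reads $\phi_1^2=-(\phi_2-j\phi_3)(\phi_2+j\phi_3)$. Setting
\[
\eta := \phi_2 - j\phi_3, \qquad h := \phi_1/\eta,
\]
one checks directly that $2\phi = (2h,\,1-h^2,\,-j(1+h^2))\eta$, so that integrating yields the stated formula. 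Here $\eta$ is paraholomorphic as a $\mathbb{C}'$-linear combination of paraholomorphic components, $h^2\eta = -(\phi_2+j\phi_3)$ is paraholomorphic for the same reason, and $h$, being a ratio of paraholomorphic functions, is parameromorphic in the sense defined in the excerpt. Substituting this form of $\phi$ into the relation $2\langle \phi,\bar\phi\rangle = \rho^2$ and expanding, using $h^2\bar h^2=|h|^4$, produces the metric formula \eqref{eqn:wConformal}.

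The main obstacle, I expect, lies in handling the zero locus of $\phi_2 - j\phi_3$. Because $\mathbb{C}'$ contains nontrivial zero divisors, a paraholomorphic function can vanish along null lines without vanishing identically, so this locus is not isolated as in the complex-analytic case, and on it the naive definition of $h$ breaks down. Geometrically this is the ``pole'' of the stereographic-type projection of the Gauss map $N : \Sigma \to \mathbb{S}^{1,1}$ underlying the choice $h=\phi_1/\eta$; on a neighborhood of such a point one must swap the roles of $\phi_2\pm j\phi_3$, which corresponds to composing the projection with the involution $h \mapsto 1/h$. Verifying that these local parametrizations patch together to yield a single $h$ defined, and parameromorphic, on an open dense subset of $\Sigma$, while $\eta$ and $h^2\eta$ remain globally paraholomorphic, is the delicate point; it is precisely this phenomenon that forces the auxiliary hypothesis ``$h^2\eta$ paraholomorphic'' in the statement and accounts for $h$ being merely parameromorphic rather than paraholomorphic.
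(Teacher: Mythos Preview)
The paper does not prove this statement: it is recorded as a \emph{Fact} with attribution to Konderak (with further references to Takahashi and Yasumoto), and no argument is given in the text. Your outline is the standard derivation---paraharmonicity of $F$ in Lorentz conformal coordinates, paraholomorphicity of $\phi=F_z$, the isotropy condition $\phi_1^2+\phi_2^2-\phi_3^2=0$, and the factorization via $\eta=\phi_2-j\phi_3$, $h=\phi_1/\eta$---and it is correct as far as it goes; the metric computation also checks out. Your discussion of the zero-divisor issue is apt: this is exactly why the statement requires $h$ only parameromorphic while $\eta$ and $h^2\eta$ are paraholomorphic, and why the representation is asserted only locally. Since the paper offers nothing to compare against, there is no divergence in approach to report; you have simply supplied what the paper omits.
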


On the other hand, timelike minimal surfaces admit another representation based on null coordinates, found by McNertney \cite{mcnertney_one-parameter_1980}:
\begin{fact}\label{fact:Weierstrass2}
	Any timelike minimal surface $F$ can be locally written as the sum of two null curves $\alpha$ and $\beta$:
	\begin{equation}\label{eq:null_decomp}
		F(u,v) = \frac{\alpha(u)+\beta(v)}{2}.
	\end{equation}
	We call such $\alpha$ and $\beta$ the \emph{generating null curves} of $F$.
\end{fact}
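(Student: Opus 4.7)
The plan is to reduce the timelike minimality condition, in null coordinates, to the linear wave equation and then apply d'Alembert's formula. Near any point $p \in \Sigma$, null coordinates $(u,v)$ exist by the remark preceding Fact \ref{fact:Weierstrass1}, so that the induced metric takes the form $\dif s^2 = \rho^2\,\dif u\,\dif v$ as in \eqref{eqn:firstFundamental}. Equivalently, $\langle F_u, F_u\rangle = \langle F_v, F_v\rangle = 0$ and $\langle F_u, F_v\rangle = \rho^2/2$.

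Next, I would show that $F_{uv} = 0$ under these hypotheses. Differentiating $\langle F_u, F_u\rangle = 0$ with respect to $v$ gives $\langle F_{uv}, F_u\rangle = 0$, and similarly differentiating $\langle F_v, F_v\rangle = 0$ with respect to $u$ gives $\langle F_{uv}, F_v\rangle = 0$. Hence $F_{uv}$ is orthogonal to the tangent plane, so it is a multiple of the unit normal $N$. On the other hand, computing the mean curvature $H$ from the first fundamental form (with $E = G = 0$, $F_{\mathrm{met}} = \rho^2/2$) and the second fundamental form shows that $H$ is proportional to $\langle F_{uv}, N\rangle$. Thus $H \equiv 0$ forces $\langle F_{uv}, N\rangle = 0$, and combined with the tangential orthogonality above this yields $F_{uv} = 0$ throughout $\Sigma$.

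Having reduced the problem to the wave equation $F_{uv} = 0$, d'Alembert's formula on a simply-connected domain provides smooth $\mathbb{R}^{2,1}$-valued maps $\tilde{\alpha}(u)$ and $\tilde{\beta}(v)$ of one variable so that $F(u,v) = \tilde{\alpha}(u) + \tilde{\beta}(v)$. Setting $\alpha := 2\tilde{\alpha}$ and $\beta := 2\tilde{\beta}$ delivers the decomposition \eqref{eq:null_decomp}. To finish, I would verify that $\alpha$ and $\beta$ are null curves: substituting into $F_u = \alpha'(u)/2$ and $F_v = \beta'(v)/2$ and using the null-coordinate relations $\langle F_u, F_u\rangle = \langle F_v, F_v\rangle = 0$ yields $\langle \alpha'(u), \alpha'(u)\rangle = \langle \beta'(v), \beta'(v)\rangle = 0$ identically.

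The main obstacle is really the second step, namely the clean identification of timelike minimality with $F_{uv} = 0$ in null coordinates; once this is in hand, the remainder is essentially formal. Note that this also explains why $\alpha$ and $\beta$ are determined only up to additive constants (which may be redistributed between them without changing $F$), a mild ambiguity that will not affect the later use of Fact \ref{fact:Weierstrass2} since we only ever exploit $\alpha'$ and $\beta'$.
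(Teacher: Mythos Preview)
The paper does not supply a proof of Fact~\ref{fact:Weierstrass2}; it is stated as a fact and attributed to McNertney's thesis \cite{mcnertney_one-parameter_1980}. Your argument is the standard one and is correct: in null coordinates the timelike minimality condition reduces to the wave equation $F_{uv}=0$, whence d'Alembert's decomposition yields the two generating curves, and the null-coordinate relations $\langle F_u,F_u\rangle=\langle F_v,F_v\rangle=0$ together with the immersion condition ensure they are regular null curves.
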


\begin{rema}\label{rema:asso_family}
	Similar to the minimal surfaces and maximal surfaces cases, timelike minimal surfaces also admit associated families and conjugate timelike minimal surfaces:
	\begin{itemize}
		\item Given a Lorentz conformally parametrized timelike minimal surface $F$ with Weierstrass data $(h, \eta \dif z)$, we define $F^\varphi$ to be a member of the \emph{associated family of $F$} if $F^\varphi$ is given by the Weierstrass data $(h, \phoro{e}^{j \varphi} \eta \dif z)$ for some $\varphi \in \mathbb{R}$ (note that $\phoro{e}^{j \varphi} \in \mathbb{H}$, where $\mathbb{H} := \{z \in \mathbb{C}' : |z|^2 = 1 \}$).
		However, unlike the minimal surfaces and maximal surfaces cases, the \emph{conjugate timelike minimal surface} of a given timelike minimal surface is not in the associated family: the conjugate timelike minimal surface $F^*$ of $F$ is given by the Weierstrass data $(h, j \eta \dif z)$.
		\item Given a timelike minimal surface $F$ generated by null curves $\alpha(u)$ and $\beta(v)$, $F^\mu$ is a member of the associated family of $F$ if $F^\mu$ is generated by null curves $\mu\,\alpha(u)$ and $\frac{1}{\mu}\beta(v)$ for a fixed $\mu > 0$, while the conjugate timelike minimal surface of $F$ if $F^*$ is generated by null curves $\alpha(u)$ and $-\beta(v)$.
			We note that the parameters of the associated family $\varphi$ and $\mu$ are related by $e^{\varphi} = \mu$.
	\end{itemize}
\end{rema}

Following \cite{inoguchi_timelike_1998} (see also \cite{fujioka_timelike_2003, inoguchi_timelike_2004}), we define the \emph{Hopf pair} of $F$ as
	\[
		Q \dif u^2 := \langle F_{uu}, N \rangle \dif u^2, \quad R \dif v^2 := \langle F_{vv}, N \rangle \dif v^ 2
	\]
using the null coordinates $(u,v)$.
In terms of the Lorentz conformal coordinates, the \emph{Hopf differential} $\phoro{q} \dif z^2$ of $F$ can be defined from the Hopf pair of $F$ via
	\[
		\phoro{q}  \dif z^2 = Q \dif u^2 + R \dif v^2
	\]
for some paracomplex-valued function $\phoro{q}$ where $\phoro{q} = \frac{Q + R}{2} + j \frac{Q - R}{2}$.
We call a point $(x,y) \in \Sigma$ an \emph{umbilic point} of $F$ if $\phoro{q} = 0$ on $(x,y)$, and a \emph{quasi-umbilic point} of $F$ if $\phoro{q} \neq 0$ but $QR = 0$ on $(x,y)$ (see also \cite[Remark 4.3]{inoguchi_timelike_2004} or \cite[Definition 1.1]{clelland_totally_2012}).
Since the Gaussian curvature at umbilic and quasi-umbilic points vanishes, we call them \emph{flat points}.

Following \cite[Definition 3.1]{fujioka_timelike_2003} (see also \cite{fujioka_timelike_2008}), we say that $(x,y)$ are \emph{isothermic} (or conformal curvature line) coordinates of $F$ if $\phoro{q}$ is real on $\Sigma$; we say that $(x,y)$ are \emph{anti-isothermic} (or conformal asymptotic line) coordinates if $\phoro{q}$ is pure imaginary on $\Sigma$.
For a non-planar timelike minimal surface without flat points on $\Sigma$, it is known that there exist either isothermic or anti-isothermic coordinates $(x,y)$ \cite{fujioka_timelike_2003}.

\begin{rema}
	One can also characterize the existence of isothermic or anti-isothermic coordinates on any timelike minimal surface by examining the sign of the Gaussian curvature (see \cite[p.629]{magid_lorentzian_2005} or \cite[Theorem 3.4]{akamine_behavior_nodate}).
\end{rema}

Since we are interested in timelike minimal surfaces with planar curvature lines, we  assume that the mean curvature $H \equiv 0$ on the domain.
Furthermore, we require that $F$ is without flat points and has negative Gaussian curvature on its domain, so that $F$ admits isothermic coordinates.
Note that by doing this, we exclude the case when $F$ is a timelike plane as well.
Then an analogous result to \cite[Lemma 1.1]{bobenko_painleve_2000} for isothermic timelike surfaces implies that we may assume $\phoro{q} = -\frac{1}{2}$.
Calculating the Gauss-Weingarten equations then gives us
\begin{equation}\label{eqn:gaussW}
	\begin{cases}
		F_{xx} = F_{yy} = - N + \frac{\rho_x}{\rho} F_x + \frac{\rho_y}{\rho} F_y, \\
		F_{xy} = \frac{\rho_y}{\rho} F_x + \frac{\rho_x}{\rho} F_y, \\
		N_{x} = \frac{1}{\rho^2} F_x, \quad N_{y} = -\frac{1}{\rho^2} F_y,
	\end{cases}
\end{equation}
while the Gauss equation (or the integrability condition) becomes
\[
	\rho \cdot \Box\rho - ({\rho_x}^2 - {\rho_y}^2) -1 = 0,
\]
where $\Box := \partial_x^2 - \partial_y^2$ is the d'Alembert operator.

\subsection{Planar curvature line condition and the analytic classification}
We now calculate the condition the Lorentz conformal factor $\rho$ must satisfy for a timelike minimal surface $F$ to have planar curvature lines.
\begin{lemm}
For a timelike minimal surface with no flat points, the following statements are equivalent:
	\begin{enumerate}
		\item $x$-curvature lines are planar.
		\item $y$-curvature lines are planar.
		\item $\rho_{xy}= 0$.
	\end{enumerate}
\end{lemm}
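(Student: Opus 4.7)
The plan is to establish (1)$\Leftrightarrow$(3) by a direct Frenet-type computation on a fixed $x$-curvature line, and then to obtain (2)$\Leftrightarrow$(3) by the analogous argument with the roles of $x$ and $y$ interchanged.

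Fix $y_0$ and set $\gamma(x) := F(x, y_0)$. Since $\langle F_x, F_x\rangle = \rho^2 > 0$, this is a regular spacelike space curve in $\mathbb{R}^{2,1}$, and by the Gauss-Weingarten equations \eqref{eqn:gaussW} the $N$-component of $F_{xx}$ equals $-1$. Consequently, the bivector
\[
F_x \wedge F_{xx} \;=\; -\,F_x \wedge N \;+\; \tfrac{\rho_y}{\rho}\,F_x \wedge F_y
\]
has a nonzero $F_x \wedge N$-component and is therefore nowhere vanishing, so $\gamma$ admits a well-defined osculating plane at every point. It follows that $\gamma$ is planar if and only if $\gamma', \gamma'', \gamma'''$ are everywhere linearly dependent, and, on expressing each of these three vectors in the basis $(F_x, F_y, N)$ of $\mathbb{R}^{2,1}$, this is equivalent to the pointwise vanishing of the resulting $3 \times 3$ coefficient determinant.

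The main computational step is to expand $F_{xxx}$ in the basis $(F_x, F_y, N)$ using \eqref{eqn:gaussW} together with the identities $N_x = \rho^{-2} F_x$ and $F_{xy} = (\rho_y/\rho) F_x + (\rho_x/\rho) F_y$. Because the $F_x$-row of the determinant is $(1, 0, 0)$, only the $F_y$- and $N$-coefficients of $F_{xx}$ and $F_{xxx}$ contribute, and after simplification the several cross terms of the form $\rho_x \rho_y / \rho^2$ cancel, leaving the determinant equal to $\rho_{xy}/\rho$. Since $\rho > 0$, this vanishes identically if and only if $\rho_{xy} \equiv 0$, which proves (1)$\Leftrightarrow$(3). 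For (2)$\Leftrightarrow$(3) we repeat the argument verbatim on $\eta(y) := F(x_0, y)$, a regular timelike curve since $\langle F_y, F_y\rangle = -\rho^2 < 0$; exploiting the $x \leftrightarrow y$ symmetry of \eqref{eqn:gaussW} (with the sign flip $N_y = -\rho^{-2} F_y$), the corresponding determinant reduces to $-\rho_{xy}/\rho$, again equivalent to $\rho_{xy} = 0$.

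The main obstacle is purely the bookkeeping in the determinant expansion: one must track every contribution to $F_{xxx}$ to verify the clean collapse to $\rho_{xy}/\rho$. The geometric ingredient -- that a nondegenerate space curve in $\mathbb{R}^{2,1}$ is planar if and only if $\gamma', \gamma'', \gamma'''$ are everywhere linearly dependent -- is available here precisely because the nontrivial normalized Hopf differential $\phoro{q} = -\tfrac{1}{2}$ forces the coefficient of $N$ in $F_{xx}$ (and in $F_{yy}$) to be $-1$, guaranteeing that the osculating plane along each coordinate curve is well defined.
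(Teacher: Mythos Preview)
Your proposal is correct and follows essentially the same approach as the paper: the paper's proof computes $\det(F_x, F_{xx}, F_{xxx})$ and $\det(F_y, F_{yy}, F_{yyy})$ using the Gauss--Weingarten equations \eqref{eqn:gaussW}, which is precisely your strategy of expressing $\gamma', \gamma'', \gamma'''$ in the frame $(F_x, F_y, N)$ and reducing the resulting coefficient determinant to $\pm\rho_{xy}/\rho$. Your explicit verification that $F_x \wedge F_{xx} \neq 0$ (so the osculating plane is defined) and your remark that the Hopf normalization $\phoro{q} = -\tfrac{1}{2}$ is what guarantees this are welcome details the paper leaves implicit.
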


\begin{proof}
Similar to the proof of \cite[Lemma 2.1]{cho_deformation_2017} and \cite[Lemma 2.1]{cho_deformation_2018}, we show this by calculating $\det(F_x, F_{xx}, F_{xxx}) = 0$ and $\det(F_y, F_{yy}, F_{yyy}) = 0$ using \eqref{eqn:gaussW}. 
\end{proof}

Therefore, by finding solutions to the following system of partial differential equations, we may find all timelike minimal surfaces with planar curvature lines:
	\begin{subnumcases}{\label{eqn:pde}}
		\rho \cdot \Box\rho - ({\rho_x}^2 - {\rho_y}^2) -1 = 0\label{eqn:pde1} &\text{(timelike minimality condition),}\\
		\rho_{xy}= 0 \label{eqn:pde2} &\text{(planar curvature line condition)}.
	\end{subnumcases}
To solve the above system, we first reduce \eqref{eqn:pde} to a system of ordinary differential equations as in \cite[Theorem 2.1]{abresch_constant_1987}.

\begin{lemm}\label{lemm:solutionRho}
For a solution $\rho: \Sigma \to \mathbb{R}_+$ to \eqref{eqn:pde}, there exist real-valued functions $f(x)$ and $g(y)$ such that
	\begin{subnumcases}{\label{eqn:recover}}
		\rho_x = f(x) \label{eqn:recover1}, \\
		\rho_y = g(y) \label{eqn:recover2}.
	\end{subnumcases}
Then, $\rho$ can be written in terms of $f(x)$ and $g(y)$ as follows:
\begin{description}
	\item[Case (1)] If $\Box\rho$ is nowhere zero on $\Sigma$, then
			\begin{equation} \label{eqn:solutionRho1}
				\rho(x,y) = \frac{f(x)^2 - g(y)^2 + 1}{f_x(x) - g_y(y)},
			\end{equation}
		where $f(x)$ and $g(y)$ satisfy the following system of ordinary differential equations:
			\begin{subnumcases}{\label{eqn:ode}}
				(f_x(x))^2 = (c - d) f(x)^2 + c \label{eqn:ode1}\\
				f_{xx}(x)= (c - d) f(x) \label{eqn:ode2}\\
				(g_y(y))^2 = (c - d) g(y)^2 + d \label{eqn:ode3}\\
				g_{yy}(y)= (c - d) g(y) \label{eqn:ode4}
			\end{subnumcases}
		for real constants $c$ and $d$ such that $c^2 + d^2 \neq 0$.

	\item[Case (2)] If $\Box\rho \equiv 0$ on $\Sigma$, i.e.\ $\Box\rho$ is identically zero on $\Sigma$, then
		\begin{equation} \label{eqn:solutionRho2}
			\rho(x,y) = (\sinh \phi) \cdot x - (\cosh \phi) \cdot y
		\end{equation}
		where $f(x) = \sinh \phi$ and $g(y) = - \cosh \phi$ for some constant $\phi \in \mathbb{R}$.
	\end{description}
\end{lemm}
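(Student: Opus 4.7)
The plan is to reduce the PDE system to a system of separable ODEs by successively exploiting the two conditions. First, condition \eqref{eqn:pde2} says $\rho_{xy} = 0$, so $\rho_x$ is independent of $y$ and $\rho_y$ is independent of $x$; this immediately yields real-valued $f$ and $g$ with $\rho_x = f(x)$ and $\rho_y = g(y)$, and a separated decomposition $\rho(x,y) = F(x) + G(y)$ where $F' = f$ and $G' = g$. Substituting into the timelike minimality equation \eqref{eqn:pde1} produces the algebraic identity
\[
(F(x) + G(y))\bigl(f_x(x) - g_y(y)\bigr) = f(x)^2 - g(y)^2 + 1.
\]

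In Case (1), when $\Box\rho = f_x - g_y$ is nowhere zero, dividing by $f_x - g_y$ gives \eqref{eqn:solutionRho1} directly. To derive the ODE system, I would differentiate the identity above once in $x$ and once in $y$, which yields the two cross-relations
\[
(F + G)\, f_{xx} = f(f_x + g_y), \qquad (F + G)\, g_{yy} = g(f_x + g_y).
\]
Away from the zero locus of $f$ and $g$, these give $f_{xx}/f = g_{yy}/g$, which separates the variables and forces both sides to equal a common constant $k$; a continuity argument extends the ODEs $f_{xx} = k f$ and $g_{yy} = k g$ globally on $\Sigma$. Multiplying by $f_x$ and $g_y$ respectively and integrating produces \eqref{eqn:ode1} and \eqref{eqn:ode3} for some constants $c$ and $d$, i.e., $(f_x)^2 = k f^2 + c$ and $(g_y)^2 = k g^2 + d$. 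To pin down $k = c - d$, I would compute $(f_x)^2 - (g_y)^2 = k(f^2 - g^2) + (c - d)$ in two ways, the second using $f_x + g_y = k(F + G) = k(f^2 - g^2 + 1)/(f_x - g_y)$ obtained from the cross-relation combined with the formula for $\rho$. Finally, $c^2 + d^2 \neq 0$ holds because $c = d = 0$ would force $f_x \equiv g_y \equiv 0$, contradicting $\Box\rho \not\equiv 0$.

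In Case (2), $\Box\rho \equiv 0$ means $f_x(x) = g_y(y)$ for all $(x,y)$, so both must equal a common constant $a$. Hence $f$ and $g$ are affine in their variables, and substituting back into \eqref{eqn:pde1} reduces it to $g(y)^2 - f(x)^2 \equiv 1$; expanding in $x$ and $y$ forces $a = 0$, so $f$ and $g$ are constants satisfying $g^2 - f^2 = 1$. Parametrizing this hyperbolic relation by $f = \sinh\phi$ and $g = -\cosh\phi$, integrating $\rho_x = f$ and $\rho_y = g$, and absorbing the constant of integration via a translation of the $(x,y)$ coordinates yields \eqref{eqn:solutionRho2}.

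The principal obstacle I anticipate is the bookkeeping needed to establish the three-constant relation $k = c - d$ in Case (1): each of the ODEs for $f$ and $g$ introduces its own integration constant independently, and reconciling them with the single coupling constant $k$ requires carefully combining the cross-relations with the original algebraic identity. Secondary care is needed near the zero loci of $f$ and $g$ when dividing, but since $\rho$, $f$, and $g$ are smooth, a standard open-density and continuity argument should globalize the ODEs over all of $\Sigma$.
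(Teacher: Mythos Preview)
Your proposal is correct and follows essentially the same approach as the paper, which itself only sketches the argument and defers to \cite{cho_deformation_2017, cho_deformation_2018}: separate variables via $\rho_{xy}=0$, substitute into the minimality equation to get the algebraic identity, differentiate to obtain the cross-relations and the common constant $k$, and then identify $k=c-d$ by comparing the two expressions for $(f_x)^2-(g_y)^2$. Your handling of Case~(2) and of the nondegeneracy $c^2+d^2\neq 0$ is also in line with the paper's outline.
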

\begin{proof}
Arguments for the proof of this lemma mirror those in the proof of \cite[Lemma 2.2]{cho_deformation_2017} and \cite[Lemma 2.2]{cho_deformation_2018}: we here only give an outline of the proof.

First, \eqref{eqn:recover} can be shown directly from \eqref{eqn:pde2}. Now if we assume that $\Box\rho$ is not identically equal to zero, then there is a point $(x_0, y_0)$ such that $\Box\rho(x_0, y_0) \neq 0$, suggesting that we can choose a neighborhood $\Sigma \subset \mathbb{R}^{1,1}$ of $(x_0, y_0)$ such that $\Box\rho$ is nowhere zero on $\Sigma$. On such $\Sigma$, we can directly show the remaining claims using \eqref{eqn:pde1} and \eqref{eqn:recover}.

On the other hand, if $\Box\rho$ is identically equal to zero on some simply-connected domain $\Sigma$, then \eqref{eqn:pde1} and \eqref{eqn:recover} imply that ${\rho_x}^2 - {\rho_y}^2 = f(x)^2 - g(y)^2 =  -1$; hence, $f(x)$ and $g(y)$ are constant functions.
\end{proof}

We now solve \eqref{eqn:ode} by first obtaining a general solution, and then finding an appropriate initial condition to get an explicit solution for $f(x)$ and $g(y)$.
First, if $c = d$, then \eqref{eqn:ode1} and \eqref{eqn:ode3} imply that $c = d > 0$, and using \eqref{eqn:ode2} and \eqref{eqn:ode4}, we may obtain the explicit solutions:
	\begin{equation}\label{eqn:cequalsd}
		f(x) = \pm \sqrt{c}\,x + \tilde{C}_1\quad\text{and}\quad g(y) = \pm \sqrt{d}\,y + \tilde{C}_2
	\end{equation}
for some real constants of integration $\tilde{C}_1$ and $\tilde{C}_2$.

Now, assuming that $c \neq d$, we can explicitly solve for $f(x)$ and $g(y)$ to find that
	\begin{equation}\label{eqn:cneqd}
		\begin{aligned}
		f(x) &= C_1 e^{\sqrt{c - d}\,x} + C_2 e^{-\sqrt{c - d}\,x}, \quad 4(d - c) C_1 C_2 = c,\\
		g(y) &= C_3 e^{\sqrt{c - d}\,y} + C_4 e^{-\sqrt{c - d}\,y}, \quad 4(d - c) C_3 C_4 = d,
		\end{aligned}
	\end{equation}
where $C_1, \ldots, C_4 \in \mathbb{C}$ are constants of integration.
Furthermore, since $f(x)$ and $g(y)$ are real-valued functions, $C_1, \ldots, C_4$ must satisfy
	\begin{equation}\label{eqn:constantsIntegration}
		\begin{cases}
		C_1, C_2, C_3, C_4 \in \mathbb{R}, &\text{if }c > d,\\
		C_1 = \overline{C_2} \text{ and } C_3 = \overline{C_4}, &\text{if }d > c,
		\end{cases}
	\end{equation}
where $\bar{\cdot}$ denotes the usual complex conjugation.

In the following series of lemmata, we identify the correct initial conditions based on the values of $c$ and $d$.

\begin{lemm}[cf.\ Lemma 2.3 of \cite{cho_deformation_2018}]\label{lemm:zero}
	$f(x)$ \resp{$g(y)$} satisfying \eqref{eqn:ode} has a zero if and only if either $c > 0$ or $f(x) \equiv 0$ \resp{$d > 0$ or $g(y) \equiv 0$}.
\end{lemm}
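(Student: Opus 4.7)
The plan is to handle the two implications separately; the forward direction is a one-line ODE argument, while the reverse direction amounts to a case analysis on the explicit forms of $f(x)$ derived in \eqref{eqn:cequalsd} and \eqref{eqn:cneqd}. Since the statement for $g(y)$ is entirely symmetric, I treat only $f(x)$.

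For the forward direction, suppose $f(x_0) = 0$ for some $x_0 \in \mathbb{R}$. Substituting into \eqref{eqn:ode1} gives $(f_x(x_0))^2 = c$, so $c \geq 0$ is forced immediately. If $c > 0$ we are done, so assume $c = 0$; then $f_x(x_0) = 0$ as well, and applying standard uniqueness for the linear second-order ODE \eqref{eqn:ode2} with vanishing initial data at $x_0$ yields $f \equiv 0$.

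For the reverse direction, the case $f \equiv 0$ is trivial, so assume $c > 0$. I split according to the sign of $c - d$:
\begin{itemize}
  \item If $c = d > 0$, then \eqref{eqn:cequalsd} gives $f(x) = \pm\sqrt{c}\,x + \tilde{C}_1$, which vanishes at $x = \mp\tilde{C}_1/\sqrt{c}$.
  \item If $c > d$, then \eqref{eqn:cneqd} together with \eqref{eqn:constantsIntegration} gives real constants $C_1, C_2$ with $4(d-c)C_1 C_2 = c > 0$; since $d - c < 0$, this forces $C_1 C_2 < 0$. Solving $f(x) = 0$ reduces to $e^{2\sqrt{c-d}\,x} = -C_2/C_1 > 0$, which always admits a real solution.
  \item If $c < d$, then $\sqrt{c-d}$ is imaginary and \eqref{eqn:constantsIntegration} gives $C_2 = \overline{C_1}$, so $f(x) = 2\,\Re(C_1 e^{i\sqrt{d-c}\,x})$ is a genuine sinusoid; the constraint $4(d-c)|C_1|^2 = c > 0$ ensures $C_1 \neq 0$, so $f$ has infinitely many zeros.
\end{itemize}

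There is no real obstacle here; the only thing to be careful about is keeping the sign conventions in \eqref{eqn:cneqd}--\eqref{eqn:constantsIntegration} straight across the three subcases, and ensuring that the amplitude nondegeneracy in the oscillatory subcase is genuinely equivalent to $c > 0$. The argument is entirely parallel to \cite[Lemma 2.3]{cho_deformation_2018}, differing only in the explicit exponents that arise from the Lorentzian (rather than Riemannian) d'Alembertian in \eqref{eqn:pde1}.
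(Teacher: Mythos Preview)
Your proof is correct and follows the natural approach: the forward direction is immediate from \eqref{eqn:ode1} combined with the uniqueness theorem for the linear ODE \eqref{eqn:ode2}, and the reverse direction is a straightforward case analysis on the explicit general solutions \eqref{eqn:cequalsd}--\eqref{eqn:cneqd}. The paper itself omits the proof entirely and simply cites the parallel Lemma~2.3 of \cite{cho_deformation_2018}; your argument is precisely the expected translation of that lemma to the present Lorentzian setting.
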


\begin{lemm}[cf.\ Lemma 2.4 of \cite{cho_deformation_2018}]
	$f(x)$ \resp{$g(y)$} satisfying \eqref{eqn:ode} has no zero if and only if either $c < 0$ or $f(x) = \pm e^{\sqrt{-d} x}$, where $d < 0$ \resp{$d < 0$ or $g(y) = \pm e^{\sqrt{c} y}$ where $c > 0$}.
\end{lemm}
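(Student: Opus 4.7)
The plan is to prove this by dichotomy, using the preceding lemma (cf.\ Lemma 2.3) as the crucial input: since it characterizes when $f$ has a zero, the contrapositive gives that $f$ has no zero if and only if $c \le 0$ and $f \not\equiv 0$. Thus the task reduces to analyzing the two remaining cases $c < 0$ and $c = 0$ under the equation \eqref{eqn:ode1}, and checking that they are precisely the two alternatives in the statement.

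For the case $c < 0$, the argument is immediate: evaluating \eqref{eqn:ode1} at any hypothetical zero $x_0$ of $f$ yields $(f_x(x_0))^2 = c < 0$, which is impossible, so $f$ automatically has no zero. Conversely, if $c < 0$, this branch of the statement is already fulfilled, and nothing further is required.

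The case $c = 0$ is the substantive one. Since $c^2 + d^2 \neq 0$ is assumed in Lemma \ref{lemm:solutionRho}, we have $d \neq 0$, and \eqref{eqn:ode1} reduces to $(f_x)^2 = -d f^2$. If $d > 0$, the right-hand side is nonpositive, forcing $f \equiv 0$ (which is explicitly excluded by the hypothesis that $f$ has no zero), so the case $d > 0$ cannot occur. If $d < 0$, then $f_x = \pm \sqrt{-d}\,f$, which integrates to $f(x) = A \,\phoro{e}^{\pm\sqrt{-d}\,x}$ for some real constant $A$; the requirement that $f$ have no zero forces $A \neq 0$.

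The last step is to normalize this expression to the stated form $f(x) = \pm \phoro{e}^{\sqrt{-d}\,x}$. This is done by exploiting the residual freedom in the Lorentz conformal coordinate $x$ (translations and the reflection $x \mapsto -x$ both preserve the form of \eqref{eqn:ode}): the sign in the exponent can be absorbed by $x \mapsto -x$, and writing $|A| = \phoro{e}^{\sqrt{-d}\,x_0}$ lets us translate $x$ by $-x_0$ to reduce $|A|$ to $1$, while the sign of $A$ survives as the $\pm$ in the stated normal form. The symmetric argument for $g(y)$ is identical with $(c,d)$ and $(x,y)$ interchanged. The main (and only mild) obstacle is this coordinate-normalization step; the ODE analysis itself is routine once the previous lemma fixes the allowed signs of $c$ and $d$.
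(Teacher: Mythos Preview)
Your proof is correct, and the paper itself does not spell out an argument here, deferring instead to the cited reference; your approach---taking the contrapositive of the preceding lemma to reduce to $c\le 0$ with $f\not\equiv 0$, then integrating \eqref{eqn:ode1} directly in the boundary case $c=0$---is the natural one and is what the paper intends.

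One small remark: your normalization step (absorbing the integration constant $A$ and the sign of the exponent via a translation and reflection in $x$) is not literally part of the ``if and only if'' as stated, since the lemma makes no mention of coordinate freedom. But this is exactly how the paper operates in the surrounding discussion (the shifts introduced before Table~\ref{table:sheets} and Proposition~\ref{prop:solutionFG}), so your reading matches the intended convention. A minor notational point: since $x$ and $f$ are real here, the ordinary exponential $e$ suffices; the typewriter $\phoro{e}$ is reserved in the paper for the paracomplex extension.
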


Therefore, we can conclude the following about the nature of $f(x)$ and $g(y)$ depending on the values of $c$ and $d$:
	\begin{equation}\label{eqn:16cases}
	\begin{array}{l || l}
		\begin{aligned}
			c > 0 &: f\text{ has a zero} \\
			c = 0 &: \begin{cases} f \equiv 0 &(c = 0_0)\\ f = \pm e^{\sqrt{-d}\,x}, d < 0 &(c = 0_e)  \end{cases} \\
			c < 0 &: f\text{ has no zero} 
		\end{aligned}
		\quad
		&
		\quad
		\begin{aligned}
			d > 0 &: g\text{ has a zero}\\
			d = 0&: \begin{cases} g \equiv 0 &(d = 0_0)\\ g = \pm e^{\sqrt{c}\,y}, c > 0 &(d = 0_e)  \end{cases} \\
			d < 0 &: g\text{ has no zero}.
		\end{aligned}
	\end{array}
	\end{equation}
For the cases where $f$ or $g$ have no zero, we use the following lemmata to identify a possible initial condition.
\begin{lemm}\label{lemm:gOne}
	Suppose that $g(y) \not\equiv 0$, i.e.\ $g(y)$ is not identically equal to $0$. Then there is some $y_0$ such that $g(y_0)^2 = 1$ if and only if $c \geq 0$.
\end{lemm}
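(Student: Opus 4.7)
The plan is to exploit the first integral inherent in the ODE system \eqref{eqn:ode}, namely the identity $(g_y)^2 = (c-d)g^2 + d$ supplied directly by \eqref{eqn:ode3}, together with the explicit forms of $g$ given in \eqref{eqn:cequalsd} and \eqref{eqn:cneqd}.

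The forward direction is immediate: if $g(y_0)^2 = 1$, then evaluating \eqref{eqn:ode3} at $y_0$ yields $(g_y(y_0))^2 = (c-d) + d = c$, which forces $c \geq 0$.

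For the converse, assuming $c \geq 0$ and $g \not\equiv 0$, I would perform a short case analysis on the sign of $c-d$, in each case describing the range of $g$ and checking that it meets $\pm 1$ exactly when $c \geq 0$. When $c = d$, the condition $c^2 + d^2 \neq 0$ combined with $c \geq 0$ forces $c = d > 0$, so by \eqref{eqn:cequalsd} the function $g$ is a non-constant affine function and hence surjects onto $\mathbb{R}$. When $c > d$, the constraint $4(d-c)C_3 C_4 = d$ from \eqref{eqn:cneqd} implies that either $g$ is strictly monotonic and unbounded (if $d \geq 0$, so the two coefficients have opposite signs or one vanishes), or $g$ attains a positive minimum of $|g| = \sqrt{d/(d-c)}$ whose square is at most $1$ precisely when $c \geq 0$ (if $d < 0$, so the two coefficients have the same sign). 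When $c < d$, the reality conditions \eqref{eqn:constantsIntegration} force $g$ to be sinusoidal with amplitude $\sqrt{d/(d-c)}$; the hypothesis $g \not\equiv 0$ forces $d > 0$, and this amplitude is at least $1$ precisely when $c \geq 0$. In each case the intermediate value theorem then produces the desired $y_0$.

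The main obstacle is the bookkeeping across the overlap in sub-cases, particularly the delicate use of the product constraint $4(d-c)C_3 C_4 = d$ to decide whether $g$ behaves monotonically with full range $\mathbb{R}$, has an isolated positive minimum and escapes to infinity, or oscillates with a fixed amplitude. The overall flow closely parallels that of Lemma 2.4 in \cite{cho_deformation_2018}, so I would keep the exposition terse by referring to the earlier argument wherever the computation is identical.
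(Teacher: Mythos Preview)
Your proposal is correct and follows essentially the same route as the paper: both use \eqref{eqn:ode3} for the forward direction and then split the converse into the cases $c=d$, $c>d$, and $c<d$, invoking the explicit forms \eqref{eqn:cequalsd}--\eqref{eqn:cneqd}. The only cosmetic difference is that in the $c>d$ case the paper simply exhibits an explicit $y_0$ (coming from solving the quadratic $C_3 t^2 \mp t + C_4 = 0$ in $t=e^{\sqrt{c-d}\,y}$, whose discriminant is $c/(c-d)\ge 0$), whereas you subdivide further on the sign of $d$ and appeal to the intermediate value theorem; both arguments are fine.
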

\begin{proof}
	If $c = d$, then the statement is a direct result of \eqref{eqn:cequalsd}; therefore, assume that $c \neq d$.
	To show one direction, assume that there is some $y_0$ such that $g(y_0)^2 = 1$.
	Then \eqref{eqn:ode3} implies that $c \geq 0$.

	Now assume that $c \geq 0$. If $c > d$, then for
		\[
			y_0 := \frac{\log\left(\frac{\sqrt{c - d} + \sqrt{c}}{2|C_3|\sqrt{c - d}}\right)}{\sqrt{c - d}},
		\]
	we have that $g(y_0)^2 = 1$ via \eqref{eqn:cneqd}.
	
	If $c < d$, then from \eqref{eqn:constantsIntegration}, we have that $\overline{C_3}C_3 = \frac{d}{4(d - c)}$, implying that we may write $C_3 = \sqrt{\tfrac{d}{4(d-c)}}e^{i\Theta}$ and $C_4 = \sqrt{\tfrac{d}{4(d-c)}}e^{-i\Theta}$
	for some $\Theta \in \mathbb{R}$. Therefore, by \eqref{eqn:cneqd}, we have that
		\[
			g(y) = \sqrt{\tfrac{d}{d - c}} \cos \left(\sqrt{d - c} \, y + \Theta\right).
		\]
	Since we have $d > c \geq 0$, we have that $\sqrt{\tfrac{d}{d - c}} > 1$; therefore, there is some $y_0$ such that $g(y_0)^2 = 1$.
\end{proof}

\begin{lemm}
	Suppose that $f(x) \not\equiv 0$. Then there is some $x_0$ such that $f(x_0)^2 = 1$ if and only if $2c \geq d$.
\end{lemm}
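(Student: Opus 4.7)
The proof will mirror that of Lemma 2.9 closely, exploiting the same machinery (the explicit ODE solutions in \eqref{eqn:cequalsd}--\eqref{eqn:constantsIntegration}), with the threshold $c \geq 0$ replaced by $2c \geq d$ owing to the asymmetry of the right-hand sides of \eqref{eqn:ode1} and \eqref{eqn:ode3}.

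The forward direction is immediate: substituting $f(x_0)^2 = 1$ into \eqref{eqn:ode1} yields $(f_x(x_0))^2 = (c - d) + c = 2c - d$, and nonnegativity of the left-hand side forces $2c \geq d$.

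For the converse, assuming $2c \geq d$ and $f \not\equiv 0$, I would split into the case $c = d$ and the case $c \neq d$. In the case $c = d$, \eqref{eqn:ode1} forces $c = d \geq 0$; combined with $c^2 + d^2 \neq 0$ this gives $c = d > 0$, so the explicit solution \eqref{eqn:cequalsd} is a non-constant affine function $f(x) = \pm\sqrt{c}\,x + \tilde{C}_1$ that clearly attains the value $\pm 1$. In the case $c \neq d$, I would use the explicit form \eqref{eqn:cneqd} together with the reality constraints \eqref{eqn:constantsIntegration} to determine the range of $f^2$, and then verify that $1$ lies in this range precisely when $2c \geq d$. Concretely: if $c > d$ and $c \geq 0$, the coefficients $C_1, C_2$ (or the degenerate forms noted in \eqref{eqn:16cases}) make $f^2$ unbounded above with a zero or a positive limit tending to $0$, so $f^2 = 1$ is achieved by the intermediate value theorem; if $c > d$ and $c < 0$, both $C_1, C_2$ are nonzero of the same sign, so $f^2$ attains its minimum $\frac{c}{d-c}$, and a direct computation shows $\frac{c}{d-c} \leq 1$ is equivalent to $2c \geq d$; if $d > c$, the inequality $2c \geq d > c$ forces $c > 0$, and the sinusoidal form $f(x) = \sqrt{\tfrac{c}{d-c}}\cos(\sqrt{d - c}\,x + \Theta)$ (arising exactly as in the proof of Lemma~\ref{lemm:gOne}) attains its maximum $\sqrt{\tfrac{c}{d-c}}$, which is $\geq 1$ iff $2c \geq d$.

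The main obstacle, as in Lemma \ref{lemm:gOne}, is the bookkeeping of the several subcases coming from the signs of $c - d$ and $c$; no new conceptual difficulty arises, but one must carefully verify that the condition $2c \geq d$ threads through each subcase to give precisely the required extremal comparison. Everything else is routine algebra on the closed-form solutions already established.
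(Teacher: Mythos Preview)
Your proposal is correct and follows essentially the same approach as the paper, which simply states that the proof is similar to that of Lemma~\ref{lemm:gOne}. The only minor stylistic difference is that in the $c>d$ subcase the template proof of Lemma~\ref{lemm:gOne} exhibits an explicit point (here the analogue would be $x_0=\frac{1}{\sqrt{c-d}}\log\bigl(\frac{\sqrt{c-d}+\sqrt{2c-d}}{2|C_1|\sqrt{c-d}}\bigr)$, valid for all $c>d$ with $2c\geq d$, including $c<0$), whereas you argue via the range of $f^2$ and the intermediate value theorem; both routes are straightforward and yield the same conclusion.
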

\begin{proof}
	The proof is similar to that of Lemma \ref{lemm:gOne}.
\end{proof}

\begin{lemm}
	If $c < 0$ and $d < 0$, then there is some $x_0$ \resp{$y_0$} such that $f_x(x_0) = 0$ \resp{$g_y(y_0) = 0$}.
\end{lemm}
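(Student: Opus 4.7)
My plan is to use the explicit formulas \eqref{eqn:cneqd} together with the reality constraint \eqref{eqn:constantsIntegration} to pin down the sign structure of the integration constants $C_1, C_2$ (and $C_3, C_4$). The key observation is that, under the assumption $c < 0$ and $d < 0$, the only consistent subcase is $c > d$, and in that subcase the product $C_1 C_2$ is forced to be strictly positive, so the right-hand side of \eqref{eqn:cneqd} is essentially a scaled and translated $\pcosh$ in $x$ and thus has an obvious critical point.

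First I would dispatch the two degenerate subcases. The case $c = d$ is immediately ruled out because \eqref{eqn:ode1} would give $(f_x)^2 = c < 0$. For the case $c < d$, I would appeal to \eqref{eqn:constantsIntegration}, which forces $C_1 = \overline{C_2}$ so that $4(d-c)|C_1|^2 = c$; but here the left-hand side is non-negative while the right-hand side is strictly negative, a contradiction. Hence necessarily $c > d$, and $\sqrt{c-d}$ is a positive real.

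In the remaining case, the relation $4(d-c)C_1 C_2 = c$ equates two negative quantities, so $C_1 C_2 > 0$; in particular $C_1$ and $C_2$ are both nonzero and share a common sign. Solving $f_x(x_0) = 0$ from \eqref{eqn:cneqd} then produces an honest real root, namely $x_0 = \tfrac{1}{2\sqrt{c-d}}\log(C_2/C_1)$. The argument for $g_y$ is word-for-word analogous, using \eqref{eqn:ode3}, \eqref{eqn:ode4} and the second line of \eqref{eqn:cneqd}. I do not anticipate a genuine obstacle here; the only subtlety worth flagging is that ruling out the subcase $c < d$ requires invoking the reality condition \eqref{eqn:constantsIntegration}, and cannot be seen from the ODE system \eqref{eqn:ode} alone.
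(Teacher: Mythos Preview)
Your argument is correct and follows essentially the same path as the paper: establish $c > d$, then read off the critical point from \eqref{eqn:cneqd} (the paper's formula $x_0 = \tfrac{1}{2\sqrt{c-d}}\log\!\big(\tfrac{-c}{4(c-d)C_1^2}\big)$ is just your $\tfrac{1}{2\sqrt{c-d}}\log(C_2/C_1)$ rewritten via the constraint $4(d-c)C_1C_2=c$). One correction to your closing remark, though: the paper rules out $c \leq d$ directly from \eqref{eqn:ode1} alone, since $c - d \leq 0$ would give $(c-d)f^2 + c \leq c < 0$, contradicting $(f_x)^2 \geq 0$; so \eqref{eqn:constantsIntegration} is not actually needed for this step.
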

\begin{proof}
	From \eqref{eqn:ode1} and $c < 0$, we deduce that $c - d > 0$.
	Therefore, if
		\[
			x_0 := \frac{\log\left(\frac{-c}{4(c - d) C_1^2}\right)}{2 \sqrt{c - d}},
		\]
	then $f_x(x_0) = 0$ by \eqref{eqn:cneqd}.
	The statement for $g(y)$ is proven similarly.
\end{proof}

Therefore, of the possible 16 cases coming from \eqref{eqn:16cases}, we only need to consider the 8 cases specified in Table \ref{table:sheets} with their respective initial conditions. Note that we shift the parameters $x$ and $y$ to assume without loss of generality that $x_0 = y_0 = 0$.
\begin{table}
	\begin{tabular}{@{} lll @{}}
		\toprule
			 & Initial condition & Applicable values of $(c, d)$\\
		\midrule
			Case (1a) & $f(0) = 0$, $g(0) = 0$ & $(+, +)$, $(+, 0_0)$, $(0_0, +)$ \\
			Case (1b) & $f(0) = 0$, $g(0) = \pm 1$ & $(+, 0_e)$, $(+, -)$, $(0_0, -)$\\
			Case (1c) & $f(0) = \pm 1$, $g(0) = \pm 1$ & $(0_e, -)$\\
			Case (1d) & $f_x(0) = 0$, $g_y(0) = 0$ & $(-, -)$ \\
		\bottomrule
	\end{tabular}
	\caption{Choice of initial conditions and the corresponding applicable cases.}
	\label{table:sheets}
\end{table}

By using these initial conditions to solve \eqref{eqn:ode}, we obtain the following set of explicit solutions for $f$ and $g$.
\begin{prop}\label{prop:solutionFG}
	For a non-planar generalized timelike minimal surface with planar curvature lines $F(x,y)$, the real-analytic solution $\rho: \mathbb{R}^{1,1} \to \mathbb{R}$ of \eqref{eqn:pde} is precisely given as follows:
	\begin{description}
		\item[Case (1)] Let $\Box \rho \not\equiv 0$, i.e.\ $\Box \rho$ is not identically equal to zero. Then,
			\[
				\rho(x,y) = \frac{f(x)^2 - g(y)^2 + 1}{f_x(x) - g_y(y)},
			\]
		where $f(x)$ and $g(y)$ are given as follows (see Table \ref{table:sheets}):
		\begin{description}
			\item[Case (1a)] For $c \geq 0$ and $d \geq 0$ such that $c^2 + d^2 \neq 0$,
				\begin{align*}
					f(x) &=
						\begin{cases}
            						\sqrt{\frac{c}{c - d}} \sinh{(\sqrt{c - d}\, x)}, &\text{if }c \neq d,\\
							\sqrt{c}\, x, &\text{if } c = d,
						\end{cases}\\
	            			g(y) &=
						\begin{cases}
							-\sqrt{\tfrac{d}{c - d}} \sinh{(\sqrt{c - d}\,y)}, &\text{if } c \neq d,\\
							- \sqrt{d} \, y, &\text{if } c = d.
						\end{cases}
				\end{align*}
			\item[Case (1b)] For $c \geq 0$ and $d \in \mathbb{R}$ such that $c^2 + d^2 \neq 0$,
				\begin{align*}
					f(x) &= \sqrt{\tfrac{c}{c - d}} \sinh{(\sqrt{c - d}\, x)},\\
	            			g(y) &= -\cosh{(\sqrt{c - d}\, y)}-\sqrt{\tfrac{c}{c - d}} \sinh{(\sqrt{c - d}\,y)}.
				\end{align*}
			\item[Case (1c)] For $c \geq 0$ and $d < 2c$,
				\begin{align*}
					f(x) &= \cosh{(\sqrt{c - d}\, y)} + \sqrt{\tfrac{2c - d}{c - d}} \sinh{(\sqrt{c - d}\,y)},\\
	            			g(y) &= -\cosh{(\sqrt{c - d}\, y)}-\sqrt{\tfrac{c}{c - d}} \sinh{(\sqrt{c - d}\,y)}.
				\end{align*}
			\item[Case (1d)] For $d< c < 0$,
				\begin{align*}
					f(x) &= \sqrt{\tfrac{c}{d - c}} \cosh{(\sqrt{c - d}\, x)},\\
	            			g(y) &= -\sqrt{\tfrac{d}{d - c}} \cosh{(\sqrt{c - d}\, y)}.
				\end{align*}
		\end{description}
		\item[Case (2)] If $\Box \rho \equiv 0$, then for some constant $\phi$ such that $\phi \in \mathbb{R}$,
			\[
				\rho(x,y) = (\sinh \phi) \cdot x - (\cosh \phi) \cdot y.
			\]
\end{description}
\end{prop}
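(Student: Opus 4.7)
The plan is to assemble the statement from the preceding lemmata. By Lemma \ref{lemm:solutionRho}, any real-analytic solution $\rho$ of \eqref{eqn:pde} either has the form \eqref{eqn:solutionRho1} with $f,g$ satisfying the ODE system \eqref{eqn:ode} (Case (1)), or has the form \eqref{eqn:solutionRho2} (Case (2)). Case (2) needs no further work, so the task reduces to solving \eqref{eqn:ode} case by case using the four initial conditions of Table \ref{table:sheets}, and then verifying that the resulting pairs $(f,g)$ are exactly those listed in Cases (1a)--(1d).

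First I would dispose of the degenerate subcase $c=d$: from \eqref{eqn:ode1}, \eqref{eqn:ode3} we get $f_x^2=c$ and $g_y^2=d$, forcing $c=d\ge 0$, and integration with $f(0)=g(0)=0$ yields the linear expressions $f(x)=\sqrt{c}\,x$, $g(y)=-\sqrt{d}\,y$ appearing in Case (1a). Then I would treat $c\neq d$ by starting from the general solution \eqref{eqn:cneqd} and translating the initial condition of each row of Table \ref{table:sheets} into conditions on the integration constants $C_1,\dots,C_4$ (subject to the reality constraints \eqref{eqn:constantsIntegration} and the algebraic constraints $4(d-c)C_1C_2=c$, $4(d-c)C_3C_4=d$). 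After the translations $x\mapsto x-x_0$, $y\mapsto y-y_0$ assumed in the table, one finds:
\begin{itemize}
\item For Case (1a) ($f(0)=g(0)=0$), the algebraic constraints force $C_2=-C_1$ and $C_4=-C_3$ with $-4(d-c)C_1^2=c$ and $-4(d-c)C_3^2=d$, giving the hyperbolic sines with the prefactors $\sqrt{c/(c-d)}$ and $-\sqrt{d/(c-d)}$.
\item For Case (1b) ($f(0)=0$, $g(0)=\pm 1$), $f$ is as in (1a), while $g$ satisfies $C_3+C_4=\pm 1$ together with the algebraic constraint, and rearranging yields $g(y)=-\cosh(\sqrt{c-d}\,y)-\sqrt{c/(c-d)}\sinh(\sqrt{c-d}\,y)$ (the sign is fixed by matching with the applicable cases in Table \ref{table:sheets}).
\item For Case (1c) ($f(0)=\pm 1$, $g(0)=\pm 1$), the same $\cosh+\sinh$ manipulation applied to both $f$ and $g$ produces the stated formulas, with the prefactor $\sqrt{(2c-d)/(c-d)}$ arising from $(f_x(0))^2=c$ and $f(0)^2=1$ together with \eqref{eqn:ode1}.
\item For Case (1d) ($f_x(0)=g_y(0)=0$), one gets $C_2=C_1$ and $C_4=C_3$; the algebraic constraints, now with $c-d<0$ impossible, force $c-d>0$ and reveal $f(x)$ and $g(y)$ as the stated hyperbolic cosines.
\end{itemize}
For each case I would then check the permissible values of $(c,d)$ listed in Table \ref{table:sheets} against the preceding four lemmata (on zeros, unit values, and critical points of $f,g$), to confirm that no other parameter range is possible.

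The main obstacle I anticipate is bookkeeping rather than conceptual: one must keep track of signs, ensure the square roots in the prefactors are real for every admissible $(c,d)$ in each row of Table \ref{table:sheets}, and verify that the formulas remain real-analytic at the limits where $c-d\to 0$ (so that the $c=d$ subcase is consistent with the $c\ne d$ subcase by L'Hospital's rule on $\sinh(\sqrt{c-d}\,x)/\sqrt{c-d}$). A secondary subtlety is the freedom $f\to -f$, $g\to -g$ left by the quadratic constraints \eqref{eqn:ode1}, \eqref{eqn:ode3}; this freedom is fixed by the choice of sign in the initial conditions of Table \ref{table:sheets} and corresponds to reflecting the surface, so no loss of generality occurs.
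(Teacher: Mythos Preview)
Your proposal is correct and follows essentially the same approach as the paper: the paper's own proof simply states that it is the same as that of \cite[Proposition 2.1]{cho_deformation_2018}, and your outline (reduce via Lemma \ref{lemm:solutionRho}, then solve \eqref{eqn:ode} from the general solution \eqref{eqn:cneqd} under the four initial conditions of Table \ref{table:sheets}) is exactly that argument spelled out. Your handling of the integration constants, the sign conventions, and the $c=d$ limit is accurate.
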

\begin{proof}
The proof is essentially the same as that of \cite[Proposition 2.1]{cho_deformation_2018}.
\end{proof}

\begin{rema}\label{rema:global}
We make a few essential remarks about Proposition \ref{prop:solutionFG}.
	\begin{itemize}
		\item We have now extended the domain globally under our hypotheses.
			Therefore, we may deduce that non-planar timelike minimal surfaces with planar curvature lines do not have any flat points globally, and we may drop this condition from now.
			(In fact, we may also infer that these surfaces admit isothermic coordinates globally.)
		\item We now allow $\rho$ to map into $\mathbb{R}$ as opposed to $\mathbb{R}_+$. By doing so, we now treat timelike minimal surfaces with planar curvature lines as generalized timelike minimal surfaces.
			(We can show that these surfaces are actually \emph{minfaces}, see Section \ref{sect:singularities} in the appendix.)
		\item In cases (1a) through (1c), we allow $c - d < 0$. Even in such case, we see that $f(x)$ and $g(y)$ are real-valued analytic functions via the identities
			\[
				\cosh{(\sqrt{c-d}\, x)} = \cos{(\sqrt{d-c}\, x)}, \quad \sinh{(\sqrt{c-d}\, x)} = \sqrt{-1} \sin{(\sqrt{d-c}\, x)}.
			\]
			Furthermore, cases (1b) and (1c) also include the case when $c = d$.
			However, since the resulting solution is the same solution as that in case (1a) up to shift of parameters $x$ and $y$, we do not write these cases explicitly.
		\item For case (2), we note that this is a Lorentzian analogue of the Bonnet-Lie transformation (see, for example, \cite[\S 394]{bianchi_lezioni_1903}), giving an associated family of the surface with solution $\rho(x,y) = -y$ up to coordinate change. To see this explicitly, we introduce a parameter $\lambda$ and consider the following change of coordinates:
			\[\begin{cases}
				\tilde{x} := \cosh\phi \cdot x - \sinh\phi \cdot y, & \tilde{y}:= \sinh\phi \cdot x - \cosh\phi \cdot y, \\
				\lambda:=\phoro{e}^{-j\phi}, &\tilde{\mathtt{q}}:=-\frac{1}{2}\lambda^{-2}=\lambda^{-2}\texttt{q}.
			\end{cases}\]
	\end{itemize}
\end{rema}

Summarizing, we obtain the following complete classification of non-planar timelike minimal surfaces with planar curvature lines.
\begin{theo}\label{theo:bifurcation}
	Let $F(x,y)$ be a non-planar generalized timelike minimal surface in $\mathbb{R}^{2,1}$ with isothermic coordinates $(x,y)$ such that the induced metric is $\dif s^2 = \rho^2(\dif x^2 - \dif y^2)$.
	Then $F$ has planar curvature lines if and only if $\rho(x,y)$ satisfies Proposition \ref{prop:solutionFG}.
	Furthermore, for different values of $(c, d)$ or $\lambda$ as in Remark \ref{rema:global}, the Lorentz conformal factor $\rho(x,y)$ or the surface $F(x,y)$ has the following properties, based on Figure \ref{fig:bifurcation}:

\begin{description}
	\item[Case (1)] If $\Box \rho \not \equiv 0$, when $(c,d)$ are on the region marked by
		\begin{itemize}
			\item \circled{$1$} $:$ $\rho$ is constant in the $x$-direction, but periodic in the $y$-direction,
			\item \circled{$2$} $:$ $\rho$ is periodic in both the $x$-direction and the $y$-direction,
			\item \circled{$3$}, \circled{$4$}, \circled{$6$}, \circled{$7$}, \circled{$9$}, or \circled{$10$} $:$ $\rho$ is not periodic in both the $x$-direction and the $y$-direction,
			\item \circled{$5$} $:$ $\rho$ is not periodic in the $x$-direction, but constant in the $y$-direction,
			\item \circled{$8$} $:$ $\rho$ is constant in the $x$-direction, but not periodic in the $y$-direction.
		\end{itemize}

	\item[Case (2)] If $\Box \rho \equiv 0$, when $\lambda$ is on the region marked by
		\begin{itemize}
			\item \circled{$11$} $:$ $F$ is a surface of revolution,
			\item \circled{$12$} $:$ $F$ is a surface in the associated family of \circled{$11$}.
		\end{itemize}
\end{description}
\end{theo}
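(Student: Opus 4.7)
The equivalence between $F$ having planar curvature lines and $\rho$ satisfying Proposition~\ref{prop:solutionFG} is essentially a restatement of the work preceding the theorem: Lemma~\ref{lemm:solutionRho} together with the initial conditions from Table~\ref{table:sheets} yield the forward direction, while the converse is a direct verification that each $\rho$ in the list satisfies both \eqref{eqn:pde1} and $\rho_{xy} = 0$, so that the associated surface (reconstructed via the Gauss--Weingarten system \eqref{eqn:gaussW}) is timelike minimal with planar curvature lines. The substance of the theorem is therefore the region-by-region behavior of $\rho$.

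For Case (1), the plan is to read off periodicity and constancy of $\rho = (f^2 - g^2 + 1)/(f_x - g_y)$ directly from the explicit forms of $f(x)$ and $g(y)$ given in Proposition~\ref{prop:solutionFG}. The elementary observations needed are: (a) $f \equiv 0$ (resp.\ $g \equiv 0$) if and only if $c = 0$ (resp.\ $d = 0$) in Case (1a); (b) when $c - d > 0$, the factors $\cosh(\sqrt{c-d}\,x)$ and $\sinh(\sqrt{c-d}\,x)$ are strictly non-periodic, while when $c - d < 0$ the identities in Remark~\ref{rema:global} convert them into $\cos(\sqrt{d-c}\,x)$ and $\sqrt{-1}\,\sin(\sqrt{d-c}\,x)$, which are periodic (and analogously for $g$ in $y$). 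Combining (a) and (b), each connected region of the $(c,d)$-plane, carved out by the signs of $c$, $d$, and $c - d$, is assigned one of the listed behaviors. In particular, \circled{1} corresponds to $c = 0$ with $d > 0$, \circled{2} to $c, d$ nonzero with $c - d < 0$, \circled{5} and \circled{8} to the axis cases where exactly one of $f, g$ vanishes and the other is non-periodic, and \circled{3}, \circled{4}, \circled{6}, \circled{7}, \circled{9}, \circled{10} to the sub-cases of (1a)--(1d) in which both $f$ and $g$ are non-constant and non-periodic.

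For Case (2), when $\phi = 0$ one has $\rho(x,y) = -y$; reconstructing the Weierstrass data from the Gauss--Weingarten system \eqref{eqn:gaussW} and Fact~\ref{fact:Weierstrass1} then exhibits this surface as a Lorentzian catenoid-type surface of revolution, giving region \circled{11}. For nonzero $\phi$, the change of coordinates and the parameter $\lambda = \phoro{e}^{-j\phi}$ recorded in Remark~\ref{rema:global} rescale the Hopf differential by $\lambda^{-2}$, which by the first bullet of Remark~\ref{rema:asso_family} is precisely the transformation defining the associated family; hence these surfaces fill out region \circled{12}. The main obstacle is the bookkeeping in Case (1): one must check that the degenerate loci $c = 0$, $d = 0$, $c = d$, and $2c = d$ that delimit the sub-cases (1a)--(1d) partition the parameter space without conflict, and in particular that Cases (1b) and (1c) overlap consistently with (1a) along the axes and along $c = d$, so that the resulting partition matches the labeled regions of Figure~\ref{fig:bifurcation}.
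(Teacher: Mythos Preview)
Your proposal is correct and takes essentially the same approach as the paper. In fact, the paper provides no separate proof of this theorem at all: it is introduced with the phrase ``Summarizing, we obtain the following complete classification,'' and the listed properties are meant to be read off directly from the explicit formulas in Proposition~\ref{prop:solutionFG} together with Remark~\ref{rema:global}, exactly as you outline.
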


\savebox{\mybox}{\includegraphics[width=0.20\textwidth]{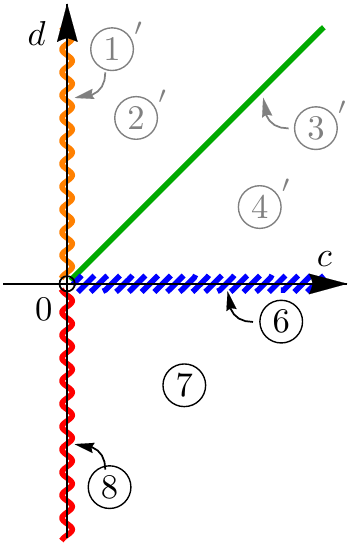}}
\begin{figure}
	\centering
	\begin{subfigure}{0.20\textwidth}
		\vbox to \ht\mybox{%
			\vfill
			\includegraphics[width=\textwidth]{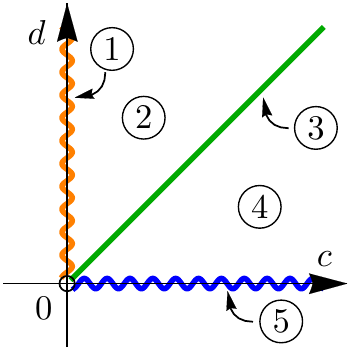}
			\vfill
		}
		\caption{Case (1a)}
		\label{fig:sheet1}
	\end{subfigure}
	\quad
	\begin{subfigure}{0.20\textwidth}
		\usebox{\mybox}
		\caption{Case (1b)}
		\label{fig:sheet2}
	\end{subfigure}
	\quad
	\begin{subfigure}{0.20\textwidth}
		\vbox to \ht\mybox{%
			\vfill
			\includegraphics[width=\textwidth]{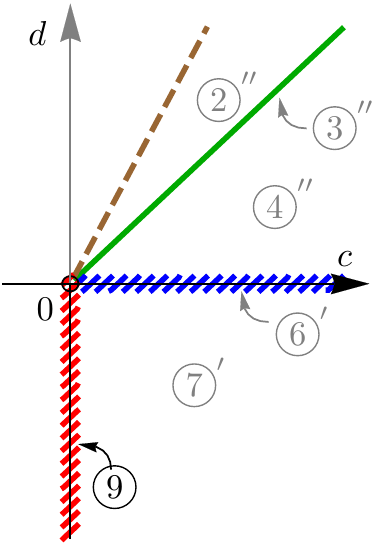}
			\vfill
		}
		\caption{Case (1c)}
		\label{fig:sheet3}
	\end{subfigure}
	\quad
	\begin{subfigure}{0.20\textwidth}
		\vbox to \ht\mybox{%
			\vfill
			\includegraphics[width=\textwidth]{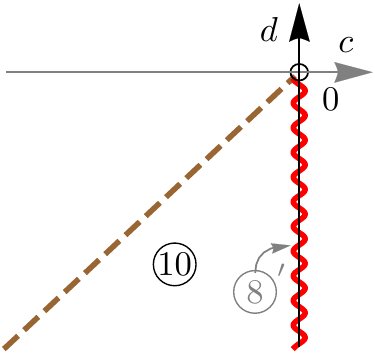}
			\vfill
		}
		\caption{Case (1d)}
		\label{fig:sheet4}
	\end{subfigure}
	\par\bigskip
	\savebox{\mybox}{\includegraphics[width=0.19\textwidth]{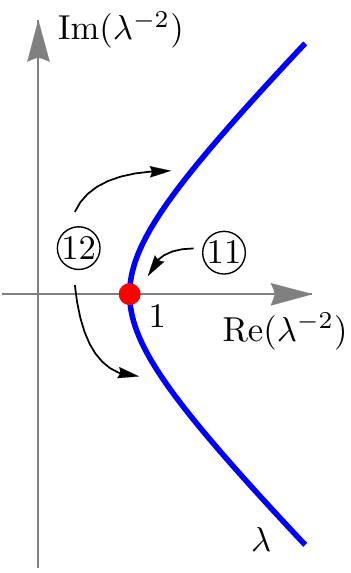}}
	\begin{subfigure}{0.19\textwidth}
		\usebox{\mybox}
		\caption{Case (2)}
		\label{fig:case2}
	\end{subfigure}
	\quad\quad
	\begin{subfigure}{0.4\textwidth}
		\begin{center}
		\vbox to \ht\mybox{%
			\vfill
		\begin{tabular}[c]{@{} l l l @{}}
			\toprule
				& values of $(c, d)$ & represents\\
			\midrule
				\includegraphics[scale=0.5]{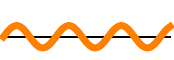} & $c = 0, d > 0$ & surface \circled{$1$} \\
				\includegraphics[scale=0.5]{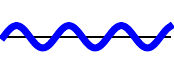} & $c > 0, d = 0$ & surface \circled{$5$} \\
				\includegraphics[scale=0.5]{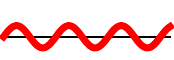} & $c = 0, d < 0$ & surface \circled{$8$} \\
				\includegraphics[scale=0.5]{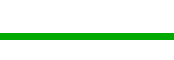} & $c = d$ & surface \circled{$3$} \\
				\includegraphics[scale=0.5]{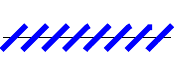} & $c > 0, d = 0$ & surface \circled{$6$} \\
				\includegraphics[scale=0.5]{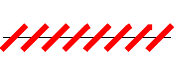} & $c = 0, d < 0$ & surface \circled{$9$} \\
				\includegraphics[scale=0.5]{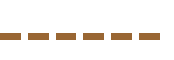} &  & boundary\\
			\bottomrule
		\end{tabular}
			\vfill
		}		
		\end{center}
	\end{subfigure}
	\caption{Bifurcation diagrams per choice of initial conditions and values of $(c,d)$ (see also Table \ref{table:sheets}). For example, surface \circlenum{8} is obtained by choosing initial conditions $f(0) = 0$ and $g(0) = \pm 1$ (since it is under case (1b), see Table \ref{table:sheets}) and choosing $(c,d)$ such that $c = 0, d < 0$. On the other hand, surface \circlenum{9} is obtained by choosing initial conditions $f(0) = \pm 1$ and $g(0) = \pm 1$ (since it is under case (1c)) and letting $c = 0, d < 0$. Finally, $\circlenum{2}'$ gives the same surface as $\circlenum{2}$ up to shift of parameters.}
	\label{fig:bifurcation}
\end{figure}

\subsection{Axial directions and the Weierstrass data}
From the explicit solutions of the Lorentz conformal factor $\rho$, we now aim to recover the Weierstrass data.
The Weierstrass data are not unique for a given timelike minimal surface; for example, applying any rigid motion to the surface will change its Weierstrass data.
Therefore, to decide how the surface is aligned in the ambient space $\mathbb{R}^{2,1}$, we use the existence of \emph{axial directions} as defined in \cite[Proposition 2.2]{cho_deformation_2017} (see also \cite[Proposition 3.A]{walter_explicit_1987}).
After aligning axial directions according to its causality, we recover the unit normal vector, allowing us to calculate the Weierstrass data.
First, we show the existence of axial directions.

\begin{prop}\label{prop:axial}
If there exists $x_1$ \resp{$y_1$} such that $f(x_1) \neq 0$ \resp{$g(y_1) \neq 0$} in Proposition \ref{prop:solutionFG}, then there exists a unique non-zero constant vector $\vec{v}_1$ \resp{$\vec{v}_2$} such that
	\[
		\langle m(x,y), \vec{v}_1\rangle = \langle m_y(x,y), \vec{v}_1 \rangle = 0
			\quad\text{\resp{$\langle n(x,y), \vec{v}_2\rangle = \langle n_x(x,y), \vec{v}_2 \rangle = 0$}},
	\]
where $m := \rho^{-2} (F_x \times F_{xx})$ \resp{$n := \rho^{-2} (F_y \times F_{yy})$} and
\begin{equation}\label{eqn:v1}
	\begin{gathered}
	\vec{v}_1 := -\frac{\rho_x}{\rho}N - \frac{\rho_{xx}\rho - {\rho_x}^2}{\rho^2}F_x + \frac{\rho_x \rho_y}{\rho^2} F_y\quad
		\text{\resp{$\vec{v}_2 := \frac{\rho_y}{\rho}N - \frac{\rho_x \rho_y}{\rho^2} F_x + \frac{\rho_{yy}\rho - {\rho_y}^2}{\rho^2}F_y$}}.
	\end{gathered}
\end{equation}
Furthermore, if $\vec{v}_1$ and $\vec{v}_2$ both exist, then they are orthogonal to each other. We call $\vec{v}_1$ and $\vec{v}_2$ the \emph{axial directions} of $F(x,y)$.
\end{prop}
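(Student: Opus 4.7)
The plan is to verify directly, by substitution into the Gauss–Weingarten equations \eqref{eqn:gaussW}, that the explicit vector $\vec{v}_1$ defined in \eqref{eqn:v1} is constant, non-zero, and satisfies the orthogonality relations $\langle m, \vec{v}_1\rangle = \langle m_y, \vec{v}_1 \rangle = 0$. The corresponding claims for $\vec{v}_2$ are then established by a parallel computation in which $F_y, F_{yy}$ and $n$ replace $F_x, F_{xx}$ and $m$ throughout.

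The main work, and the principal obstacle, lies in showing $\partial_x \vec{v}_1 = \partial_y \vec{v}_1 = 0$. Each of these derivatives, expanded in the moving frame $\{F_x, F_y, N\}$ via \eqref{eqn:gaussW}, produces three frame coefficients that are rational expressions in the partial derivatives of $\rho$ up to second order in the case of $\partial_y$ and up to third order in the case of $\partial_x$. The planar curvature line condition $\rho_{xy}=0$ kills every cross term that would otherwise obstruct constancy, and the timelike minimality equation \eqref{eqn:pde1} is what makes the remaining $F_y$-coefficient of $\partial_y \vec{v}_1$, which simplifies to $\rho_x \rho^{-3}[1 + (\rho_x^2 - \rho_y^2) - \rho\cdot\Box\rho]$, collapse to zero. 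For $\partial_x \vec{v}_1$ one additionally needs the expression for $\rho_{xxx}$ obtained by differentiating \eqref{eqn:pde1} in $x$ and applying $\rho_{xy}=0$; with this substitution, all three frame components vanish identically.

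Once constancy is established, the orthogonality relations are a routine computation. Expanding $m = \rho^{-2}(F_x \times F_{xx})$ in the frame $\{F_x, F_y, N\}$ via \eqref{eqn:gaussW} reduces $\langle m, \vec{v}_1\rangle$ and, after a single differentiation, $\langle m_y, \vec{v}_1 \rangle$ to algebraic identities that are satisfied by \eqref{eqn:pde1} together with $\rho_{xy}=0$. Non-vanishing of $\vec{v}_1$ follows because its $N$-component equals $-\rho_x/\rho = -f(x)/\rho$ by \eqref{eqn:recover1}, which is non-zero at $(x_1, y)$ under the hypothesis $f(x_1) \neq 0$; constancy then forces $\vec{v}_1$ to be nowhere zero. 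Uniqueness up to scalar follows from the linear independence of $m$ and $m_y$ at a generic point, so their common orthogonal complement in $\mathbb{R}^{2,1}$ is one-dimensional.

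Finally, for the mutual orthogonality $\langle \vec{v}_1, \vec{v}_2\rangle = 0$, direct computation using the orthogonality of $\{F_x, F_y, N\}$ together with the signature $\langle F_x, F_x \rangle = -\langle F_y, F_y \rangle = \rho^2$ yields
\[
\langle \vec{v}_1, \vec{v}_2\rangle = \frac{\rho_x \rho_y}{\rho^2}\bigl[\rho\cdot\Box\rho - (\rho_x^2 - \rho_y^2) - 1\bigr],
\]
which vanishes identically by the timelike minimality equation \eqref{eqn:pde1}.
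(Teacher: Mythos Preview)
Your proposal is correct and follows essentially the same approach as the paper: the paper's proof merely states that, as in the analogous propositions of \cite{cho_deformation_2017, cho_deformation_2018}, one uses \eqref{eqn:gaussW} and \eqref{eqn:pde} to verify directly that the required properties hold. Your write-up supplies the details of that verification, correctly identifying that $\rho_{xy}=0$ eliminates the mixed terms, that the Gauss equation \eqref{eqn:pde1} (and its $x$-derivative, for $\partial_x\vec{v}_1$) forces the remaining frame coefficients to vanish, and that the explicit computation of $\langle\vec{v}_1,\vec{v}_2\rangle$ reduces to the left-hand side of \eqref{eqn:pde1}.
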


\begin{proof}
Similar to the proof of \cite[Proposition 2.2]{cho_deformation_2017} and \cite[Proposition 2.2]{cho_deformation_2018}, using \eqref{eqn:gaussW} and \eqref{eqn:pde}, we may calculate that all the required property holds.
\end{proof}

We use \eqref{eqn:gaussW}, \eqref{eqn:pde}, \eqref{eqn:solutionRho1}, \eqref{eqn:ode}, and \eqref{eqn:v1} to calculate that the causality of $\vec{v_1}$ and $\vec{v_2}$ depends on $c$ and $d$, respectively; explicitly,
	\[
		\langle \vec{v}_1, \vec{v}_1 \rangle = c \quad\text{and}\quad \langle \vec{v}_2, \vec{v}_2 \rangle = -d.
	\]
Hence, we remark that, by Table \ref{table:sheets}, at least one of $\vec{v}_1$ or $\vec{v}_2$ is always spacelike when they both exist. By aligning the axial directions in the ambient space $\mathbb{R}^{2,1}$ correctly, we now calculate the unit normal vector using the following lemma. Note that we define $\vec{e}_j$ as the unit vectors in the $\xi_j$ direction for $j = 1, 2, 0$.

\begin{lemm}\label{lemm:normal}
	For the different alignments of $\vec{v}_1$ or $\vec{v}_2$, we can deduce the following regarding the unit normal vector $N(x,y) = (N_1(x,y), N_2(x,y), N_0(x,y))$:
	\begin{center}
		{\renewcommand{\arraystretch}{1.2}
		\begin{tabular}{@{} ll @{}}
			\toprule
				Alignment of axial direction & Property of the unit normal vector\\
			\midrule
				$\vec{v}_1 \parallel \vec{e}_2$ & $N_2 = \pm \frac{1}{\sqrt{c}}\frac{\rho_x}{\rho}$\\
				$\vec{v}_1 \parallel  \vec{e}_1 + \vec{e}_0$ & $N_1 - N_0 = \pm\frac{\rho_x}{\rho}$\\
				$\vec{v}_1 \parallel \vec{e}_0$ & $N_0  = \pm \frac{1}{\sqrt{-c}}\frac{\rho_x}{\rho}$\\
			\midrule
				$\vec{v}_2 \parallel a_1\vec{e}_1 + a_0\vec{e}_0$ & $a_1N_1 - a_0N_0 = \pm \sqrt{\frac{a_0^2 - a_1^2}{d}} \frac{\rho_y}{\rho}$\\
				$\vec{v}_2 \parallel a_1\vec{e}_1 + a_2\vec{e}_2$ & $a_1N_1 + a_2N_2 = \pm \sqrt{\frac{a_1^2 + a_2^2}{-d}} \frac{\rho_y}{\rho}$\\
			\bottomrule
		\end{tabular}}
	\end{center}
	Here, $a_1$, $a_2$ and $a_0$ are any real constants.
\end{lemm}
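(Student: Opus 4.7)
The starting point of the proof is a single direct computation that yields two identities linking $\vec{v}_1$, $\vec{v}_2$, and $N$. Using the explicit formula \eqref{eqn:v1} for the axial directions together with the fact that $N$ is the unit spacelike normal to the surface (so $\langle N, N\rangle = 1$ while $\langle N, F_x\rangle = \langle N, F_y\rangle = 0$), the $F_x$- and $F_y$-components of $\vec{v}_1$ and $\vec{v}_2$ drop out of the inner product with $N$ and we obtain
\[
\langle \vec{v}_1, N\rangle = -\frac{\rho_x}{\rho}, \qquad \langle \vec{v}_2, N\rangle = \frac{\rho_y}{\rho}.
\]
My plan is to derive each row of the table by specializing the left-hand side of one of these identities to the stated alignment of $\vec{v}_i$, and using the known Lorentzian norms $\langle \vec{v}_1, \vec{v}_1\rangle = c$ and $\langle \vec{v}_2, \vec{v}_2\rangle = -d$ (recorded just before the lemma) to fix the scaling factor of $\vec{v}_i$ in that direction.

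The resulting case analysis should be essentially mechanical. For $\vec{v}_1 \parallel \vec{e}_2$, the norm constraint $c > 0$ forces $\vec{v}_1 = \pm\sqrt{c}\,\vec{e}_2$, and the key identity becomes $\pm\sqrt{c}\,N_2 = -\rho_x/\rho$, giving the first row; the case $\vec{v}_1 \parallel \vec{e}_0$ (with $c < 0$) is strictly analogous after noting that $\langle \vec{e}_0, N\rangle = -N_0$. For the two $\vec{v}_2$ rows I would write $\vec{v}_2 = \lambda(a_1\vec{e}_1 + a_0\vec{e}_0)$ or $\vec{v}_2 = \lambda(a_1\vec{e}_1 + a_2\vec{e}_2)$, solve for $\lambda^2$ via the norm equations $\lambda^2(a_1^2 - a_0^2) = -d$ or $\lambda^2(a_1^2 + a_2^2) = -d$ respectively, and then divide the identity $\langle \vec{v}_2, N\rangle = \rho_y/\rho$ through by $\lambda$; this reads off the last two rows verbatim.

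The one point demanding extra care, and what I expect to be the main obstacle, is the null alignment $\vec{v}_1 \parallel \vec{e}_1 + \vec{e}_0$ (corresponding to $c = 0$): a null vector is not pinned down in magnitude by its Lorentzian norm, so the scaling step in the preceding paragraph breaks down. The fix is that a hyperbolic rotation in the $(\vec{e}_1, \vec{e}_0)$-plane acts on $\vec{e}_1 + \vec{e}_0$ as a rescaling by an arbitrary positive factor $e^{\phi}$, so by absorbing an appropriate boost into the isometry used to align $\vec{v}_1$, one may normalize $\vec{v}_1 = \pm(\vec{e}_1 + \vec{e}_0)$. The identity then reads $\pm(N_1 - N_0) = -\rho_x/\rho$, which is the second row, and this boost-normalization is the only non-mechanical step of the argument.
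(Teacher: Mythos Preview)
Your proposal is correct and follows the same computational route that the paper defers to its cited references: from \eqref{eqn:v1} one reads off $\langle \vec{v}_1, N\rangle = -\rho_x/\rho$ and $\langle \vec{v}_2, N\rangle = \rho_y/\rho$, and each row of the table is then obtained by fixing the proportionality constant via the known norms $\langle \vec{v}_1,\vec{v}_1\rangle = c$, $\langle \vec{v}_2,\vec{v}_2\rangle = -d$. Your handling of the null case $c=0$ via a boost normalization is exactly the right additional observation.
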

\begin{proof}
The proof is similar to the proof in \cite[Proposition 2.3]{cho_deformation_2017}, and \cite[Lemma 2.7, Lemma 2.8]{cho_deformation_2018}.
\end{proof}

Using the fact that the meromorphic function $h$ of the Weierstrass data is the unit normal vector function under the stereographic projection, and that $\phoro{q} = -h_z \eta = -\frac{1}{2}$, we recover the Weierstrass data via
	\[
		h(z) = h(x,y) = \frac{1}{1 - N_1}(N_2 + j N_0) \quad\text{and}\quad \eta(z) = \frac{1}{2 h_z},
	\]
where the signs of $N_1$, $N_2$, and $N_0$ are decided so that $h$ satisfies the Cauchy-Riemann type conditions \eqref{eqn:cauchyRiemann}.

\subsubsection{Case (1a)}\label{sssec:sheet1}

Assume that $c > 0$ and $d > 0$. We have that $\vec{v}_1$ is spacelike, while $\vec{v}_2$ is timelike; therefore, we align the axial directions so that $\vec{v}_1 \parallel \vec{e}_2$ and $\vec{v}_2 \parallel \vec{e}_0$.
Then by Lemma \ref{lemm:normal}, we have that
	\[
		N = \left(\pm\sqrt{1 - \frac{1}{c}\frac{\rho_x^2}{\rho^2} + \frac{1}{d}\frac{\rho_y^2}{\rho^2}},\, \pm \frac{1}{\sqrt{c}}\frac{\rho_x}{\rho},\, \pm \frac{1}{\sqrt{d}}\frac{\rho_y}{\rho}\right).
	\]
Since we know that a homothety in the $(c,d)$-plane amounts to a homothety in the $(x,y)$-plane, by Proposition \ref{prop:solutionFG}, we can let $c = 4\cos^2 c_1$ and $d = 4\sin^2 c_1$ for $c_1 \in \left(0, \frac{\pi}{2}\right)$ without loss of generality (see Figure \ref{fig:pathS1}).
Using the unit normal vector, we find that
	\begin{equation}\label{eqn:wData1}
		\begin{aligned}
			h_1^{c_1}(z) &= \begin{cases}
					\frac{\sqrt{\cos{(2 c_1)}}}{\cos c_1 - \sin c_1} \ptanh\left(\sqrt{\cos{(2 c_1)}} \,z\right), &\text{if $c_1 \in (0, \frac{\pi}{4})$},\\
					\sqrt{2} z, & \text{if $c_1 = \frac{\pi}{4}$},\\
					\mathrlap{-\frac{\sqrt{-\cos{(2 c_1)}}}{\cos c_1 - \sin c_1} \ptan\left(\sqrt{-\cos{(2 c_1)}} \,z\right),}\hphantom{\frac{1}{2(\cos c_1 + \sin c_1)} \pcos^2 \left(\sqrt{-\cos{(2 c_1)}} \,z\right),} &\text{if $c_1 \in (\frac{\pi}{4},\frac{\pi}{2})$},
				\end{cases}\\
			\eta_1^{c_1}(z) &= \begin{cases}
					\frac{1}{2(\cos c_1 + \sin c_1)} \pcosh^2 \left(\sqrt{\cos{(2 c_1)}} \,z\right), &\text{if $c_1 \in (0, \frac{\pi}{4})$},\\
					\frac{1}{2\sqrt{2}}, & \text{if $c_1 = \frac{\pi}{4}$},\\
					\frac{1}{2(\cos c_1 + \sin c_1)} \pcos^2 \left(\sqrt{-\cos{(2 c_1)}} \,z\right), &\text{if $c_1 \in (\frac{\pi}{4},\frac{\pi}{2})$}.
				\end{cases}
		\end{aligned}
	\end{equation}
Note that $h_1^{c_1}(z)$ and $\eta_1^{c_1}(z)$ is also well-defined when $c_1 = 0, \frac{\pi}{2}$ by considering the directional limits.


\begin{rema}\label{rema:dpWdata}
The Weierstrass data given in \eqref{eqn:wData1} show that surfaces in case (1a) form a one-parameter family of surfaces.
However, by considering these surfaces separately, one can get different, and perhaps simpler, Weierstrass data.
\begin{itemize}
	\item For surfaces $\circled{1}$ and $\circled{2}$, by using $c = 4 \sinh^2 (\log \tilde{c}_1)$ and $d = 4 \cosh^2 (\log \tilde{c}_1)$ for $ \tilde{c}_1 \geq 1$, we obtain
		\[
			h_1^{\tilde{c}_1}(z) = \tilde{c}_1 \ptan z, \quad \eta_1^{\tilde{c}_1}(z) = \frac{1}{2 \tilde{c}_1}\pcos^2 z.
		\]
	\item For the surface $\circled{5}$, by letting $\vec{v}_1 \parallel \vec{e}_1$ and $\vec{v}_2 \parallel \vec{e}_0$, we obtain that
		\[
			\tilde{h}_1^{c_1}(z)\big |_{c_1 = \frac{\pi}{2}} = \phoro{e}^{z}, \quad \tilde{\eta}_1^{c_1}(z)\big |_{c_1 = \frac{\pi}{2}} = \frac{1}{2}\phoro{e}^{-z}.
		\]
\end{itemize}
\end{rema}
\begin{figure}
	\centering
	\savebox{\mybox}{\includegraphics[width=0.2\textwidth]{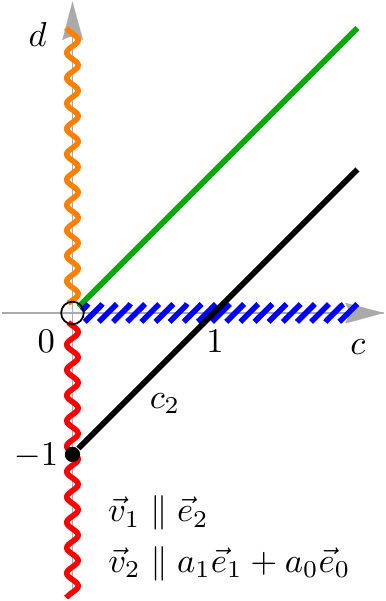}}
	\begin{subfigure}{0.2\textwidth}
		\vbox to \ht\mybox{%
			\vfill
			\includegraphics[width=\textwidth]{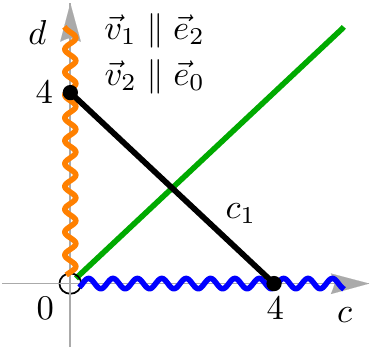}
			\vfill
		}
		\caption{Case (1a)}
		\label{fig:pathS1}
	\end{subfigure}
	\qquad
	\begin{subfigure}{0.2\textwidth}
		\usebox{\mybox}
		\caption{Case (1b)}
		\label{fig:pathS2}
	\end{subfigure}
	\qquad
	\begin{subfigure}{0.2\textwidth}
		\vbox to \ht\mybox{%
			\vfill
			\includegraphics[width=\textwidth]{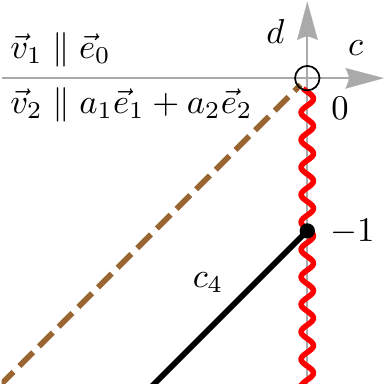}
			\vfill
		}
		\caption{Case (1d)}
		\label{fig:pathS4}
	\end{subfigure}
	\caption{$(c,d)$-paths for different cases. For the meaning of the diagram, see Figure \ref{fig:bifurcation}.}
	\label{fig:path1}
\end{figure}
	
\subsubsection{Case (1b)}\label{sssec:sheet2}
Assume that $c \geq 0$ but $d \in \mathbb{R}$, implying that now $\vec{v}_2$ changes its causal character.
Therefore, we align the axial directions so that $\vec{v}_1 \parallel \vec{e}_2$ and $\vec{v}_2 \parallel a_1\vec{e}_1 + a_0 \vec{e}_0$.
Since we only need to find the unit normal vector of surfaces $\circled{6}$, $\circled{7}$, and $\circled{8}$, we let $c = c_2^2$ and $d = c_2^2 -1$ for $c_2 \geq 0$, and further assume that $a_1 = 1$ and $a_0 = c_2$  (see Figure \ref{fig:pathS2}).
Then we have that the unit normal vector is
	\[
		N = \left(c_2 N_0 \pm \frac{\rho_y}{\rho},\, \pm \frac{1}{c_2}\frac{\rho_x}{\rho},\, N_0\right),
	\]
where $N_0$ can be found from the fact that $\langle N, N \rangle = 1$.
From the unit normal vector, after applying a shift of parameter $y \mapsto y - \log(1 + c_2)$, we calculate that
	\begin{equation}\label{eqn:wData2}
		h_2^{c_2}(z) = j \phoro{e}^{j z} - j c_2, \quad
		\eta_2^{c_2}(z) = \frac{1}{2}\phoro{e}^{-j z}.
	\end{equation}
Similar to the preceding case, note that $h_2^{c_2}(z)$ and $\eta_2^{c_2}(z)$ is also well-defined when $c_2 = 0$ by considering the directional limits.

\begin{rema}
	Note that if $c_2 > 1$, then \eqref{eqn:wData2} describes Weierstrass data for the surface $\circled{4}$, aligned differently in the ambient space $\mathbb{R}^{2,1}$ to the one given by \eqref{eqn:wData1} for $c_1 \in \left(\frac{\pi}{4}, \frac{\pi}{2}\right)$.
\end{rema}

\subsubsection{Case (1c)}
We only need to find the data for the surface $\circled{9}$ here, so assume that $c = 0$ and $d = -1$.
We align the axial directions so that $\vec{v}_1 \parallel \vec{e}_1 + \vec{e}_0$ and $\vec{v}_2 \parallel \vec{e}_2$, implying that the unit normal vector is
	\[
		N = \left(N_0 \pm \frac{\rho_x}{\rho},\, \pm \frac{1}{\sqrt{-d}}\frac{\rho_y}{\rho},\, N_0\right),
	\]
where $N_0$ can be found from the fact that $N$ has unit length.
After making the parameter shift $x \mapsto x - \log2$, we calculate the Weierstrass data as
	\begin{equation}\label{eqn:wData3}
		h_3(z) =\phoro{e}^z + j, \quad
		\eta_3 = \frac{1}{2}\phoro{e}^{-z}.
	\end{equation}

\subsubsection{Case (1d)}
Here, we have that $d < c \leq 0$.
Align the axial directions so that $\vec{v}_1 \parallel \vec{e}_0$ and $\vec{v}_2 \parallel a_1\vec{e}_1 + a_2 \vec{e}_2$.
In this case, we let $c = -c_4^2$ and $d = -c_4^2 - 1$ for $c_4 \geq 0$, and let $a_1 = 1$ and $a_2 = c_4$  (see Figure \ref{fig:pathS4}).
Then, the unit normal vector is
	\[
		N = \left(-c_4N_2 \pm \frac{\rho_y}{\rho}, N_2,\, \pm \frac{1}{c_4}\frac{\rho_x}{\rho}\right).
	\]
Using this, after a shift of parameter $y \mapsto y - \log(\sqrt{1 + c_4^2})$, we obtain that
	\begin{equation}\label{eqn:wData4}
		h_4^{c_4}(z) = j \phoro{e}^{jz} + c_4, \quad \eta_4^{c_4} = \frac{1}{2} \phoro{e}^{-j z}.
	\end{equation}


\subsubsection{Case (2)}
Finally, we assume that $\Box\rho \equiv 0$, and by Remark \ref{rema:global}, we only consider the case $\rho(x,y) = -y$.
We assume that the axial direction is $\vec{v}_2 = \vec{e}_1 + \vec{e}_0$.
Then similar to Lemma \ref{lemm:normal}, we can calculate that
	\[
		N_1 - N_0 = \frac{\rho_y}{\rho}.
	\]
Since $\rho_x \equiv 0$, we have that $N(x,y)$ has the form $N(x,y) = (N_1(y),\, 0,\, N_0(y))\cdot T(x)$ for an isometry transform $T(x) \in \mathrm{SO(2,1)}$ keeping the lightlike axis $\vec{v}_2$. Hence, we obtain the following lemma.

\begin{lemm}
	If $\rho(x,y) = y$, then the unit normal vector $N$ is given by
	\begin{align*}
		N(x,y) &= \left(\frac{y^2+1}{2y}, 0, \frac{y^2 - 1}{2y}\right) \cdot
			\begin{pmatrix}
				1-\frac{x^2}{2} & x & -\frac{x^2}{2}\\
				-x & 1 & -x\\
				\frac{x^2}{2} & -x & 1+\frac{x^2}{2}
			\end{pmatrix}
			= \left(\frac{1 - x^2 + y^2}{2y},\, \frac{x}{y},\, -\frac{1 + x^2 - y^2}{2y}\right).
	\end{align*}
\end{lemm}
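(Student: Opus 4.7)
The formula in the lemma displays $N(x,y)$ as a matrix product $(N_1(y), 0, N_0(y)) \cdot T(x)$, so my plan is to verify it by determining the row-vector factor first (the restriction of $N$ to the slice $x = 0$) and then identifying $T(x)$ with the one-parameter subgroup of $\mathrm{SO}(2,1)$ fixing the lightlike axial direction $\vec{v}_2 = \vec{e}_1 + \vec{e}_0$. This is exactly the structural decomposition announced in the sentence preceding the lemma.

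First, at $x = 0$ I would combine the axial-direction identity $N_1 - N_0 = \rho_y/\rho = 1/y$ (derived in the paragraph above) with the unit-length condition $\langle N, N \rangle = N_1^2 + N_2^2 - N_0^2 = 1$ and the alignment convention $N_2(0,y) = 0$, which is available because $\vec{v}_2 \perp \vec{e}_2$ lets me orient the ambient frame so that the initial slice of $N$ lies in the $\xi_2 = 0$ plane. Factoring $(N_1 + N_0)(N_1 - N_0) = 1$ then gives $N_1 + N_0 = y$, hence
\[
    N_1(y) = \frac{y^2 + 1}{2y}, \qquad N_0(y) = \frac{y^2 - 1}{2y},
\]
which matches the first factor in the stated formula.

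Next I would identify $T(x)$. Since $\rho_x \equiv 0$ and the axial direction $\vec{v}_2$ is lightlike, the surface is invariant under the stabilizer of $\vec{v}_2$ in $\mathrm{SO}(2,1)$, which is the abelian parabolic subgroup of null rotations. Writing $T(x) = \exp(xX)$ for a generator $X \in \mathfrak{so}(2,1)$ annihilating $(1,0,1)^{T}$, the nilpotency $X^3 = 0$ truncates the exponential to $I + xX + \tfrac{x^2}{2}X^2$, which, once $X$ is correctly normalized, reproduces the matrix in the lemma. To pin down the normalization I would compare $N_x(0,y) = (N_1(y), 0, N_0(y))\, X$ with the Weingarten relation $N_x = \rho^{-2} F_x = y^{-2} F_x$, using that the Minkowski orthonormality of the frame $(F_x/\rho, F_y/\rho, N)$ together with the orientation and $\rho = y$ determines $F_x(0,y)$ uniquely up to sign. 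This anchors $X$ so that the group parameter of $T(x)$ coincides with the intrinsic conformal coordinate $x$.

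Finally, I would multiply out $(N_1(y), 0, N_0(y)) \cdot T(x)$ to obtain the claimed closed form and check by direct substitution that $\langle N, N \rangle = 1$ globally, that $N_1 - N_0 = 1/y$ extends from $x = 0$ to all $(x,y)$ (which it does, since both sides are $T(x)$-invariant), and that the full Weingarten equations $N_x = y^{-2} F_x$ and $N_y = -y^{-2} F_y$ remain consistent with the Gauss-Weingarten system \eqref{eqn:gaussW}. The main obstacle is the second step: fixing both the sign and the scale of the infinitesimal null rotation $X$ so that its exponential parameter agrees with the intrinsic coordinate $x$. Once this calibration is made, every remaining verification is a short algebraic check.
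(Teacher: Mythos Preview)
Your proposal is correct and follows essentially the same route as the paper: the paragraph preceding the lemma already sets up the decomposition $N(x,y) = (N_1(y), 0, N_0(y)) \cdot T(x)$ with $T(x) \in \mathrm{SO}(2,1)$ fixing the lightlike axis $\vec{v}_2$, and the paper then simply records the outcome without further argument. You have supplied exactly the missing computational details—solving for $(N_1(y), N_0(y))$ from $N_1 - N_0 = 1/y$ and $\langle N, N\rangle = 1$, identifying $T(x)$ as the exponential of a nilpotent null-rotation generator, and calibrating its parameter against the Weingarten equation—so the two arguments coincide.
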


Therefore, we recover the Weierstrass data as follows:
	\begin{equation}\label{eqn:wData5}
		h_5(z) = \frac{z + j}{1 - j z}, \quad
		\eta_5(z) = \frac{1}{4}(j z - 1)^2.
	\end{equation}
Finally, by considering $(h_5, \eta_5 \dif z) \mapsto (h_5, \lambda^{-2}\eta_5 \dif z)$ for $\lambda$ as in Remark \ref{rema:global}, we obtain the Weierstrass data for surfaces $\circled{11}$ and $\circled{12}$.

In summary, we obtain the following complete classification of timelike minimal surfaces with planar curvature lines.

\begin{theo}\label{theo:wData}
	A generalized timelike minimal surface with planar curvature lines in Minkowski $3$-space must be a piece of one, and only one, of
	\begin{itemize}
		\item plane \textup{(P)} $(0, \dif z)$,
		\item[$\circled{\emph{1}}$] timelike catenoid with timelike axis \textup{(C\textsubscript{T})} $\left(\ptan z, \frac{1}{2}\pcos^2 z \dif z\right)$,
		\item[$\circled{\emph{2}}$] doubly periodic timelike minimal Bonnet-type surface (with timelike axial direction) \textup{(B\textsubscript{Tper})}
			\[
				\left\{ \left(\tilde{c}_1 \ptan z, \tfrac{1}{2 \tilde{c}_1}\pcos^2 z \dif z\right) : \tilde{c}_1 > 1 \right\},
			\]
		\item[$\circled{\emph{3}}$] timelike Enneper-type surface \textup{(E)} $\left(\sqrt{2} z, \frac{1}{2\sqrt{2}} \dif z\right)$,
		\item[$\circled{\emph{4}}$] timelike minimal Bonnet-type surface with timelike axial direction of first kind \textup{(B\textsubscript{T1})},
			\[
				\left\{ \left(j \phoro{e}^{j z} - j c_2, \tfrac{1}{2}\phoro{e}^{-j z} \dif z\right) : c_2 > 1 \right\},
			\]
		\item[$\circled{\emph{5}}$] immersed timelike catenoid with spacelike axis \textup{(C\textsubscript{S1})} $\left(\phoro{e}^z, \frac{1}{2}\phoro{e}^{-z} \dif z\right)$,
		\item[$\circled{\emph{6}}$] timelike minimal Bonnet-type surface with lightlike axial direction of first kind \textup{(B\textsubscript{L1})}
			\[
				\left(j \phoro{e}^{j z} - j, \tfrac{1}{2}\phoro{e}^{-j z} \dif z\right),
			\]
		\item[$\circled{\emph{7}}$] timelike minimal Bonnet-type surface with spacelike axial direction \textup{(B\textsubscript{S})},
			\[
				\left\{ \left(j \phoro{e}^{j z} - j c_2, \tfrac{1}{2}\phoro{e}^{-j z} \dif z\right) : 0 < c_2 < 1 \right\},
			\]
		\item[$\circled{\emph{8}}$] non-immersed timelike catenoid with spacelike axis \textup{(C\textsubscript{S2})} $\left(j \phoro{e}^{j z}, \frac{1}{2}\phoro{e}^{-j z} \dif z\right)$,
		\item[$\circled{\emph{9}}$] timelike minimal Bonnet-type surface with lightlike axial direction of second kind \textup{(B\textsubscript{L2})} $(\phoro{e}^z + j, \frac{1}{2} \phoro{e}^{-z} \dif z)$,
		\item[$\circled{\emph{10}}$] timelike minimal Bonnet-type surface with timelike axial direction of second kind \textup{(B\textsubscript{T2})}
			\[
				\left\{ \left( j \phoro{e}^{jz} + c_4,\tfrac{1}{2} \phoro{e}^{-j z} \dif z\right) : c_4 > 0 \right\},
			\]
		\item[$\circled{\emph{11}}$] timelike catenoid with lightlike axis \textup{(C\textsubscript{L})} $\left(\frac{z + j}{1 - j z}, \frac{1}{4}(j z - 1)^2 \dif z\right)$, or one member of its associated family $\circled{\emph{12}}$,
	\end{itemize}
	given with their respective Weierstrass data.
\end{theo}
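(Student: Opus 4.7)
The plan is to assemble the Weierstrass data computed case by case in Subsections 2.4.1--2.4.5 into a single canonical list, and then verify that the list is both exhaustive and non-redundant. The starting point is Theorem \ref{theo:bifurcation}, which guarantees that every non-planar timelike minimal surface with planar curvature lines is described by one of the Lorentz conformal factors $\rho$ enumerated in Proposition \ref{prop:solutionFG}. I would proceed case by case through (1a)--(1d) and (2), invoking Proposition \ref{prop:axial} to produce axial directions $\vec{v}_1$ and $\vec{v}_2$ whose causal characters are fixed by the identities $\langle \vec{v}_1, \vec{v}_1\rangle = c$ and $\langle \vec{v}_2, \vec{v}_2\rangle = -d$.

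Next, I would use the isometry freedom of $\mathbb{R}^{2,1}$ to align these axial directions with the canonical directions prescribed in Subsections 2.4.1--2.4.5 (timelike $\vec{e}_0$, spacelike $\vec{e}_1$ or $\vec{e}_2$, or lightlike combinations $\vec{e}_1 \pm \vec{e}_0$). Once the alignment is fixed, Lemma \ref{lemm:normal} pins down the components of the unit normal $N$ up to sign, and the formulas $h = (N_2 + jN_0)/(1 - N_1)$ and $\eta = 1/(2h_z)$, coming from $\phoro{q} = -\tfrac{1}{2}$ and the stereographic interpretation of $h$, yield the Weierstrass data. The explicit computations have already been carried out in \eqref{eqn:wData1}--\eqref{eqn:wData5}, with Remark \ref{rema:dpWdata} providing simpler representatives for the surfaces $\circled{1}$, $\circled{2}$, and $\circled{5}$. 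The plane case, excluded from Theorem \ref{theo:bifurcation} by the standing hypothesis $\phoro{q} = -\tfrac{1}{2}$, is appended separately with Weierstrass data $(0, \dif z)$.

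The main obstacle is the ``only one'' uniqueness clause. Some surfaces admit two different $\rho$-parametrizations: most notably surface $\circled{4}$, which arises both from Case (1a) with $c_1 \in (\pi/4, \pi/2)$ and from Case (1b) with $c_2 > 1$, and surface $\circled{2}$, whose presentation via \eqref{eqn:wData1} and via Remark \ref{rema:dpWdata} must be reconciled. One needs to confirm that these dual presentations produce the same surface up to ambient isometry; this reduces to an algebraic change of the parameters $c_1 \leftrightarrow c_2$ and $\tilde{c}_1$, combined with the disjointness of the regions in Figure \ref{fig:bifurcation}. Once this matching is verified, every admissible $\rho$ gives rise to exactly one entry in the list, completing the classification.
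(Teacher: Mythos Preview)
Your proposal is correct and follows essentially the same route as the paper: the theorem is stated there as a summary of the case-by-case computations in Subsections~2.4.1--2.4.5, built on Proposition~\ref{prop:axial} and Lemma~\ref{lemm:normal} exactly as you outline. If anything, your treatment of the ``only one'' clause is more explicit than the paper's, which leaves mutual exclusivity implicit in the bifurcation diagram of Figure~\ref{fig:bifurcation}.
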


\begin{figure}
	\centering
	\savebox{\mybox}{\includegraphics[width=0.23\textwidth]{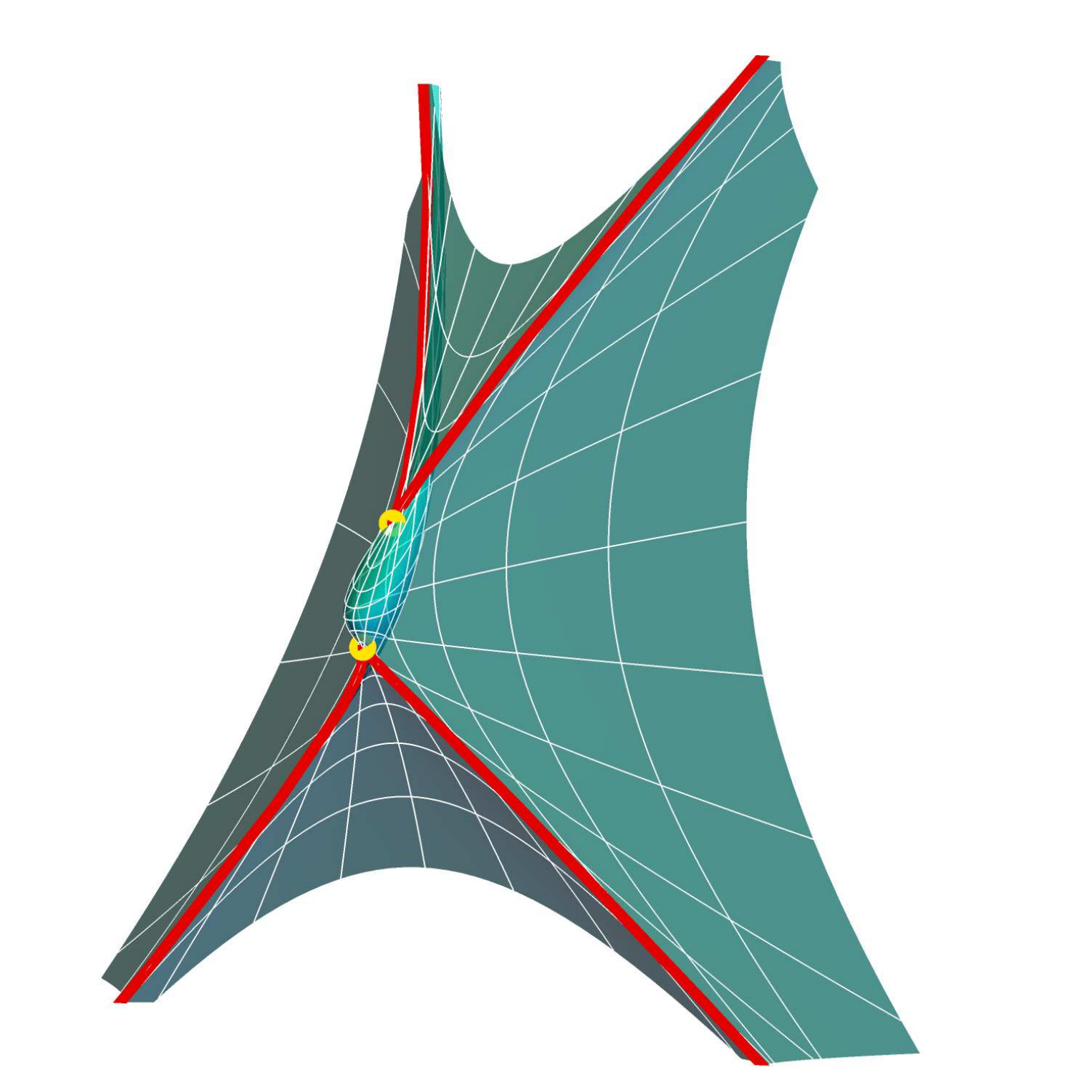}}
	\begin{subfigure}{0.23\textwidth}
		\vbox to \ht\mybox{%
			\vfill
			\includegraphics[width=1\textwidth]{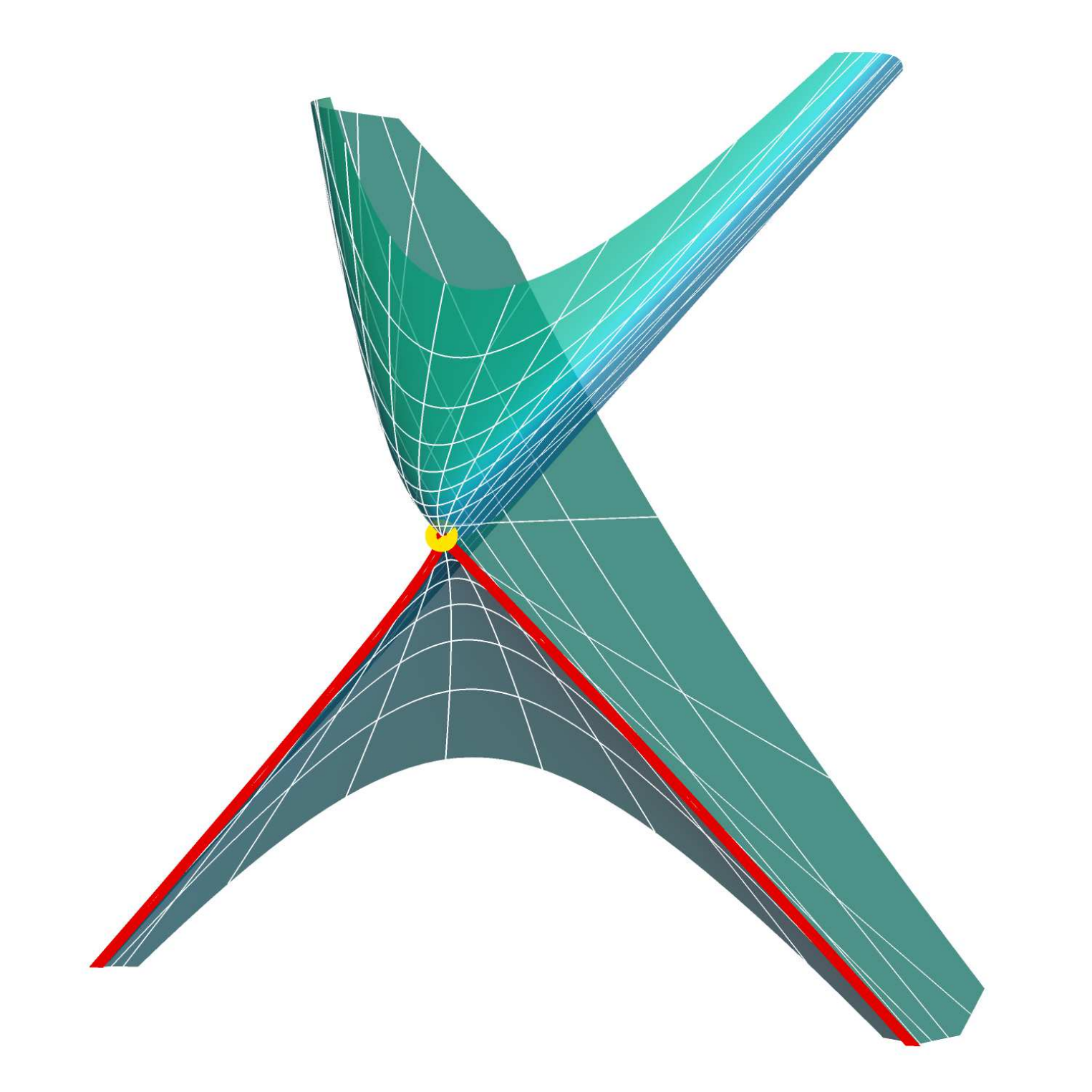}
			\vfill
		}
		\caption{\textup{B\textsubscript{T1}}}
	\end{subfigure}
	~
	\begin{subfigure}{0.23\textwidth}
		\vbox to \ht\mybox{%
			\vfill
			\includegraphics[width=1.0\textwidth]{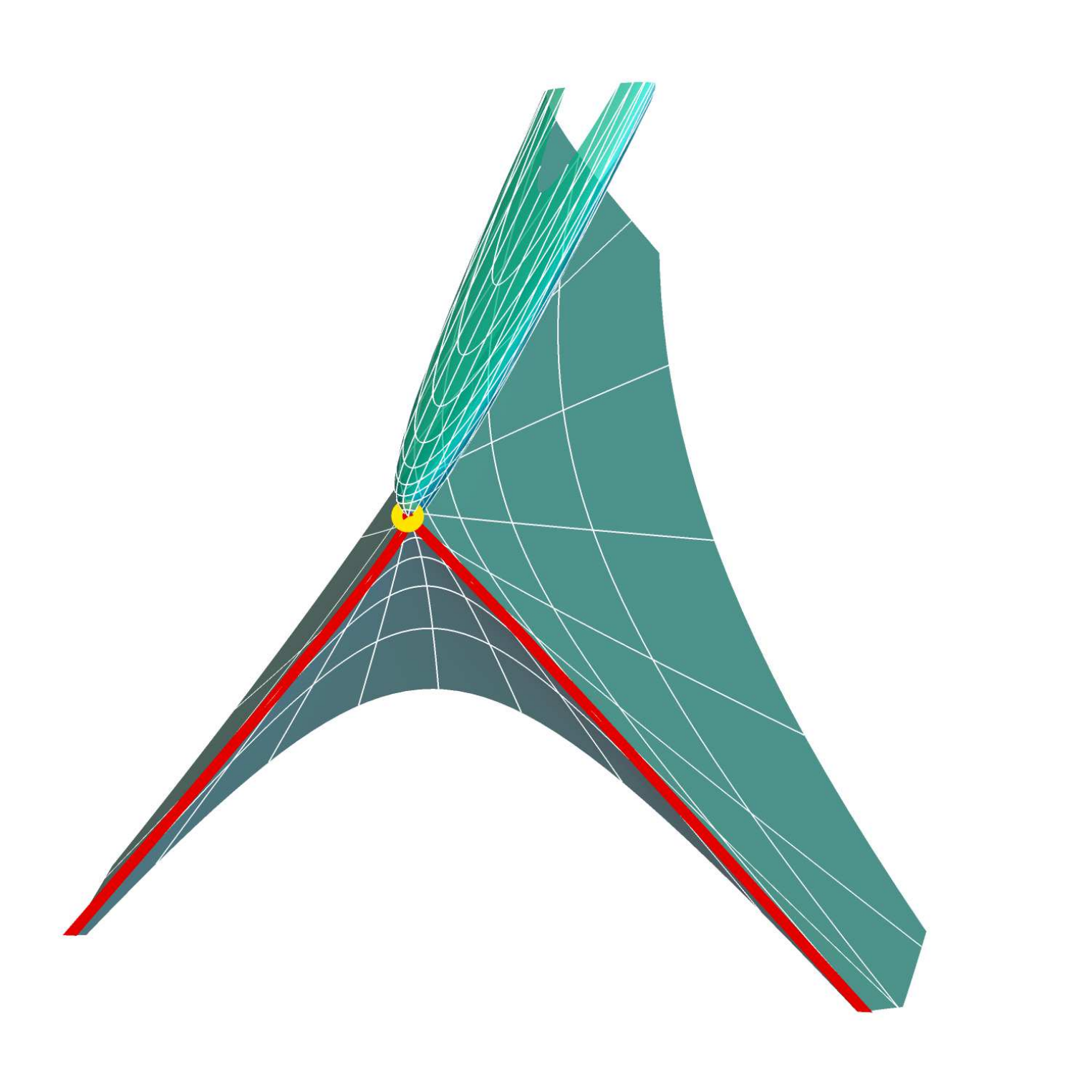}
			\vfill
		}
		\caption{\textup{B\textsubscript{L1}}}
	\end{subfigure}
	~
	\begin{subfigure}{0.23\textwidth}
		\usebox{\mybox}
		\caption{\textup{B\textsubscript{S}}}
	\end{subfigure}
	\par\bigskip
	\savebox{\mybox}{\includegraphics[width=0.23\textwidth]{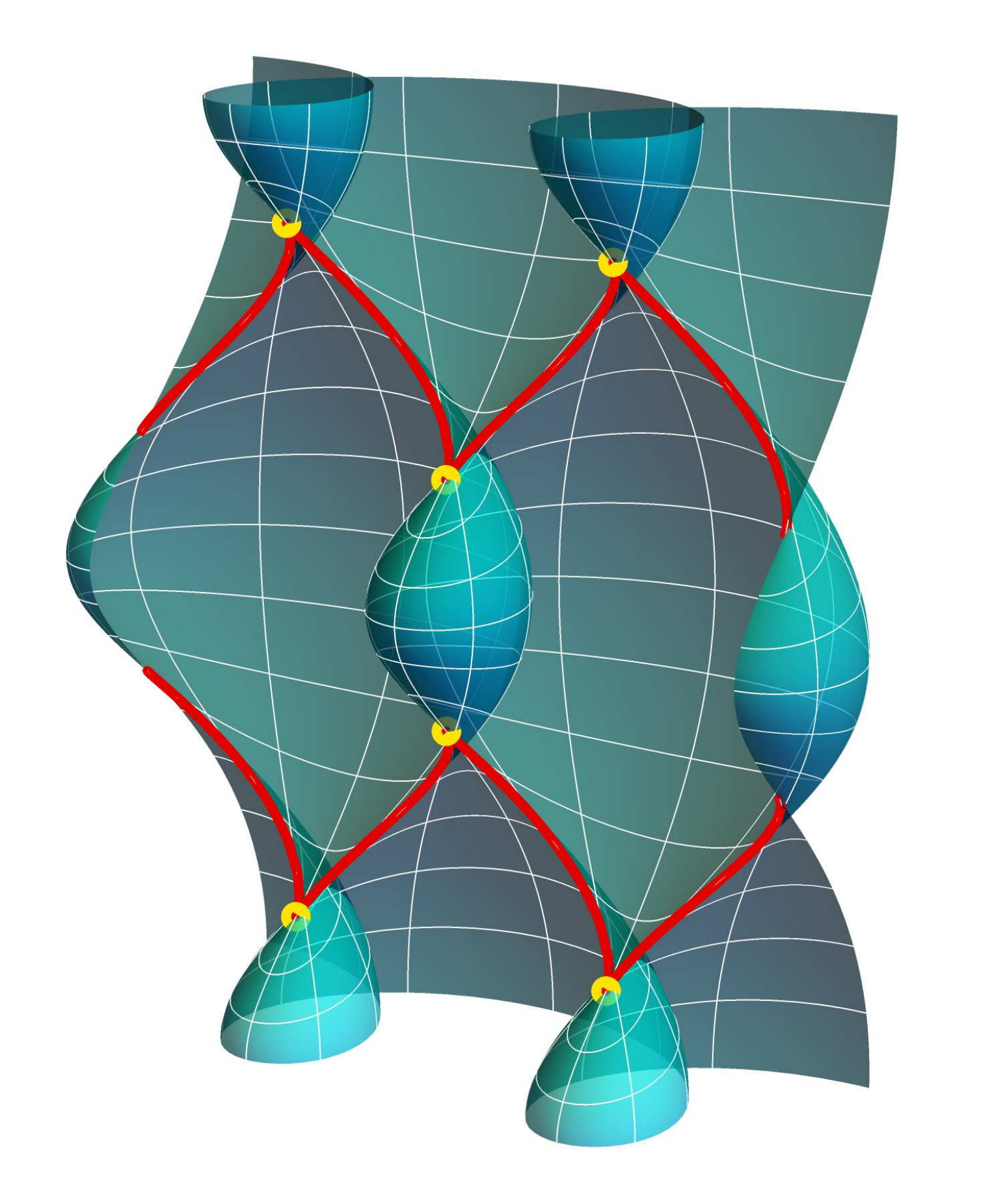}}
	\begin{subfigure}{0.23\textwidth}
		\usebox{\mybox}
		\caption{\textup{B\textsubscript{Tper}}}
	\end{subfigure}
	~
	\begin{subfigure}{0.23\textwidth}
		\vbox to \ht\mybox{%
			\vfill
			\includegraphics[width=1.0\textwidth]{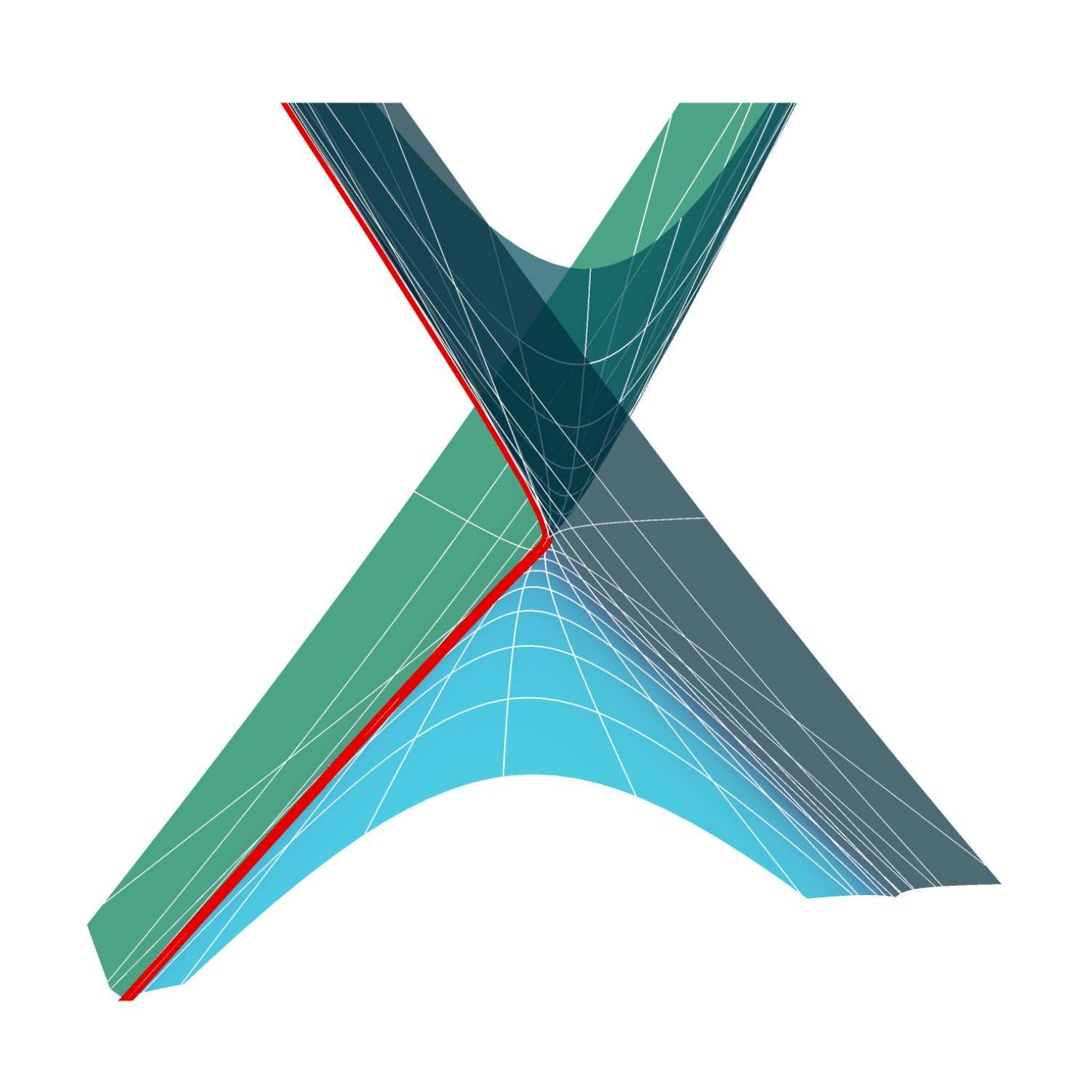}
			\vfill
		}
		\caption{\textup{B\textsubscript{T2}}}
	\end{subfigure}
	~
	\begin{subfigure}{0.23\textwidth}
		\vbox to \ht\mybox{%
			\vfill
			\includegraphics[width=1.0\textwidth]{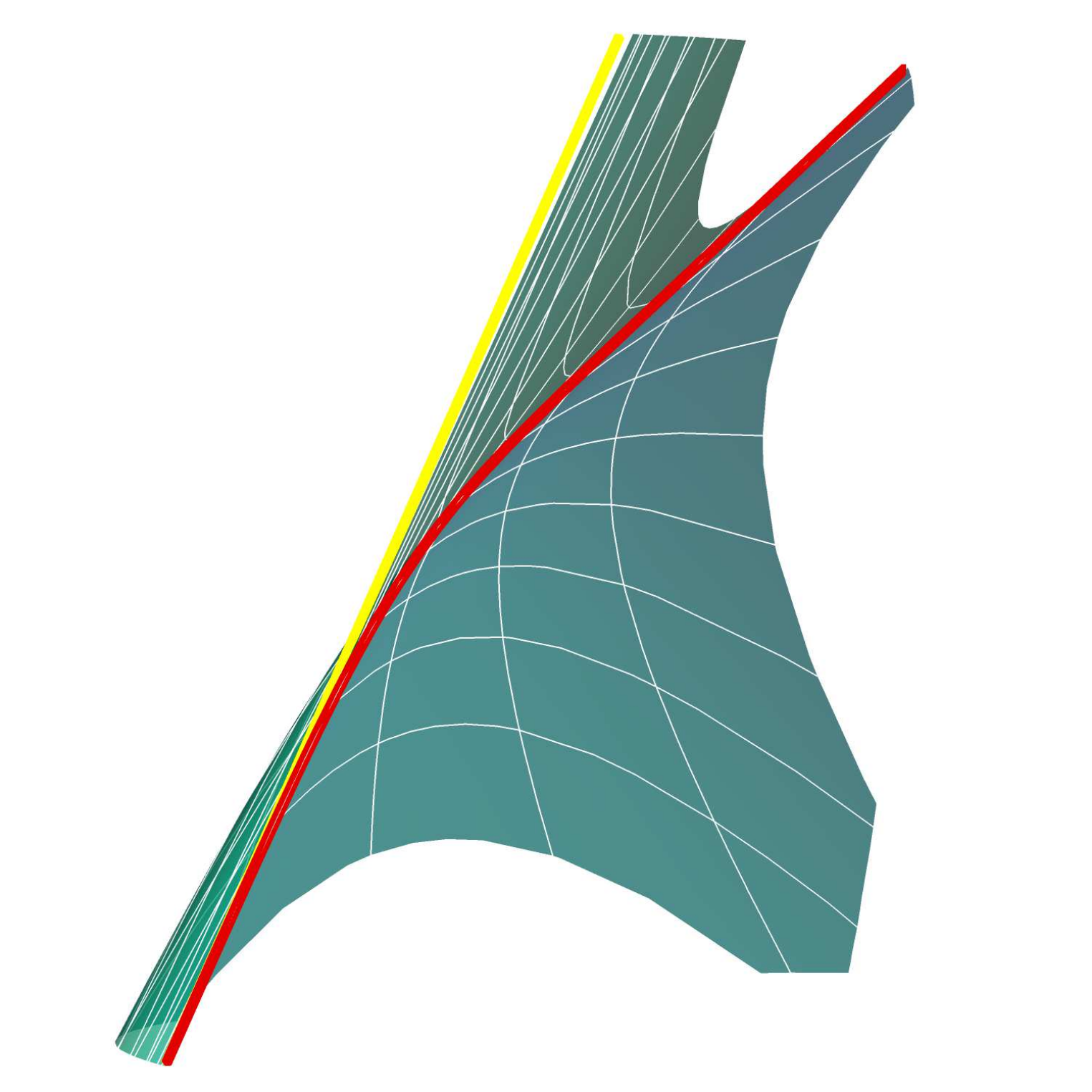}
			\vfill
		}
		\caption{\textup{B\textsubscript{L2}}}
		\label{fig:lightlikeline}
	\end{subfigure}
	\caption{Timelike minimal Bonnet-type surfaces. As in Remark \ref{rema:global}, we treat these as generalized timelike minimal surfaces, admitting singularities, where the singularities on these surfaces are highlighted.}
	\label{fig:singularities}
\end{figure}

\section{Null curves of timelike minimal surfaces with planar curvature lines}\label{Sec.3}
In this section, we consider timelike minimal surfaces with planar curvature lines in terms of their generating null curves (see Fact \ref{fact:Weierstrass2}). First, we introduce the theory of null curves in $\mathbb{R}^{2,1}$. For an in-depth discussion of the theory of null curves, we refer the readers to works such as \cite{vessiot_sur_1905, bonnor_null_1969, ferrandez_null_2001, inoguchi_null_2008, lopez_differential_2014, olszak_note_2015}.

\subsection{Frenet-Serret type formula for non-degenerate null curves}\label{sec1}

A regular curve $\gamma=\gamma(t)\colon I\rightarrow \mathbb{R}^{2,1}$ is called a \emph{null curve} if 
	\[
		\langle \gamma', \gamma' \rangle=0, 
	\]
and $\gamma$ is said to be \emph{non-degenerate} if $\gamma'$ and $\gamma''$ are linearly independent at each point on $I$. (Here, $'$ denotes  $\frac{\dif}{\dif t}$.)
For a non-degenerate null curve $\gamma(t)$, we can normalize (see {\cite[Section $2$]{olszak_note_2015}, for example) the parameter so that 
	\begin{equation}\label{eq:psudo-arc}
		\langle \ddot\gamma(s), \ddot\gamma(s)\rangle=1,
	\end{equation}
where $\dot{\phantom{s}}$ denotes $\frac{\dif}{\dif s}$.
A parameter $s$ satisfying \eqref{eq:psudo-arc} is called the \textit{pseudo-arclength parameter}, introduced in \cite{bonnor_null_1969}.
From now on, let $s$ denote a pseudo-arclength parameter.
If we take the vector fields
	\[
		\vect{\sigma}(s) := \dot\gamma(s),\quad
		\vect{e}(s) := \ddot\gamma(s),
	\]
and then there is a unique null vector field $\vect{n}$ such that
	\[
		\langle \vect{n}, \vect{\sigma} \rangle=-2,\quad
		\langle\vect{n}, \vect{e}\rangle=0.
	\]
If we set the \emph{lightlike curvature} (see \cite[p.47]{inoguchi_null_2008}) of $\gamma$ to be
	\begin{equation}\label{eq:nullcurvature}
		\kappa_\gamma(s) := -\inner{\dot{\vect{n}}(s)}{\vect{e}(s)},
	\end{equation}
we get
	\[
		- \dot{\vect{n}} = \kappa_\gamma \vect{e},\qquad
		\dot{\vect{e}} = -\frac{\kappa_\gamma}{2} \vect{\sigma} + \frac1{2} \vect{n}.
	\]
Therefore, we obtain the following Frenet-Serre type formula for non-degenerate null curves.

\begin{prop}[cf.~\cite{inoguchi_null_2008, lopez_differential_2014}]\label{prop:Frenet}
	For a non-degenerate null curve $\gamma$ parametrized by pseudo-arclength parameter, the null frame $\mathcal{F}:=\{\vect{\sigma}, \vect{e}, \vect{n}\}$ satisfies
		\[
			\mathcal{F}' = \mathcal{F}
				\begin{pmatrix}
					0 & -\kappa_\gamma/2 & 0 \\
					1 & 0 & -\kappa_\gamma \\
					0 & 1/2 & 0
				\end{pmatrix}.
		\]
	Moreover, the lightlike curvature $\kappa_\gamma$ of $\gamma$ is written as
		\begin{equation}\label{eq:pseudo_arc}
			\kappa_\gamma =\langle \dddot\gamma, \dddot\gamma\rangle.
		\end{equation}
\end{prop}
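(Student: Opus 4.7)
The plan is to set up the standard trick for null frames: write each derivative $\dot{\vect{\sigma}}$, $\dot{\vect{e}}$, $\dot{\vect{n}}$ as a linear combination in the basis $\{\vect{\sigma}, \vect{e}, \vect{n}\}$, and then determine all coefficients by pairing with $\vect{\sigma}$, $\vect{e}$, $\vect{n}$ using the defining inner-product relations. From the definitions $\vect{\sigma} = \dot\gamma$ and $\vect{e} = \ddot\gamma$, the first column of the Frenet-Serret matrix is immediate since $\dot{\vect{\sigma}} = \vect{e}$.

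Before analyzing $\dot{\vect{e}}$ and $\dot{\vect{n}}$, I would first assemble the full table of scalar products of $\vect{\sigma}, \vect{e}, \vect{n}$. Three of them are given ($\langle \vect{\sigma}, \vect{\sigma}\rangle = 0$, $\langle \vect{e}, \vect{e}\rangle = 1$, and the normalizations $\langle \vect{n}, \vect{\sigma}\rangle = -2$, $\langle \vect{n}, \vect{e}\rangle = 0$), one is forced by declaring $\vect{n}$ to be null ($\langle \vect{n}, \vect{n}\rangle = 0$), and the remaining one $\langle \vect{\sigma}, \vect{e}\rangle = 0$ comes from differentiating $\langle \vect{\sigma}, \vect{\sigma}\rangle = 0$. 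Once this is in hand, all coefficients in $\dot{\vect{e}} = a\vect{\sigma} + b\vect{e} + c\vect{n}$ and $\dot{\vect{n}} = a'\vect{\sigma} + b'\vect{e} + c'\vect{n}$ are recovered by pairing with the frame and then differentiating the corresponding scalar product identity: for example $\langle \dot{\vect{e}}, \vect{\sigma}\rangle = -\langle \vect{e}, \dot{\vect{\sigma}}\rangle = -1$ yields $c = 1/2$, $\langle \dot{\vect{e}}, \vect{e}\rangle = 0$ yields $b = 0$, and the defining equation \eqref{eq:nullcurvature} plus $\langle \vect{e}, \vect{n}\rangle = 0$ yield $\langle \dot{\vect{e}}, \vect{n}\rangle = \kappa_\gamma$, hence $a = -\kappa_\gamma/2$. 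The analogous computation for $\dot{\vect{n}}$ gives $a' = c' = 0$ and $b' = -\kappa_\gamma$, producing exactly the claimed matrix.

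For the second assertion, I would simply substitute $\dddot\gamma = \dot{\vect{e}}$ (just established) into $\langle \dddot\gamma, \dddot\gamma\rangle$. Using $\dddot\gamma = -\frac{\kappa_\gamma}{2}\vect{\sigma} + \frac{1}{2}\vect{n}$, all self-pairings vanish because $\vect{\sigma}$ and $\vect{n}$ are both null, and the cross term reduces to $-\frac{\kappa_\gamma}{2} \cdot \langle \vect{\sigma}, \vect{n}\rangle = \kappa_\gamma$, giving the formula $\kappa_\gamma = \langle \dddot\gamma, \dddot\gamma\rangle$.

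There is essentially no obstacle here: the only delicate point is making sure the auxiliary vector $\vect{n}$ really is uniquely determined by the two conditions $\langle \vect{n}, \vect{\sigma}\rangle = -2$ and $\langle \vect{n}, \vect{e}\rangle = 0$ together with the imposed nullity $\langle \vect{n}, \vect{n}\rangle = 0$; this is a short linear-algebra check using non-degeneracy of $\gamma$ (which guarantees that $\vect{\sigma}, \vect{e}$ are linearly independent, so that the orthogonal complement of $\vect{e}$ inside the Lorentzian $3$-space meets the light cone in a line transverse to $\mathbb{R}\vect{\sigma}$). I would state this uniqueness first so that all subsequent manipulations of $\vect{n}$ are unambiguous, and then the rest is bookkeeping.
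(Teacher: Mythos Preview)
Your proposal is correct and follows the same approach the paper takes: the paper records the two relations $-\dot{\vect{n}} = \kappa_\gamma \vect{e}$ and $\dot{\vect{e}} = -\tfrac{\kappa_\gamma}{2}\vect{\sigma} + \tfrac{1}{2}\vect{n}$ in the text immediately preceding the proposition (without writing out the coefficient-chasing), then states the proposition as a known result with citations. Your write-up simply supplies the omitted details of that same computation, and your verification of $\kappa_\gamma = \langle \dddot\gamma, \dddot\gamma\rangle$ by expanding $\dot{\vect{e}}$ in the null frame is exactly what the paper intends.
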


\begin{example}\label{ex:const_curvature}
A non-degenerate null curve parametrized by pseudo-arclength with constant lightlike curvature $\kappa_\gamma$ is called a \emph{null helix} in \cite{ferrandez_null_2001,inoguchi_null_2008}, and such curves have been studied by many authors.
Any null helix $\gamma$ is congruent to one of the following:
\[
	\begin{cases}
		\gamma(s) = \frac{1}{\kappa_\gamma}\left(\cos{(cs)}, \sin{(cs)}, cs \right), &\text{ when $\kappa_\gamma=c^2 > 0$},\\
		\gamma(s) = \left(\frac{s^2}{2},-\frac{s^3}{6}+\frac{s}{2}, \frac{s^3}{6}+\frac{s}{2} \right), &\text{ when $\kappa_\gamma= 0$}, \\
		\gamma(s) = \frac{1}{\kappa_\gamma}\left(cs, \cosh{(cs)}, \sinh{(cs)} \right), &\text{ when $\kappa_\gamma=-c^2 < 0$}.
	\end{cases}
\]
\end{example}

\subsection{Characterization of timelike minimal surfaces with planar curvature lines}

Using the theory of null curves, we now characterize timelike minimal surfaces with planar curvature lines in terms of its generating null curves.
We first remark on the relationship between the generating null curves and the normalization of the Hopf differential factor.

\begin{lemm}[cf.\ p.\ 347 of \cite{inoguchi_timelike_2004}]\label{lemma:normalization}
The normalization of the Hopf differential factor $\phoro{q}=-\frac{1}{2}$ of a timelike minimal surface $F$ implies that the generating null curves are parametrized by pseudo-arclength.
\end{lemm}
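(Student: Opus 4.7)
The plan is to exploit the decomposition $F(u,v) = (\alpha(u) + \beta(v))/2$ to translate the Hopf pair data directly into pseudo-arclength data for $\alpha$ and $\beta$. Since the statement concerns symmetric roles of $\alpha$ and $\beta$, I will argue the case of $\alpha$ in detail; the case of $\beta$ will be identical after swapping $u \leftrightarrow v$.

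First I would observe that the normalization $\mathtt{q} = -\tfrac{1}{2}$ combined with the formula $\mathtt{q} = \tfrac{Q+R}{2} + j \tfrac{Q-R}{2}$ forces $Q \equiv R \equiv -\tfrac{1}{2}$, so the task reduces to showing that $\langle F_{uu}, N \rangle \equiv -\tfrac{1}{2}$ implies $\langle \alpha'', \alpha'' \rangle \equiv 1$ (and analogously for $R$ and $\beta$). From $F_u = \tfrac{1}{2}\alpha'$ and $F_{uu} = \tfrac{1}{2}\alpha''$ this amounts to determining the $\langle \cdot,\cdot\rangle$-length of $\alpha''$ from the single scalar projection $\langle \alpha'', N \rangle = -1$.

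The key geometric step is to locate $\alpha''$ in an explicit two-dimensional subspace. Differentiating $\langle \alpha', \alpha' \rangle = 0$ gives $\alpha'' \perp \alpha'$. Since $\alpha' \propto F_u$ is null, its orthogonal complement in $\mathbb{R}^{2,1}$ is two-dimensional and contains $\alpha'$ itself; because $N \perp F_u$ and $N \notin \operatorname{span}(F_u)$, this orthogonal complement is precisely $\operatorname{span}(F_u, N)$. Writing $\alpha'' = a F_u + b N$, the orthogonality relations $\langle F_u, F_u \rangle = 0 = \langle F_u, N \rangle$ and $\langle N, N \rangle = 1$ collapse most cross terms, yielding $\langle \alpha'', N \rangle = b$ and $\langle \alpha'', \alpha'' \rangle = b^2$. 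Then $Q = \tfrac{b}{2} = -\tfrac{1}{2}$ forces $b = -1$, and thus $\langle \alpha'', \alpha'' \rangle = 1$, which is exactly the pseudo-arclength condition \eqref{eq:psudo-arc}.

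There is no real obstacle here; the argument is essentially linear algebra on the degenerate span of a null tangent vector and the unit normal, and the only thing worth being careful about is the sign convention relating the Hopf pair $(Q,R)$ to the paracomplex Hopf differential $\mathtt{q}$. The proof ends by repeating the same computation with $F_{vv} = \tfrac{1}{2}\beta''$ and $R = -\tfrac{1}{2}$ to conclude that $v$ is a pseudo-arclength parameter for $\beta$.
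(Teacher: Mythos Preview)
Your proof is correct and follows essentially the same approach as the paper: both arguments decompose $\alpha'' = 2F_{uu}$ in the degenerate plane $\operatorname{span}(F_u, N)$ and then compute $\langle \alpha'', \alpha'' \rangle = \langle N, N \rangle = 1$. The only difference is that the paper invokes the Gauss--Weingarten equations in null coordinates to write the decomposition explicitly as $\alpha_{uu} = \tfrac{2\rho_u}{\rho}\alpha_u - N$, whereas you extract only the $N$-component via the linear-algebraic observation $(\alpha')^\perp = \operatorname{span}(F_u, N)$; since the tangential coefficient is irrelevant for the norm, your shortcut loses nothing.
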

\begin{proof}
Let $F$ be represented via two generating null curves $\alpha(u)$ and $\beta(v)$ as in \eqref{eq:null_decomp}.
By the Gauss-Weingarten equations, we have
	\[
		\alpha_{uu}(=2F_{uu})=2\frac{\rho_u}{\rho}\alpha_u-N,\quad \beta_{vv}(=2F_{vv})=2\frac{\rho_v}{\rho}\beta_v-N,
	\]
where $\rho$ is the Lorentz conformal factor of the first fundamental form and $N$ is the unit normal of $F$.
Therefore, we can check that
	\[
		\langle \alpha_{uu},\alpha_{uu} \rangle =\langle \beta_{vv},\beta_{vv} \rangle=\langle N,N \rangle=1,
	\]
i.e.\ $u$ and $v$ are pseudo-arclength parameters of $\alpha(u)$ and $\beta(v)$, respectively.
\end{proof}

Now we state and prove the theorem relating the lightlike curvatures of the generating null curves and the planar curvature line condition \eqref{eqn:pde2}.
\begin{theo}\label{thm:curvature_characterization2}
Away from flat points and singular points, a timelike minimal surface $F$ has planar curvature lines if and only if it has negative Gaussian curvature, and its generating null curves have the same constant lightlike curvature. 
\end{theo}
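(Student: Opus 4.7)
The plan is to express the lightlike curvatures $\kappa_\alpha$ and $\kappa_\beta$ directly in terms of the Lorentz conformal factor $\rho$, and then recognize the equality $\kappa_\alpha=\kappa_\beta$ as the planar curvature line condition $\rho_{xy}=0$ from the lemma preceding \eqref{eqn:pde}. Under the hypothesis $K<0$, isothermic coordinates exist and we may normalize $\phoro{q}=-\tfrac{1}{2}$, so by Lemma \ref{lemma:normalization} the generating null curves $\alpha(u)$ and $\beta(v)$ with $(u,v)=(x+y,x-y)$ are pseudo-arclength parametrized.

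Starting from the Gauss-Weingarten equations in null coordinates,
\[
	\alpha_{uu}=\frac{2\rho_u}{\rho}\alpha_u-N,\qquad
	N_u=\frac{1}{2\rho^2}\beta_v
\]
(together with the symmetric formulas for $\beta_{vv}$ and $N_v$), differentiating the first identity and substituting the second gives
\[
	\alpha_{uuu}=2\left[\left(\frac{\rho_u}{\rho}\right)_u+2\left(\frac{\rho_u}{\rho}\right)^2\right]\alpha_u-\frac{2\rho_u}{\rho}N-\frac{1}{2\rho^2}\beta_v.
\]
Taking the inner product with itself in the non-orthogonal null frame $\{\alpha_u,N,\beta_v\}$, the only non-vanishing contributions come from $\langle N,N\rangle=1$ and the coupling $\langle\alpha_u,\beta_v\rangle=4\langle F_u,F_v\rangle=2\rho^2$; after simplification \eqref{eq:pseudo_arc} yields
\[
	\kappa_\alpha=-\frac{4\rho_{uu}}{\rho},\qquad \kappa_\beta=-\frac{4\rho_{vv}}{\rho}.
\]
Converting via $\partial_u=\tfrac{1}{2}(\partial_x+\partial_y)$ and $\partial_v=\tfrac{1}{2}(\partial_x-\partial_y)$ gives $\kappa_\alpha-\kappa_\beta=-4\rho_{xy}/\rho$, so away from singular points the condition $\kappa_\alpha=\kappa_\beta$ is precisely $\rho_{xy}=0$, i.e.\ the planar curvature line condition.

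The necessity of $K<0$ in the forward direction is built into the setup, since on a timelike minimal surface the existence of real principal curvature lines forces the principal curvatures (hence $\phoro{q}$) to be real. To promote $\kappa_\alpha=\kappa_\beta$ to \emph{constancy}, I would substitute the explicit form of $\rho$ from Lemma \ref{lemm:solutionRho} into the formula above: in Case (1), writing $\rho_x=f(x)$, $\rho_y=g(y)$ and $\rho=(f^2-g^2+1)/(f_x-g_y)$, we obtain
\[
	\kappa_\alpha=-\frac{f_x+g_y}{\rho}=-\frac{f_x^2-g_y^2}{f^2-g^2+1}=d-c
\]
after using the ODEs $f_x^2=(c-d)f^2+c$ and $g_y^2=(c-d)g^2+d$; in Case (2), both $\rho_{uu}$ and $\rho_{vv}$ vanish so $\kappa_\alpha=\kappa_\beta=0$ trivially.

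The main obstacle is the careful inner-product bookkeeping in computing $\langle\alpha_{uuu},\alpha_{uuu}\rangle$: one must track the cross-coupling $\langle\alpha_u,\beta_v\rangle=2\rho^2$ correctly, as it is the only non-orthogonal pairing in the null frame and is responsible for cancelling the $\alpha_u$-term in $\alpha_{uuu}$. Once the clean expressions $\kappa_\alpha=-4\rho_{uu}/\rho$ and $\kappa_\beta=-4\rho_{vv}/\rho$ are in hand, the rest of the argument reduces to the lemma preceding \eqref{eqn:pde} together with Proposition \ref{prop:solutionFG}.
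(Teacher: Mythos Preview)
Your proof is correct and follows essentially the same route as the paper: both derive $\kappa_\alpha=-4\rho_{uu}/\rho$, $\kappa_\beta=-4\rho_{vv}/\rho$ from the Gauss--Weingarten equations (you via $\kappa=\langle\dddot\gamma,\dddot\gamma\rangle$, the paper via the null-frame pairing $\langle\vect{n}_\alpha,(\vect{e}_\alpha)_u\rangle$) and then identify $\kappa_\alpha=\kappa_\beta$ with $\rho_{uu}-\rho_{vv}=\rho_{xy}=0$. The one place you work harder than necessary is the constancy step: since $\kappa_\alpha$ is a function of $u$ alone and $\kappa_\beta$ of $v$ alone, the identity $\kappa_\alpha(u)=\kappa_\beta(v)$ on an open set already forces both to equal the same constant, so the detour through the explicit solutions of Lemma~\ref{lemm:solutionRho} is unnecessary---this is how the paper closes the argument in one line.
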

\begin{proof}
We consider a timelike minimal surface $F$ written as in \eqref{eq:null_decomp}, with its first fundamental form as in \eqref{eqn:firstFundamental}.
For $z = x + j y = (u+v)/2 + j(u-v)/2$, the planar curvature line condition \eqref{eqn:pde2} can be expressed as
\begin{equation} \label{eq:PCLcondition}
	\rho_{uu} - \rho_{vv}=0.
\end{equation}

Let us take the null frames $\mathcal{F}_\alpha:=\{\vect{\sigma}_\alpha, \vect{e}_\alpha, \vect{n}_\alpha\}$ for $\alpha$ and $\mathcal{F}_\beta:=\{\vect{\sigma}_\beta, \vect{e}_\beta, \vect{n}_\beta\}$ for $\beta$ as in Section \ref{sec1}.
By using the frame of the surface $F$, we can check that $\vect{n}_\alpha$ and $\vect{n}_\beta$ are written as
	\begin{equation}\label{eq:n'}
		\vect{n}_\alpha=\left(\frac{2\rho_u}{\rho}\right)^2\alpha_u-\frac{1}{\rho^2}\beta_v-4\frac{\rho_u}{\rho}N,\quad
		\vect{n}_\beta=-\frac{1}{\rho^2}\alpha_u +\left(\frac{2\rho_v}{\rho}\right)^2\beta_v-4\frac{\rho_v}{\rho}N.
	\end{equation}
Since we normalized the Hopf differential factor as $\phoro{q}=-\frac{1}{2}$, we have that $u$ and $v$ are pseudo-arclength parameters of $\alpha$ and $\beta$ by Lemma \ref{lemma:normalization}; hence, we can take
	\[
		\vect{e}_\alpha=\alpha_{uu} \quad\text{and}\quad \vect{e}_\beta=\beta_{vv}.
	\]
By the Gauss-Weingarten equations, ${(\vect{e}_\alpha)}_u$ and ${(\vect{e}_\beta)}_v$ can be expressed as
\begin{equation}\label{eq:e'}
	\begin{aligned}
		{(\vect{e}_\alpha)}_u &= 2\frac{\rho_{uu}\rho+{\rho_u}^2}{\rho^2}\alpha_u-\frac{1}{2\rho^2}\beta_v-2\frac{\rho_u}{\rho}N \\
		{(\vect{e}_\beta)}_v &= -\frac{1}{2\rho^2}\alpha_u +2\frac{\rho_{vv}\rho+{\rho_v}^2}{\rho^2}\beta_v-2\frac{\rho_v}{\rho}N.
	\end{aligned}
\end{equation}
By \eqref{eq:nullcurvature}, \eqref{eq:n'} and \eqref{eq:e'}, the lightlike curvatures $\kappa_\alpha$ and $\kappa_\beta$ of the generating null curves $\alpha$ and $\beta$ can be calculated as
	\begin{equation}\label{eq:null_curvatures}
		\kappa_\alpha=\langle \vect{n}_\alpha, (\vect{e}_\alpha)_u\rangle=-4\frac{\rho_{uu}}{\rho},\quad
		\kappa_\beta=\langle \vect{n}_\beta, (\vect{e}_\beta)_v\rangle=-4\frac{\rho_{vv}}{\rho},
	\end{equation}
and hence 
	\[
		\kappa_\alpha(u)-\kappa_\beta(v)=-4\frac{\rho_{uu}-\rho_{vv}}{\rho}.
	\]
Therefore, we conclude that $\kappa_\alpha$ are $\kappa_\beta$ are the same constant if and only if the Lorentz conformal factor $\rho$ satisfies the planar curvature line condition \eqref{eq:PCLcondition}.
\end{proof}


\subsection{Deformations of null curves with constant lightlike curvature}\label{sect:deformationnull}
We now consider continuous deformations of null curves preserving their pseudo-arclength parametrization and constancy of lightlike curvatures.
As in \cite{cho_deformation_2017, cho_deformation_2018}, we consider a deformation to be ``continuous'' with respect to a parameter if the deformation dependent on the parameter converges uniformly over compact subdomains component by component.
First, we introduce how the lightlike curvatures of generating null curves are determined for a timelike minimal surface with planar curvature lines.

\begin{prop}
	The lightlike curvatures $\kappa_\alpha$ and $\kappa_\beta$ of the generating null curves of a timelike minimal  surface with planar curvature lines are given by the constants $c$ and $d$ in \eqref{eqn:ode} via
	\begin{equation}\label{eq:c,d,kappa}
		\kappa_\alpha=\kappa_\beta = d-c.
	\end{equation}
\end{prop}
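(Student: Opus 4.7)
The plan is to translate the claim into an identity about the Lorentz conformal factor $\rho$ using the formulas already derived in the proof of Theorem \ref{thm:curvature_characterization2}, and then evaluate it via the explicit ODE system of Lemma \ref{lemm:solutionRho}.

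First I would recall from \eqref{eq:null_curvatures} that $\kappa_\alpha = -4\rho_{uu}/\rho$ and $\kappa_\beta = -4\rho_{vv}/\rho$, expressed in the null coordinates $(u,v) = (x+y,\,x-y)$. Converting to the Lorentz conformal coordinates $(x,y)$, one computes
\[
	\rho_{uu} = \tfrac{1}{4}(\rho_{xx} + 2\rho_{xy} + \rho_{yy}), \qquad \rho_{vv} = \tfrac{1}{4}(\rho_{xx} - 2\rho_{xy} + \rho_{yy}).
\]
Since by the planar curvature line condition \eqref{eqn:pde2} we have $\rho_{xy} = 0$, both expressions collapse to $\tfrac{1}{4}(\rho_{xx} + \rho_{yy})$; this already makes visible that $\kappa_\alpha = \kappa_\beta$, consistently with Theorem \ref{thm:curvature_characterization2}. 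It remains to identify the common value with $d-c$.

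For this, I would use \eqref{eqn:recover}, which gives $\rho_{xx} = f_x(x)$ and $\rho_{yy} = g_y(y)$, so the task reduces to showing $f_x + g_y = (c-d)\rho$. The cleanest route is to differentiate the closed form \eqref{eqn:solutionRho1}, written as
\[
	\rho\,(f_x - g_y) = f^2 - g^2 + 1,
\]
with respect to $x$. Using $\rho_x = f$ from \eqref{eqn:recover1} together with the ODE $f_{xx} = (c-d)f$ from \eqref{eqn:ode2}, the resulting identity simplifies to $f\bigl(f_x + g_y - (c-d)\rho\bigr) = 0$. On the open subset where $f \not\equiv 0$ this yields the desired relation; by the symmetric argument using the $y$-derivative together with \eqref{eqn:ode4}, the same relation holds on the open subset where $g \not\equiv 0$, and the degenerate cases (where both $f$ and $g$ are identically zero) fall under Case (2) of Lemma \ref{lemm:solutionRho}, which we treat separately.

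Combining these computations yields
\[
	\kappa_\alpha = \kappa_\beta = -\frac{\rho_{xx} + \rho_{yy}}{\rho} = -\frac{f_x + g_y}{\rho} = -(c-d) = d-c,
\]
as claimed. I do not anticipate a serious obstacle; the only point requiring care is the case analysis when $f \equiv 0$ or $g \equiv 0$, but these are handled by using the partner ODE or by noting that such exceptional solutions correspond to configurations already excluded or separately covered by Proposition \ref{prop:solutionFG}.
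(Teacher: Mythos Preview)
Your proposal is correct and follows essentially the same route as the paper: reduce via \eqref{eq:null_curvatures} and the coordinate change to the identity $f_x + g_y = (c-d)\rho$, then invoke the ODE system of Lemma \ref{lemm:solutionRho}. The only difference is that the paper uses the first-order relations \eqref{eqn:ode1} and \eqref{eqn:ode3} directly---subtracting them gives $(f_x)^2 - (g_y)^2 = (c-d)(f^2 - g^2 + 1) = (c-d)(f_x - g_y)\rho$, hence $f_x + g_y = (c-d)\rho$ purely algebraically---which sidesteps the case analysis on the vanishing of $f$ and $g$ that your differentiation-based argument via \eqref{eqn:ode2} and \eqref{eqn:ode4} requires.
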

\begin{proof}
By \eqref{eq:null_curvatures}, we have
	\[
		\kappa_\alpha=\kappa_\beta =-2\frac{\rho_{uu}+\rho_{vv}}{\rho} =-\frac{\rho_{xx}+\rho_{yy}}{\rho}.
	\]
Using \eqref{eqn:ode1} and \eqref{eqn:ode3}, we prove the desired relation.
\end{proof}

A deformation of a timelike minimal surface with planar curvature lines corresponds to a deformation of its generating null curves, which have constant lightlike curvatures.
Therefore, by using the relation \eqref{eq:c,d,kappa}, we can deform null curves with constant curvature preserving the pseudo-arclength parametrization and the constancy of lightlike curvature (each of the null curves may have different constant lightlike curvature). 

As an example, we give a deformation of null curves coming from the surfaces in case (1a).
To do this, we first consider a slightly modified method of the one we used to obtain the Weierstrass data \eqref{eqn:wData1} in Section \ref{sssec:sheet1}.
Since $c \geq 0$ and $d \geq 0$, we define $\gamma$ and $\delta$ so that $\gamma^2 = c$ and $\delta^2 = d$.
Let $\gamma = 2^{1/4} \cos{\hat{c}_1}$ and $\delta = 2^{1/4} \sin{\hat{c}_1}$ for $\hat{c}_1 \in \left(-\frac{\pi}{4}, \frac{3\pi}{4}\right)$ (see Figure \ref{fig:pathD1}).
\begin{figure}
	\includegraphics[width=0.3\textwidth]{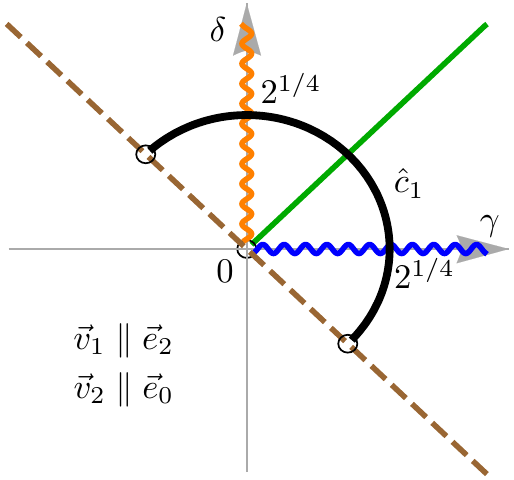}
	\caption{Modified version of a path in case (1a) to include the timelike plane in the deformation. For the meaning of the diagram, see Figure \ref{fig:bifurcation}.}
	\label{fig:pathD1}
\end{figure}

Then we can calculate similarly as before to obtain that
	\begin{equation}\label{eqn:wDataDeform1}
		\begin{aligned}
			h_{\textup{P}}^{\hat{c}_1}(z) &= \begin{cases}
					\mathrlap{\frac{\sqrt{\cos{(2 \hat{c}_1)}}}{\cos \hat{c}_1 - \sin \hat{c}_1} \ptanh\left(\frac{\sqrt{\cos{(2 \hat{c}_1)}}}{2^{3/4}} \,z\right),}\hphantom{\frac{1}{2^{1/4}(\cos{\hat{c}_1} + \sin{\hat{c}_1)}} \pcos^2 \left(\frac{\sqrt{-\cos{(2 \hat{c}_1)}}}{2^{3/4}} \,z\right),} &\text{if $\hat{c}_1 \in (-\frac{\pi}{4},\frac{\pi}{4})$},\\
					\frac{1}{2^{1/4}}z, & \text{if $\hat{c}_1 = \frac{\pi}{4}$},\\
					-\frac{\sqrt{-\cos{(2 \hat{c}_1)}}}{\cos \hat{c}_1 - \sin \hat{c}_1} \ptan\left(\frac{\sqrt{-\cos{(2 \hat{c}_1)}}}{2^{3/4}} \,z\right), & \text{if $\hat{c}_1 \in (\frac{\pi}{4},\frac{3\pi}{4})$},
				\end{cases}\\
			\eta_{\textup{P}}^{\hat{c}_1}(z) &= \begin{cases}
					\frac{1}{2^{1/4}(\cos{\hat{c}_1} + \sin{\hat{c}_1)}} \pcosh^2 \left(\frac{\sqrt{\cos{(2 \hat{c}_1)}}}{2^{3/4}} \,z\right), &\text{if $\hat{c}_1 \in (-\frac{\pi}{4},\frac{\pi}{4})$},\\
					\frac{1}{2^{3/4}}, & \text{if $\hat{c}_1 = \frac{\pi}{4}$},\\
					\frac{1}{2^{1/4}(\cos{\hat{c}_1} + \sin{\hat{c}_1)}} \pcos^2 \left(\frac{\sqrt{-\cos{(2 \hat{c}_1)}}}{2^{3/4}} \,z\right), &\text{if $\hat{c}_1 \in (\frac{\pi}{4},\frac{3\pi}{4})$}.
				\end{cases}
		\end{aligned}
	\end{equation}
\begin{rema}
By noticing that $\gamma^2 = 2^{-3/2}(4\cos^2{\hat{c}_1})$ and $\delta^2 = 2^{-3/2}(4\sin^2{\hat{c}_1})$, and the fact that a homothety in the $(c,d)$-plane amounts to a homothety in the $(x,y)$-plane, one can also get the parameromorphic data $h_{\textup{P}}^{\hat{c}_1}(z)$ of \eqref{eqn:wDataDeform1} from that of \eqref{eqn:wData1} by applying a homothety change in the domain $z \mapsto 2^{-3/4} z$.
\end{rema}

Now to get the parametrization, let $F_{\textup{P}}^{\hat{c}_1} (x,y)$ be defined from $(h_{\textup{P}}^{\hat{c}_1}, \eta_{\textup{P}}^{\hat{c}_1} \dif z)$ via the Weierstrass-type representation in Fact \ref{fact:Weierstrass1}.
We define
	\begin{equation}\label{eqn:sheet1Deformation}
		\hat{F}_{\textup{P}}^{\hat{c}_1} (x,y) = R^{\hat{c}_1}\left(F_{\textup{P}}^{\hat{c}_1} (x,y) - F_{\textup{P}}^{\hat{c}_1} (0,0)\right),
	\end{equation}
where
	\begin{equation}\label{eqn:homothety}
		R^{\hat{c}_1} = \left(1- \sin{\left(\hat{c}_1 + \tfrac{\pi}{4}\right)}\right)\left| \cos{2 \hat{c}_1} \right| +  \sin{\left(\hat{c}_1 + \tfrac{\pi}{4}\right)}.
	\end{equation}
A straightforward calculation then shows that
	\begin{gather*}
		\lim_{\hat{c}_1 \to \tfrac{\pi}{4}} \hat{F}_{\textup{P}}^{\hat{c}_1} (2^{1/4}x, 2^{1/4}y) = \tfrac{1}{\sqrt{2}}\left( x^2 + y^2, \, x - x y^2 - \tfrac{1}{3} x^3 , \, - y - x^2 y - \tfrac{1}{3} y^3\right),\\
		\lim_{\hat{c}_1 \searrow -\tfrac{\pi}{4}} \hat{F}_{\textup{P}}^{\hat{c}_1} (x,y) = \left( 0, \, \tfrac{3}{2^{3/4}} x, -\tfrac{3}{2^{3/4}} y \right) = \lim_{\hat{c}_1 \nearrow \tfrac{3\pi}{4}} \hat{F}_{\textup{P}}^{\hat{c}_1} (x,y),
	\end{gather*}
implying that $\hat{F}_{\textup{P}}^{\hat{c}_1} (x,y)$ for $\hat{c}_1 \in \left[-\frac{\pi}{4},\frac{3\pi}{4}\right]$ gives a continuous deformation consisting of every surface in case (1a), including the timelike minimal Enneper-type surface and the timelike plane.

To obtain a deformation of null curves from the surface, let us now take $A_1=\sqrt{\cos{2\hat{c}_1}}$.
After applying a suitable homothety to the domain, the generating null curves of the surfaces in the case (1a) discussed in \eqref{eqn:sheet1Deformation} are written as
\begin{align*}
	\alpha^{\hat{c}_1}(s) &= \left(\tfrac{\sinh^2{(A_1s)}}{2A_1^2},\tfrac{2 A_1 s \cos{\hat{c}_1} - \sin{\hat{c}_1}\sinh{(2A_1s)}}{4A_1^3},\tfrac{2 A_1 s \sin{\hat{c}_1} - \cos{\hat{c}_1}\sinh{(2A_1s)}}{4A_1^3}\right),\\
	\beta^{\hat{c}_1}(s) &= \alpha^{\hat{c}_1}(s)\cdot
			\left(\begin{smallmatrix}
				1 & 0 & 0\\
				0 & 1 & 0\\
				0 & 0 & -1
			\end{smallmatrix}\right),
\end{align*}
i.e.\
	\[
		\tfrac{1}{2}(\alpha^{\hat{c}_1}(u) + \beta^{\hat{c}_1}(v)) = \tfrac{1}{2^{3/2} R^{\hat{c}_1}}\hat{F}_{\textup{P}}^{\hat{c}_1} \left(\tfrac{2^{3/4}}{2}(u + v), \tfrac{2^{3/4}}{2}(u - v)\right).
	\]
Note that although $A_1$ is zero at $\hat{c}_1=\frac{\pi}{4}$ and may have complex values, $\alpha^{\hat{c}_1}$ are well-defined non-degenerate null curves for all $\hat{c}_1 \in \left(-\frac{\pi}{4},\frac{3\pi}{4}\right)$.
By Lemma \ref{lemma:normalization}, $s$ is a pseudo-arclength parameter for each $\alpha^{\hat{c}_1}$, and the curves have constant curvature $-4\cos{(2\hat{c}_1)}$.
Moreover, if we apply the scaling factor of the ambient space $R^{\hat{c}_1}$, then we can deform $\alpha^{\hat{c}_1}$ to a lightlike line by considering the directional limit as $\hat{c}_1$ tends to $-\frac{\pi}{4}$ or $\frac{3\pi}{4}$, see Figure \ref{fig:path1_null}.

\begin{figure}
	\centering
	\begin{minipage}{0.495\textwidth}
		\includegraphics[width=\textwidth]{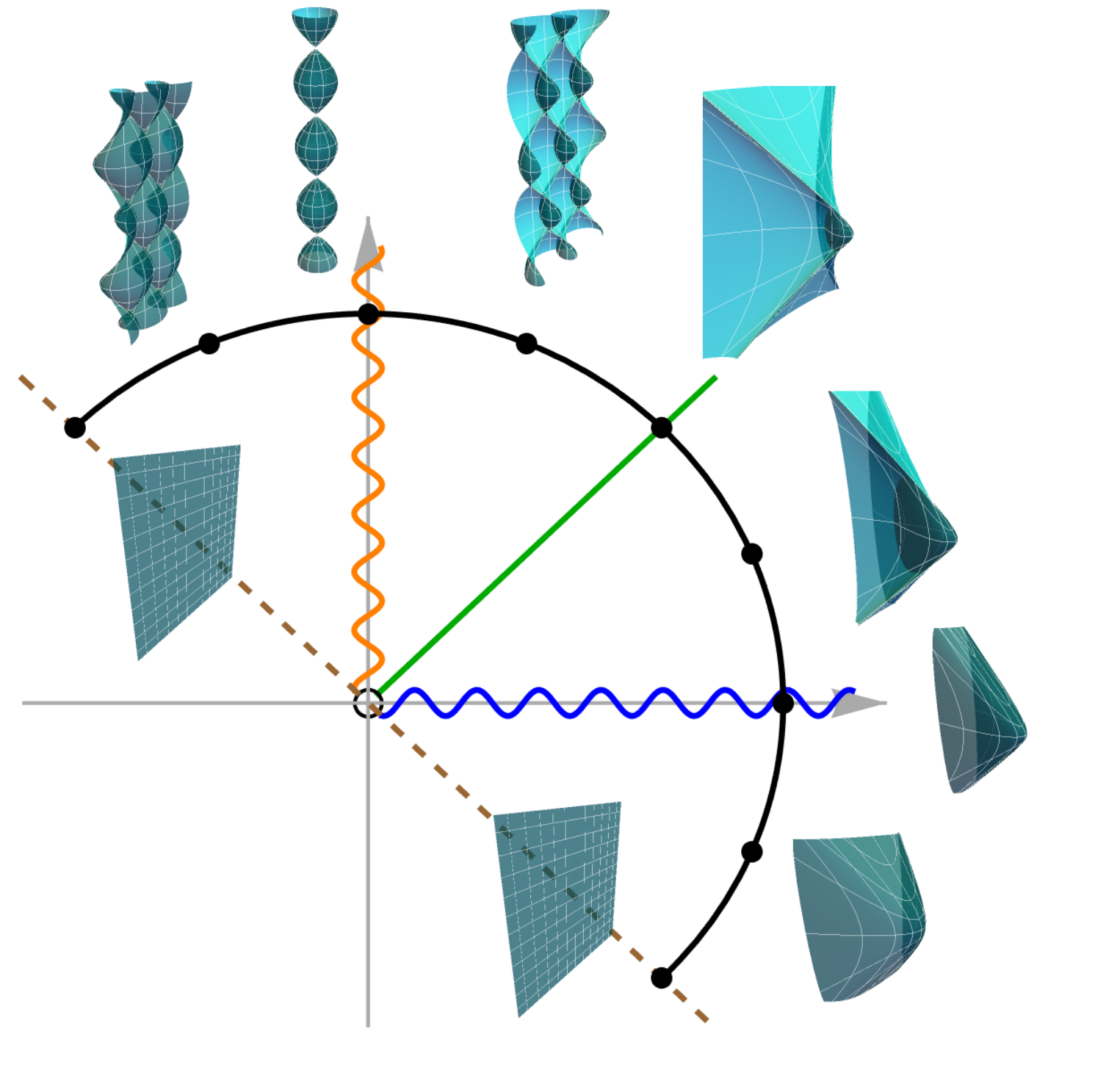}
	\end{minipage}
	\begin{minipage}{0.495\textwidth}
		\includegraphics[width=\textwidth]{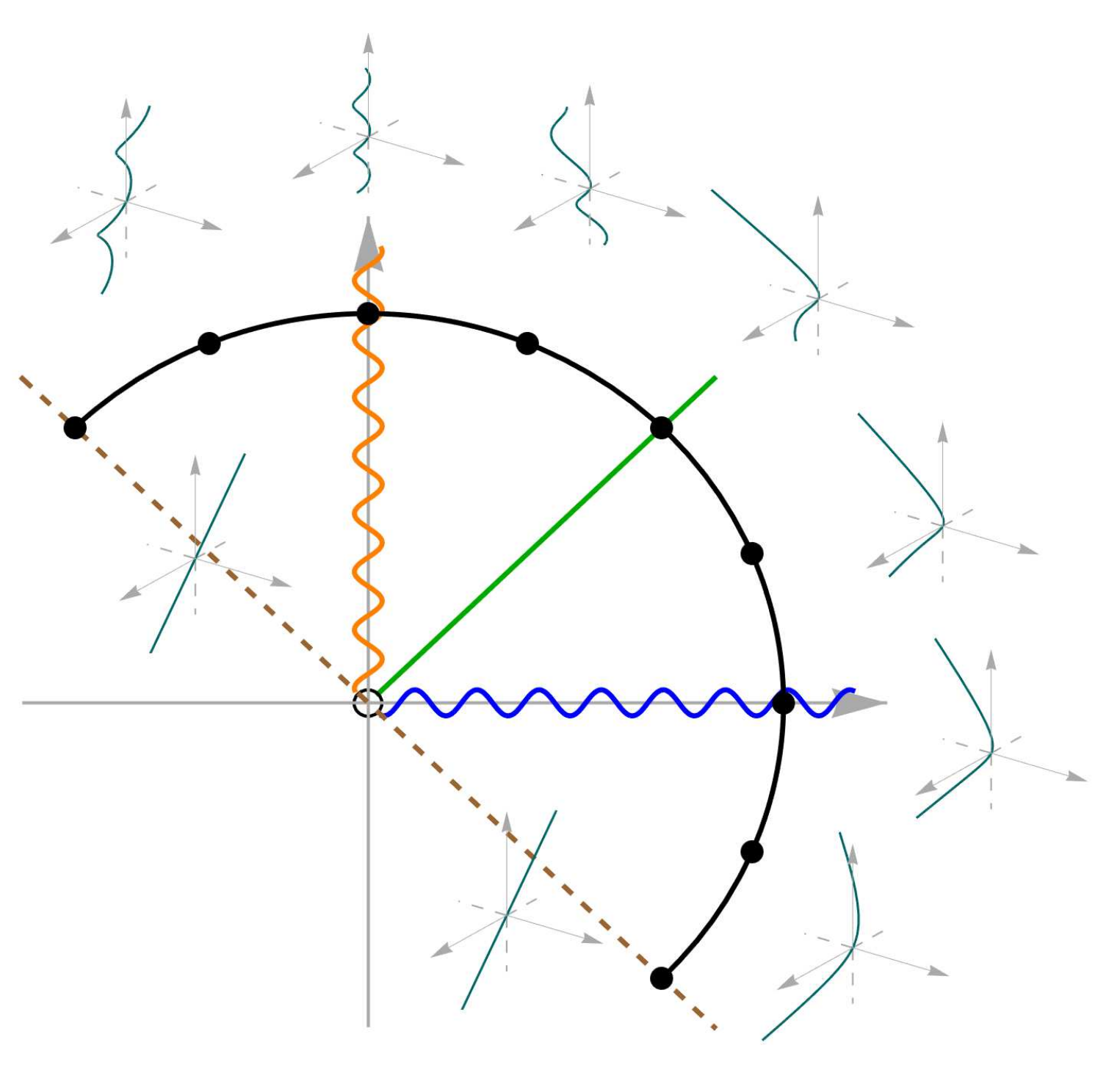}
	\end{minipage}
	\caption{Deformation of null curves with constant lightlike curvature, with their respective surfaces. For the meaning of the diagram, see Figure \ref{fig:bifurcation}.}
	\label{fig:path1_null}
\end{figure}


\section{Geometric characterization of timelike Thomsen surfaces}
Thomsen showed in \cite{thomsen_uber_1923} that the two classes minimal surfaces, those with planar curvature lines, and those that are also affine minimal, called \emph{Thomsen surfaces}, have a striking relationship; namely, they are conjugate minimal surfaces of each other.
Manhart showed in \cite{manhart_bonnet-thomsen_2015} that the analogous result holds for maximal surfaces in $\mathbb{R}^{2,1}$.
In this section, we investigate the relationship between the two classes of timelike minimal surfaces, those with planar curvature lines and those that are also affine minimal.

\subsection{The affine minimal condition -- revisited}\label{sec:4}
A timelike minimal surface which is also affine minimal is called a \emph{timelike Thomsen surface}, defined by Magid in \cite{magid_timelike_1991-1}, who proved the following by applying a result by Manhart \cite{manhart_affinminimalruckungsflachen_1985}. 
	\begin{fact}[\cite{magid_timelike_1991-1}, cf. \cite{manhart_affinminimalruckungsflachen_1985}]\label{thm:Magid}
		Away from flat points, a timelike minimal surface $F$ is affine minimal if and only if on the null coordinates $(u,v)$, there exist functions $\theta=\theta(u)$ and $\vartheta=\vartheta(v)$ such that
		\[
			F_u=(\cos{\theta},\sin{\theta}, 1),\quad F_v=(\cos{\vartheta},\sin{\vartheta}, 1),
		\]
		and ${\dif\theta}/{\dif u}$, ${\dif\vartheta}/{\dif v}$ are both solutions to the equation
		\begin{equation}\label{eq:affine_minimal}
			2\omega^4+2\omega \omega''-\frac{7}{2}\omega'^2-k\omega^3=0\quad \text{for some fixed $k\in \mathbb{R}$},
		\end{equation}
		where $'$ now denotes $\frac{\dif}{\dif u}$ or $\frac{\dif}{\dif v}$.
	\end{fact}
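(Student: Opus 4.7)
The plan is to exploit the null coordinate decomposition of Fact \ref{fact:Weierstrass2} to reduce $F$ to a canonical form parametrized by two angle functions, then impose affine minimality. The translational structure of $F$ as a sum of null curves will force the affine-minimality PDE to separate into a single ODE satisfied by both angle derivatives, following the strategy of Manhart \cite{manhart_affinminimalruckungsflachen_1985} transposed to the Lorentzian setting.

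For the canonical form, I would write $F(u,v) = \tfrac{1}{2}(\alpha(u) + \beta(v))$ with $\alpha, \beta$ the generating null curves. Writing $\alpha'(u) = (a,b,c)$, the null condition $a^2 + b^2 - c^2 = 0$ together with the assumption of no flat points forces $c$ to be nowhere vanishing (otherwise $\alpha'$ itself would vanish, contradicting regularity of the immersion). Reparametrizing $u$ appropriately absorbs the factor of $c$ and the global $\tfrac{1}{2}$, producing $F_u = (\cos\theta(u), \sin\theta(u), 1)$ and analogously $F_v = (\cos\vartheta(v), \sin\vartheta(v), 1)$. This fixes the desired normal form up to orientation.

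Next I would compute the affine (Blaschke) metric and the affine mean curvature $H_{\mathrm{aff}}$ in this parametrization. Since $F_u$ depends only on $u$ and $F_v$ only on $v$, one has $F_{uv} \equiv 0$, and determinants such as $\det(F_u, F_v, F_{uu})$ factor as $\theta'(u)$ times $\sin(\theta - \vartheta)$ or a similar product of a function of a single variable with an angle-difference term. Consequently the affine normal and affine first fundamental form inherit a clean separation into pure $u$-parts and pure $v$-parts. Setting $H_{\mathrm{aff}} = 0$ and clearing denominators yields an equation of the form $P(u) + Q(v) = 0$, which forces $P(u) \equiv -Q(v) \equiv k$ for some real constant $k$. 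A direct verification then shows that $P$ is precisely the left-hand side of \eqref{eq:affine_minimal} in $\omega = \theta'(u)$, and likewise $Q$ is the same expression in $\omega = \vartheta'(v)$, yielding the claim.

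The main obstacle is the rather tedious affine-geometric bookkeeping: sign conventions in Lorentzian signature for the Blaschke normal and the correct powers of the affine volume form must be tracked carefully to produce the precise coefficients $2,\ 2,\ -\tfrac{7}{2}$ in \eqref{eq:affine_minimal}. Once the separation-of-variables step is reached, however, the equality of the two separation constants $k$ on the $u$-side and the $v$-side, and the identification of the resulting ODE as \eqref{eq:affine_minimal}, follow automatically.
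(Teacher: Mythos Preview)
The paper does not prove this statement at all: it is stated as a \textbf{Fact} imported from Magid \cite{magid_timelike_1991-1} (who in turn applies Manhart \cite{manhart_affinminimalruckungsflachen_1985}), and the paper immediately moves on to reinterpret its content via lightlike curvatures in Proposition~\ref{thm:curvature_characterization}. So there is no in-paper proof to compare against; what you have written is essentially a reconstruction of the argument in the cited references, and as such the overall architecture (null-coordinate canonical form, translation structure forcing $F_{uv}=0$, separation of the affine-mean-curvature equation into $P(u)+Q(v)=0$) is the correct one.

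One small correction to your reasoning: the nonvanishing of the third component $c$ of $\alpha'$ does not come from the ``no flat points'' hypothesis but simply from regularity of the null curve, since $a^2+b^2=c^2$ forces $\alpha'=0$ wherever $c=0$. The ``away from flat points'' assumption is needed for a different reason: the Blaschke (affine) metric is built from the second fundamental form, and at flat points (umbilic or quasi-umbilic) that form degenerates, so the affine normal and $H_{\mathrm{aff}}$ are not defined there. You should invoke the flat-point hypothesis at the moment you set up the affine apparatus, not when deriving the angle parametrization. Apart from this misattribution, your outline matches the approach of the sources the paper cites.
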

\noindent Magid also solved the above equation explicitly.
\begin{rema}
Milnor \cite{milnor_entire_1990} called the ``angle'' functions $\theta$ and $\vartheta$ the \emph{Weierstrass functions}, and determined the sign of the Gaussian curvature of timelike minimal surfaces using the functions. 
\end{rema} 


In this subsection, we give a geometric interpretation of Fact \ref{thm:Magid} by using the notion of lightlike curvature of non-degenerate null curves. 
Let $\alpha(u)$ and $\beta(v)$ be the generating null curves of a timelike minimal surface $F$ where
	\[
		\alpha(u) = \int^u_{u_0}\left(\cos{\theta}(\tau),\sin{\theta}(\tau), 1\right) \dif\tau + \alpha(u_0),\quad
		\beta(v) = \int^v_{v_0}\left(\cos{\vartheta}(\tau),\sin{\vartheta}(\tau), 1\right) \dif\tau + \beta(v_0)
	\]
for some real constants $u_0$ and $v_0$. Here, we remark that the parameters $u$ and $v$ are not pseudo-arclength parameters. 

In the next proposition, we show that the constant $k$ in the affine minimal equation \eqref{eq:affine_minimal} represents the lightlike curvature of generating null curves, giving a geometric characterization of timelike Thomsen surfaces.
\begin{prop}\label{thm:curvature_characterization}
A timelike minimal surface $F$ satisfies the affine minimal equation \eqref{eq:affine_minimal} if and only if the generating null curves $\alpha$ and $\beta$ of $F$ have the same constant lightlike curvature.
\end{prop}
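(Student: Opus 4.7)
The plan is to establish a direct identification: Magid's equation \eqref{eq:affine_minimal} for $\omega = \theta'$ (resp.\ $\omega = \vartheta'$) is, up to clearing denominators, precisely the constancy of the lightlike curvature $\kappa_\alpha$ (resp.\ $\kappa_\beta$) of the generating null curve. Once this identification is in place, the equivalence follows immediately from Fact~\ref{thm:Magid}, because the \emph{same} constant $k$ appears in the equations for both $\theta'$ and $\vartheta'$, which will correspond to the common value $k/2$ of $\kappa_\alpha$ and $\kappa_\beta$.

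First I would observe that $\alpha'(u) = (\cos\theta, \sin\theta, 1)$ and $\alpha''(u) = \omega(-\sin\theta, \cos\theta, 0)$ with $\omega := \theta'$, so that $\langle \alpha'', \alpha''\rangle = \omega^2$ and non-degeneracy amounts to $\omega \neq 0$. Reparametrizing by pseudo-arclength requires $\dif s/\dif u = \sqrt{|\omega|}$; choosing an orientation so that $\omega > 0$ where needed, a chain-rule computation expresses $\dddot\alpha$ in the $u$-dependent frame
\[
    \big\{\alpha',\;(-\sin\theta, \cos\theta, 0),\;(-\cos\theta, -\sin\theta, 0)\big\},
\]
whose only nonzero pairings are the two unit spacelike norms and the null cross-pairing $\langle \alpha', (-\cos\theta, -\sin\theta, 0)\rangle = -1$.

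Applying Proposition~\ref{prop:Frenet} should then yield
\[
    \kappa_\alpha \;=\; \omega \;+\; \frac{\omega''}{\omega^2} \;-\; \frac{7(\omega')^2}{4\omega^3},
\]
and multiplying through by $2\omega^3$ turns the constancy $\kappa_\alpha \equiv k/2$ into exactly
\[
    2\omega^4 + 2\omega\omega'' - \tfrac{7}{2}(\omega')^2 - k\omega^3 = 0,
\]
which is \eqref{eq:affine_minimal}. The identical computation applied to $\beta$ gives the analogous identity for $\kappa_\beta$ in terms of $\vartheta'$. Invoking Fact~\ref{thm:Magid}, which demands the \emph{same} constant $k$ in the equations for both $\theta'$ and $\vartheta'$, then closes both implications at once.

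The main obstacle is purely computational: the non-constancy of $\dif s/\dif u$ generates $s''(u)$-contributions in the chain rule for $\dddot\alpha$ that mix frame components, but expanding in the $u$-dependent frame above causes the cross-terms to either vanish by the inner-product table or combine cleanly into the single coefficient $7/4$. A minor sign subtlety where $\omega$ changes sign can be absorbed by reversing the direction of the pseudo-arclength parameter; since $\alpha$ and $\beta$ are analyzed independently, no generality is lost.
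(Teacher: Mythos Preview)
Your proposal is correct and follows essentially the same route as the paper: both reparametrize by pseudo-arclength via $\dif s/\dif u=\sqrt{\theta'}$ and compute $\kappa_\alpha=\langle\dddot\alpha,\dddot\alpha\rangle$ by chain rule, arriving at the identical identity $2\omega^{3}\kappa_\alpha = 2\omega^{4}+2\omega\omega''-\tfrac{7}{2}(\omega')^{2}$, whence \eqref{eq:affine_minimal} is exactly the constancy $\kappa_\alpha=k/2$. The only cosmetic difference is that you expand $\dddot\alpha$ in the explicit frame $\{\alpha',(-\sin\theta,\cos\theta,0),(-\cos\theta,-\sin\theta,0)\}$, whereas the paper records the pairwise inner products of $\alpha',\alpha'',\alpha'''$ directly; the substance is the same, and your explicit remark that the common $k$ in Fact~\ref{thm:Magid} forces $\kappa_\alpha=\kappa_\beta$ is a nice clarification.
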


\begin{proof}
We show that the generating null curves $\alpha$ and $\beta$ must have lightlike curvature $k$.
By the similarity of the argument, it is enough to consider the claim for $\alpha(u)$.

Since $\langle \alpha',\alpha'\rangle=\theta'^2$, we may assume that $\theta'>0$, and we can take the pseudo-arclength
\[
s=\int^u_{u_0}\left(\theta'(\tau) \right)^{1/4} \dif\tau.
\]
By \eqref{eq:pseudo_arc}, we obtain
\begin{equation}\label{eq:curvature_alpha}
	\kappa_\alpha(s) = \dot{u}^6 \left\langle \alpha''', \alpha'''\right\rangle + 9 \dot{u}^2 \ddot{u}^2 \langle \alpha'', \alpha'' \rangle
			+ 6\dot{u}^4\ddot{u} \langle \alpha''', \alpha'' \rangle +2\dot{u}^3\dddot{u} \langle \alpha''', \alpha' \rangle.
\end{equation}
After straightforward calculations, we get
\begin{gather*}
	\langle \alpha'', \alpha'' \rangle = \theta'^2,\quad
		\langle \alpha''', \alpha'' \rangle = \theta' \theta'', \quad
	\langle \alpha''', \alpha' \rangle=-\theta'^2,\quad
		\langle \alpha''', \alpha''' \rangle=\theta''^2+\theta'^4, \\
	\dot{u} = \left(\theta'\right)^{-1/2},\quad
		\ddot{u} = -\frac{\theta''}{2\theta'^2},\quad
		\dddot{u} = \frac{2\theta''^2-\theta'\theta'''}{2\theta'^{7/2}}.
\end{gather*}
Substituting these to \eqref{eq:curvature_alpha}, we obtain
\[
2\theta'^3\kappa_\alpha=2\theta'^4-\frac{7}{2}\theta''^2+2\theta'\theta'''.
\]
Hence, the lightlike curvature $\kappa_\alpha$ is constant if and only if $\omega=\theta'$ satisfies the affine minimal equation \eqref{eq:affine_minimal}.
\end{proof}
In conjunction with the non-degenerate null curves with constant lightlike curvature in Example \ref{ex:const_curvature}, Proposition \ref{thm:curvature_characterization} gives another proof of the classification result of timelike Thomsen surface given in \cite{magid_timelike_1991-1}.
Furthermore, Theorem \ref{thm:curvature_characterization2} and Proposition \ref{thm:curvature_characterization} give us the next theorem relating the two classes of timelike minimal surfaces, a result different from the cases of minimal surfaces in $\mathbb{R}^3$ and maximal surfaces in $\mathbb{R}^{2,1}$.

\begin{theo}\label{thm:Thomsen_Bonnet_relation}
	Let $T$ denote the set of timelike Thomsen surfaces, $B$ the set of timelike minimal surfaces with planar curvature lines, and $B^*$ the conjugates of surfaces in $B$.
	Then,
	\begin{equation}\label{eq:set_decomposition}
		T=B\cup B^*,\quad B\cap B^*=\{\text{timelike planes} \}.
	\end{equation}
\end{theo}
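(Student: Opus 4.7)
The plan is to combine the characterizations of $T$ and $B$ via lightlike curvatures of generating null curves (Proposition~\ref{thm:curvature_characterization} and Theorem~\ref{thm:curvature_characterization2}) with the behavior of conjugation on null curves and on Hopf differentials.

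To show $B \cup B^* \subseteq T$, I would first observe that any $F \in B$ satisfies the hypothesis of Proposition~\ref{thm:curvature_characterization} by Theorem~\ref{thm:curvature_characterization2}, and hence $F \in T$. For a conjugate surface $F^* \in B^*$ arising from $F \in B$, Remark~\ref{rema:asso_family} gives the generating null curves of $F^*$ as $\alpha(u)$ and $-\beta(v)$. The pseudo-arclength condition \eqref{eq:psudo-arc} and the curvature formula \eqref{eq:pseudo_arc} depend on the curve only through $\ddot{\gamma}$ and $\dddot{\gamma}$, so $v$ remains a pseudo-arclength parameter of $-\beta$ and $\kappa_{-\beta} = \kappa_\beta$; Proposition~\ref{thm:curvature_characterization} then yields $F^* \in T$.

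For the reverse inclusion $T \subseteq B \cup B^*$, let $F \in T$ and assume $F$ is non-planar. Proposition~\ref{thm:curvature_characterization} gives $\kappa_\alpha = \kappa_\beta$ constant, and the same computation as above shows that the null curves of $F^*$ share the same constant lightlike curvature. The key point is that conjugation reverses the sign of the Gaussian curvature: Remark~\ref{rema:asso_family} sends the Hopf differential $\phoro{q}$ to $j\phoro{q}$, exchanging isothermic and anti-isothermic coordinates, which by the remark following the definition of (anti-)isothermic coordinates corresponds to swapping the sign of $K$. Hence exactly one of $F$, $F^*$ has negative Gaussian curvature on its non-flat locus, and Theorem~\ref{thm:curvature_characterization2} places that surface in $B$, giving $F \in B \cup B^*$. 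Timelike planes are included by the convention that they belong to $B$.

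Finally, for the equality $B \cap B^* = \{\text{timelike planes}\}$, timelike planes are self-conjugate and trivially lie in $B$, so the inclusion $\supseteq$ is clear. Conversely, if $F \in B \cap B^*$ were non-planar, then $F \in B$ would force $\phoro{q}$ to be real, and simultaneously $F \in B^*$ would force $j\phoro{q}$ to be real; the only $\phoro{q}$ satisfying both is $\phoro{q} \equiv 0$, making every point a flat (umbilic) point and contradicting Remark~\ref{rema:global}. The main obstacle I anticipate is justifying cleanly that conjugation reverses the sign of $K$ across the entire surface; once that is in hand via the $\phoro{q} \mapsto j\phoro{q}$ correspondence, the rest of the argument is a direct application of previously stated results.
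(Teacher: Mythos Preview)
Your approach is exactly the one the paper intends: the theorem is stated there as an immediate consequence of Theorem~\ref{thm:curvature_characterization2} and Proposition~\ref{thm:curvature_characterization}, with no further argument, and you have correctly filled in the details of how those two results combine via the behavior of null curves and the sign of $K$ under conjugation.

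One point to tighten in your treatment of $B \cap B^*$: the claim ``$F \in B$ forces $\phoro{q}$ real, $F \in B^*$ forces $j\phoro{q}$ real'' implicitly compares Hopf differentials in two \emph{different} coordinate systems, so you cannot directly conclude $\phoro{q}\equiv 0$. The cleanest fix is to use the coordinate-independent invariant you already identified: a non-planar $F \in B$ has $K<0$ by Theorem~\ref{thm:curvature_characterization2}, while $F \in B^*$ means $F^* \in B$ (since $(F^*)^* = F$), hence $K_{F^*}<0$, and your $\phoro{q}\mapsto j\phoro{q}$ argument gives $K_F = -K_{F^*}>0$, a contradiction. Alternatively, one checks that under a paraholomorphic coordinate change the Hopf differential is multiplied by $(w_z)^2 = a^2+b^2 + 2jab$, whose real part is strictly positive for $w_z\neq 0$, so ``real nonzero'' and ``pure imaginary nonzero'' are genuinely incompatible. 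Either way your conclusion stands, and the anticipated obstacle about $K$ having a definite sign on the whole surface is resolved by noting that for $F\in T$ the generating null curves are non-degenerate, hence $Q$ and $R$ are nowhere zero and keep constant sign on the (connected) domain.
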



\begin{rema}\label{rema:Thomsen_Bonnet_relation}
Note that for the minimal surface case, the relation between minimal surfaces with planar curvature lines and Thomsen surfaces can be expressed using analogous notations $\tilde{T}$, $\tilde{B}$ and $\tilde{B}^*$, denoting the set of Thomsen surfaces, the set of minimal surfaces with planar curvature lines, and the conjugates of surfaces in $\tilde{B}$, respectively, as:
	\[
		\tilde{T}=\tilde{B}^*,\quad \tilde{B}\cap \tilde{B}^*=\{\text{planes, Enneper surface} \}.
	\]
Similarly, by letting $\hat{T}$, $\hat{B}$ and $\hat{B}^*$ denote the analogous sets for maximal surfaces, respectively, we have that
	\[
		\hat{T}=\hat{B}^*,\quad \hat{B}\cap \hat{B}^*=\left\{
			\begin{gathered}
				\text{spacelike planes, maximal Enneper-type surface,}\\
				\text{associated family of spacelike catenoid with lightlike axis}
			 \end{gathered} \right\}.
	\]
\end{rema}



\subsection{Characterization of the associated family of timelike Thomsen surfaces}
Finally, as a corollary of Theorem \ref{thm:curvature_characterization2} and Proposition \ref{thm:curvature_characterization}, we can also characterize timelike minimal surfaces whose generating null curves have different constant lightlike curvature with the same sign.
\begin{coro}
	Away from flat points, a timelike minimal surface $\tilde{F}$ whose generating null curves $\alpha$ and $\beta$ have constant lightlike curvatures $\kappa_\alpha$ and $\kappa_\beta$ with the same sign is contained in the associated family of a timelike Thomsen surface $F$. In particular, $F$ is either
	\begin{itemize}
		\item a timelike minimal surface with planar curvature lines if $K < 0$, or
		\item the conjugate of a timelike minimal surface with planar curvature lines if $K > 0$. 
	\end{itemize}
	Moreover, such a timelike Thomsen surface $F$ is unique if neither lightlike curvatures of null curves is zero.
\end{coro}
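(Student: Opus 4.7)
The plan is to produce the timelike Thomsen surface $F$ explicitly by rescaling the generating null curves of $\tilde{F}$ to share a common lightlike curvature, and then to read off the associated-family parameter $\mu$ from the rescaling.

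The main computational input is the scaling behavior of the lightlike curvature: for a non-degenerate null curve $\gamma$ parametrized by pseudo-arclength $s$ with lightlike curvature $\kappa_\gamma$, and for any $\mu>0$, the pseudo-arclength of $\mu\gamma$ is $\tilde s = \sqrt{\mu}\,s$ (forced by $\langle(\mu\gamma)_{ss},(\mu\gamma)_{ss}\rangle=\mu^2$ together with the defining condition \eqref{eq:psudo-arc}), and then \eqref{eq:pseudo_arc} yields $\kappa_{\mu\gamma} = \kappa_\gamma/\mu$. Combined with the description of the associated family in Remark \ref{rema:asso_family}, this shows that the generating null curves of $F^\mu$ have pseudo-arclength lightlike curvatures $\kappa_\alpha/\mu$ and $\mu\kappa_\beta$, respectively.

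Given $\tilde F$ with lightlike curvatures $\tilde\kappa_\alpha,\tilde\kappa_\beta$ of the same sign, I seek $\tilde F = F^\mu$ for a Thomsen surface $F$, whose generating null curves must share a common lightlike curvature $\kappa$ by Proposition \ref{thm:curvature_characterization}. The system $\tilde\kappa_\alpha = \kappa/\mu$, $\tilde\kappa_\beta = \mu\kappa$ admits the positive solution $\mu = \sqrt{\tilde\kappa_\beta/\tilde\kappa_\alpha}$ with $\kappa = \mathrm{sgn}(\tilde\kappa_\alpha)\sqrt{\tilde\kappa_\alpha\tilde\kappa_\beta}$, uniquely determined when $\tilde\kappa_\alpha\tilde\kappa_\beta\neq 0$. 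I then take $F$ to be the timelike minimal surface generated by $\mu^{-1}\tilde\alpha$ and $\mu\tilde\beta$; by the scaling formula each has constant lightlike curvature $\kappa$, so $F$ is Thomsen by Proposition \ref{thm:curvature_characterization} again, and $\tilde F = F^\mu$ by construction.

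Finally, to decide whether $F$ lies in $B$ or $B^*$, I note that the associated family preserves the induced metric, since the Weierstrass data of $F^\varphi$ differ from those of $F$ only by the factor $\phoro{e}^{j\varphi}$ with $|\phoro{e}^{j\varphi}|^2 = 1$, leaving the conformal factor in \eqref{eqn:wConformal} unchanged. Hence $F$ and $\tilde F$ share the same Gaussian curvature, and since $F\in T = B\cup B^*$ with $B\cap B^* = \{\text{timelike planes}\}$ by Theorem \ref{thm:Thomsen_Bonnet_relation}, while surfaces in $B$ have $K<0$ by Theorem \ref{thm:curvature_characterization2} and conjugation flips the sign of $K$, we conclude $F\in B$ when $K<0$ and $F\in B^*$ when $K>0$. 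Uniqueness of $F$ in the nonvanishing case follows from the unique determination of $\mu$. The main technical subtlety is the scaling formula for the lightlike curvature; the rest is essentially bookkeeping of how the identified invariants transform under the associated family.
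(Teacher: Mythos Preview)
Your proposal is correct and follows essentially the same approach as the paper: both rescale the generating null curves via the associated family so that their lightlike curvatures coincide, then invoke Proposition~\ref{thm:curvature_characterization} and Theorem~\ref{thm:Thomsen_Bonnet_relation}. Your write-up is in fact more detailed than the paper's, which simply asserts the scaling law $\kappa_{\mu\alpha}=\kappa_\alpha/\mu$ and the $B$-versus-$B^*$ split without the justifications you supply; the only difference in bookkeeping is that the paper solves for the associated-family parameter taking $\tilde F$ to the Thomsen surface (obtaining $\mu=\sqrt{\kappa_\alpha/\kappa_\beta}$), while you solve for the parameter in the opposite direction (obtaining the reciprocal).
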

\begin{proof}
As in Remark \ref{rema:asso_family}, the generating null curves of $F^\mu$ are $\alpha^\mu=\mu\alpha$ and $\beta^\mu=\beta/\mu$, with lightlike curvatures $\kappa_{\mu \alpha}={\kappa_\alpha}/\mu$ and $\kappa_{\beta/\mu}=\mu \kappa_{\beta}$, respectively.
Hence, we can take the unique solution $\mu=\sqrt{{\kappa_\alpha}/{\kappa_\beta}}$ to the equation
	\[
		\kappa_{\mu\alpha}=\kappa_{\beta/\mu},\quad \mu>0,
	\]
for which $F^\mu$ is a timelike Thomsen surface.
The surface $F^\mu$ is either in $B$ or $B^*$ depending on the sign of the Gaussian curvature $K$.
\end{proof}

\begin{rema}
	One can also consider the geometric characterization of timelike minimal surfaces whose generating null curves have constant curvatures with different signs.
	By \eqref{eq:null_curvatures}, such a surface can be constructed via the equation
	\[
		\kappa_\alpha + \kappa_\beta=-4\frac{\rho_{uu}+\rho_{vv}}{\rho}=0.
	\]
	We do know that such surface is not in the set $T$ as in \eqref{eq:set_decomposition}.
	However, the geometric qualities of such surfaces are unknown.
\end{rema}

\appendix
\section{Deformation of timelike Thomsen surfaces}\label{sect:deformation}

In this section, we show that there exists a continuous deformation consisting exactly of all timelike Thomsen surfaces.
We do this by first showing that there exists a continuous deformation consisting exactly of all timelike minimal surfaces with planar curvature lines, and then applying the result that relates these surfaces to timelike Thomsen surfaces.

We have already shown in Section \ref{sect:deformationnull} that every surface in case (1a), including the timelike minimal Enneper-type surface, and the timelike plane are conjoined by a continuous deformation given by $\hat{F}_{\textup{P}}^{\hat{c}_1} (x,y)$ in \eqref{eqn:sheet1Deformation}.

\begin{figure}
	\centering
	\savebox{\mybox}{\includegraphics[width=0.2\textwidth]{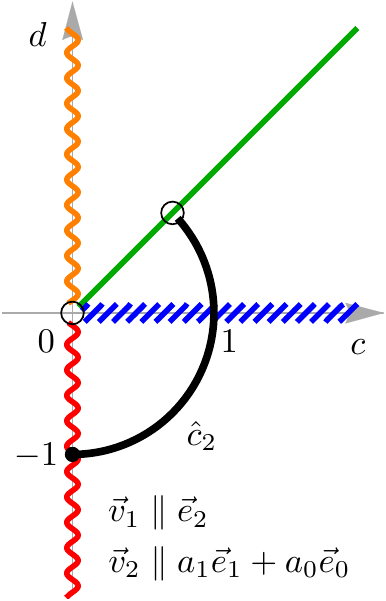}}
	\begin{subfigure}{0.35\textwidth}
		\vbox to \ht\mybox{%
			\centering
			\vfill
			\includegraphics[width=0.6\textwidth]{pathDiagDeform2.pdf}
			\vfill
		}
		\caption{Case (1b) to (1a)}
		\label{fig:pathD2}
	\end{subfigure}
	~
	\begin{subfigure}{0.35\textwidth}
		\vbox to \ht\mybox{%
			\centering
			\vfill
			\includegraphics[width=0.6\textwidth]{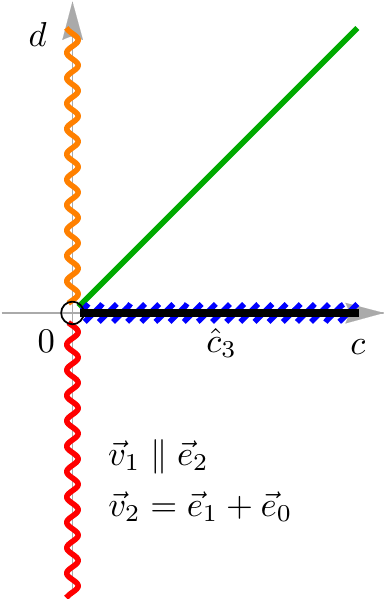}
			\vfill
		}
		\caption{Case (1b) to \textup{C\textsubscript{L}}}
		\label{fig:pathD3}
	\end{subfigure}
	\caption{$(c,d)$-paths for deformations. For the meaning of the diagram, see Figure \ref{fig:bifurcation}.}
	\label{fig:path1D}
\end{figure}

\subsection{Deformation to case (1b)}
We now show that there is a continuous deformation of all the surfaces in case (1b), and hence, all the surfaces in cases (1a) and (1b) are connected via the timelike minimal Enneper-type surface.
We first normalize the axial directions as in Section \ref{sssec:sheet2}, and let $c = \cos{\hat{c}_2}$ and $d = \sin{\hat{c}_2}$, while $a_1 = \sqrt{\cos{\hat{c}_2} - \sin{\hat{c}_2}}$ and $a_0 = \sqrt{\cos{\hat{c}_2}}$ for $\hat{c}_2 \in \left[-\tfrac{\pi}{2}, \tfrac{\pi}{4}\right]$ (see Figure \ref{fig:pathD2}).
After calculating the normal vector, we find that
	\begin{equation}\label{eqn:wDataDeform2}
		\begin{aligned}
		h_{\textup{S2}}^{\hat{c}_2}(z) &=
			\begin{cases}
				j \left(\left(\tfrac{a_0}{a_1} + 1\right)\phoro{e}^{a_1 j z} - \tfrac{a_0}{a_1}\right), &\text{if $\hat{c}_2 \neq \tfrac{\pi}{4}$},\\
				\tfrac{1}{2^{1/4}}z + j, &\text{if $\hat{c}_2 = \tfrac{\pi}{4}$},
			\end{cases}\\
		\eta_{\textup{S2}}^{\hat{c}_2}(z) &= 
			\begin{cases}
				\mathrlap{\tfrac{1}{2(a_1 + a_0)}\phoro{e}^{-a_1 j z},}\hphantom{j \left(\left(\tfrac{a_0}{a_1} + 1\right)\phoro{e}^{a_1 j z} - \tfrac{a_0}{a_1}\right),} &\text{if $\hat{c}_2 \neq \tfrac{\pi}{4}$},\\
				\tfrac{1}{2^{3/4}}, &\text{if $\hat{c}_2 \neq \tfrac{\pi}{4}$}.
			\end{cases}
		\end{aligned}
	\end{equation}

\begin{rema}
	Note that the Weierstrass data $\left\{\left(h_{\textup{S2}}^{\hat{c}_2}, \eta_{\textup{S2}}^{\hat{c}_2} \dif z\right) : \hat{c}_2 \in  \left[-\tfrac{\pi}{2}, \tfrac{\pi}{4}\right)\right\}$ describes the same set of surfaces as $\left\{\left(h_2^{c_2}, \eta_2^{c_2} \dif z\right) : c_2 \in [0, \infty)\right\}$ as in \eqref{eqn:wData2}, up to homothety and translation in the domain, the $(x, y)$-plane. Explicitly,
	\[
		h_{\textup{S2}}^{\hat{c}_2}\left(\tfrac{1}{a_1}\left(z - j \log{\left(1 + \tfrac{a_0}{a_1}\right)}\right)\right) = j\phoro{e}^{j z} - j \tfrac{a_0}{a_1} = h_2^{c_2}(z)\Big|_{c_2 = \tfrac{a_0}{a_1}}.
	\]
\end{rema}

To get the parametrization, let $F_{\textup{S2}}^{\hat{c}_2} (x,y)$ be defined from the Weierstrass data $\left(h_{\textup{S2}}^{\hat{c}_2}, \eta_{\textup{S2}}^{\hat{c}_2} \dif z\right)$ via the Weierstrass-type representation in Fact \ref{fact:Weierstrass1}, and consider
	\[
		\hat{F}_{\textup{S2}}^{\hat{c}_2} (x,y) = F_{\textup{S2}}^{\hat{c}_2} (x, y) - F_{\textup{S2}}^{\hat{c}_2} (0,0).
	\]
Then we have that
	\[
		\lim_{\hat{c}_2 \nearrow \tfrac{\pi}{4}} \hat{F}_{\textup{S2}}^{\hat{c}_2} \left(x,y - 2^{1/4}\right)  + \left(\tfrac{1}{\sqrt{2}},\, 0, \, -\tfrac{2\sqrt{2}}{3}\right) = \lim_{\hat{c}_1 \to \tfrac{\pi}{4}} \hat{F}_{\textup{P}}^{\hat{c}_1} (x, y),
	\]
implying that there is a deformation joining surfaces in case (1a) and (1b).

\subsection{Deformation to the timelike catenoid with lightlike axis}
Now we show that there exists a deformation to the timelike catenoid with lightlike axis.
Consider case (1b), where $c = {\hat{c}_3}^2$ and $d = 0$ for ${\hat{c}_3}^2 \in (0, \infty)$, and normalize the axial directions so that $\vec{v}_1 \parallel \vec{e}_2$ and $\vec{v}_2 = \vec{e}_1 + \vec{e}_0$ (see Figure \ref{fig:pathD3}).
Calculating the Weierstrass data gives
	\begin{equation}\label{eqn:wDataDeform3}
		h_{\textup{C\textsubscript{L}}}^{\hat{c}_3}(z) =
			\frac{j\left((\hat{c}_3 + 1)\phoro{e}^{j \hat{c}_3 z} - 1\right)}{(\hat{c}_3 -1)\phoro{e}^{j \hat{c}_3 z} + 1}, \quad
		\eta_{\textup{C\textsubscript{L}}}^{\hat{c}_3}(z) = 
			\tfrac{1}{4 {\hat{c}_3}^2}\phoro{e}^{-j \hat{c}_3 z} \left((\hat{c}_3 - 1)\phoro{e}^{j \hat{c}_3 z} + 1 \right)^2.
	\end{equation}
Then note that
	\[
		h_{\textup{C\textsubscript{L}}}^{\hat{c}_3}(z)\Big|_{\hat{c}_3 = 1} = 2j\phoro{e}^{j z} - j = h_{\textup{S2}}^{\hat{c}_2} (x,y)\Big|_{\hat{c}_2 = 0}, \quad
		\lim_{\hat{c}_3 \searrow 0} h_{\textup{C\textsubscript{L}}}^{\hat{c}_3}(z) = \frac{z + j}{1 - j z} = h_5(z).
	\]
Therefore, by calculating $F_{\textup{C\textsubscript{L}}}^{\hat{c}_3} (x, y)$ from $(h_{\textup{C\textsubscript{L}}}^{\hat{c}_3}(z), h_{\textup{C\textsubscript{L}}}^{\hat{c}_3}(z) \dif z)$ via Fact \ref{fact:Weierstrass1} and defining
	\[
		\hat{F}_{\textup{C\textsubscript{L}}}^{\hat{c}_3} (x,y) = F_{\textup{C\textsubscript{L}}}^{\hat{c}_3} (x, y) - F_{\textup{C\textsubscript{L}}}^{\hat{c}_3} (0,0),
	\]
we see that
	\begin{gather*}
		\hat{F}_{\textup{C\textsubscript{L}}}^{\hat{c}_3} (x,y) \Big|_{\hat{c}_3 = 1} = \hat{F}_{\textup{S2}}^{\hat{c}_2} (x,y) \Big|_{\hat{c}_2 = 0}, \\
		\lim_{\hat{c}_3 \searrow 0} \hat{F}_{\textup{C\textsubscript{L}}}^{\hat{c}_3} (x,y) = \tfrac{1}{2}\left(y - x^2 y - \tfrac{1}{3}y^3, \, -2xy, \, - y - x^2 y - \tfrac{1}{3}y^3 \right),
	\end{gather*}
implying that $\hat{F}_{\textup{C\textsubscript{L}}}^{\hat{c}_3} (x,y)$ gives a deformation between timelike minimal Bonnet-type surface with lightlike axis of first kind and timelike catenoid with lightlike axis.

\subsection{Deformation to case (1d)}
Since we have that
	\[
		h_4^{c_4}(z)\Big|_{c_4 = 0} = j \phoro{e}^{jz} = h_{\textup{S2}}^{\hat{c}_2}(z)\Big|_{\hat{c}_2 = -\frac{\pi}{2}} 
	\]
where $h_4^{c_4}$ is as in \eqref{eqn:wData4}, we define $F_{S4}^{c_4}$ using the Weierstrass data $(h_4^{c_4}, \eta_4^{c_4} \dif z)$.
Then for
	\[
		\hat{F}_{S4}^{c_4}(x,y) = F_{S4}^{c_4}(x,y) - F_{S4}^{c_4}(0,0),
	\]
we can directly check that
	\[
		\hat{F}_{S4}^{c_4}(x,y)\Big|_{c_4 = 0} = \hat{F}_{\textup{S2}}^{\hat{c}_2} (x,y)\Big|_{\hat{c}_2 = -\frac{\pi}{2}},
	\]
implying that there is a deformation joining surfaces in case (1b) and (1d).

\subsection{Deformation to the timelike minimal Bonnet-type surface with lightlike axial direction of second kind}
Finally, we show that the timelike minimal Bonnet-type surface with lightlike axial direction of second kind is also connected via a deformation to the immersed timelike catenoid with spacelike axis.
To do this, instead of re-calculating the Weierstrass data from the normal vector function, we take advantage of their respective Weierstrass data in Theorem \ref{theo:wData}, and consider
	\begin{equation}\label{eqn:wDataDeform4}
		h_{\textup{B\textsubscript{L2}}}^{\hat{c}_5} (z) = \phoro{e}^{2^{1/4}z} + j \hat{c}_5,\quad
		\eta_{\textup{B\textsubscript{L2}}}^{\hat{c}_5} (z) = \tfrac{1}{2^{5/4}}\phoro{e}^{-2^{1/4}z}
	\end{equation}
for $\hat{c}_5 \in [0,1]$.
Then it is easy to see that letting $\hat{c}_5 = 0$ gives the Weierstrass data for immersed timelike catenoid with spacelike axis, while letting $\hat{c}_5 = 1$ gives the Weierstrass data for timelike minimal Bonnet-type surface with lightlike axial direction of second kind.

Now we would like to see that the surfaces defined by $\hat{c}_5 \in (0,1)$ are also timelike minimal surfaces with planar curvature lines.
To do this, recall that the choice of the paraholomorphic $1$-form from the parameromorphic function decides the Hopf differential; therefore, a timelike minimal surface is uniquely determined by its Lorentz conformal factor up to isometries of the ambient space.
Hence, by calculating the Lorentz conformal factor from $\left(h_{\textup{B\textsubscript{L2}}}^{\hat{c}_5}, \eta_{\textup{B\textsubscript{L2}}}^{\hat{c}_5} \dif z \right)$ via \eqref{eqn:wConformal}, we find that the surfaces obtained for $\hat{c}_5 \in (0,1)$ are timelike minimal Bonnet-type surfaces with spacelike axial direction.

Using Remark \ref{rema:dpWdata} (or by directly calculating), for $F_{\textup{B\textsubscript{L2}}}^{\hat{c}_5} (x,y)$ coming from Fact \ref{fact:Weierstrass1} using the Weierstrass data $\left(h_{\textup{B\textsubscript{L2}}}^{\hat{c}_5} (z), \eta_{\textup{B\textsubscript{L2}}}^{\hat{c}_5} (z) \dif z\right)$, if we define
	\[
		\hat{F}_{\textup{B\textsubscript{L2}}}^{\hat{c}_5} (x,y) = F_{\textup{B\textsubscript{L2}}}^{\hat{c}_5} (x,y).\begin{pmatrix}
			0 & 1 & 0 \\
			-1 & 0 & 0 \\
			0 & 0 & 1
		\end{pmatrix}
			- \left( \frac{1}{\sqrt{2}},\, 0, \, 0\right),
	\]
then we have
	\[
		\hat{F}_{\textup{B\textsubscript{L2}}}^{\hat{c}_5} (x,y) \Big|_{\hat{c}_5 = 0} = \hat{F}_{\textup{P}}^{\hat{c}_1} (x,y)\Big|_{\hat{c}_1 = 0}.
	\]

Summarizing, we arrive at the following result:
\begin{theo}\label{theo:deformation}
	There exists a continuous deformation consisting exactly of all timelike minimal surfaces with planar curvature lines (see Figure \ref{fig:defoPath} and \ref{fig:deformation}).
\end{theo}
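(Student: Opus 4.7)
The plan is to stitch together the partial deformations already constructed in Section \ref{sect:deformationnull} and in the preceding subsections of the appendix into a single continuous family, and then appeal to the classification in Theorem \ref{theo:wData} to conclude that this family exhausts all timelike minimal surfaces with planar curvature lines. Concretely, I would form one combined parameter path by concatenating the $\hat{c}_1$-interval from $\hat{F}_{\textup{P}}^{\hat{c}_1}$, the $\hat{c}_2$-interval from $\hat{F}_{\textup{S2}}^{\hat{c}_2}$, the $\hat{c}_3$-interval from $\hat{F}_{\textup{C\textsubscript{L}}}^{\hat{c}_3}$, the $c_4$-interval from $\hat{F}_{S4}^{c_4}$, and the $\hat{c}_5$-interval from $\hat{F}_{\textup{B\textsubscript{L2}}}^{\hat{c}_5}$, after applying appropriate rigid motions and translations so that the endpoint surfaces match up as already verified.

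The first step is to check surface-by-surface that every element of the classification list in Theorem \ref{theo:wData} appears somewhere along the concatenated path. The $\hat{c}_1$-segment sweeps through case (1a) (surfaces \textup{C\textsubscript{T}}, \textup{B\textsubscript{Tper}}, \textup{E}, \textup{C\textsubscript{S1}}) together with the timelike plane at the two endpoints $\hat{c}_1 = -\tfrac{\pi}{4},\tfrac{3\pi}{4}$. The $\hat{c}_2$-segment sweeps through case (1b) surfaces \textup{B\textsubscript{T1}}, \textup{B\textsubscript{L1}}, \textup{B\textsubscript{S}}, \textup{C\textsubscript{S2}}. The $\hat{c}_3$-segment adjoins the timelike catenoid with lightlike axis \textup{C\textsubscript{L}} (and by Remark \ref{rema:global}, its full associated family is obtained by the further parameter $\lambda$). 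The $c_4$-segment supplies case (1d), i.e.\ \textup{B\textsubscript{T2}}. Finally the $\hat{c}_5$-segment supplies \textup{B\textsubscript{L2}} (case (1c)) and, by the Lorentz conformal factor argument already made, interpolates through \textup{B\textsubscript{S}}-type surfaces back to the plane.

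The second step is to verify that the concatenation is continuous at the joining points. The identities already recorded, namely
\[
\lim_{\hat{c}_2 \nearrow \pi/4} \hat{F}_{\textup{S2}}^{\hat{c}_2}(x, y - 2^{1/4}) + (\tfrac{1}{\sqrt{2}},0,-\tfrac{2\sqrt{2}}{3}) = \lim_{\hat{c}_1 \to \pi/4}\hat{F}_{\textup{P}}^{\hat{c}_1}(x,y),
\]
together with the matching identities at $\hat{c}_3 = 1$, at $c_4 = 0$, and at $\hat{c}_5 = 0$, establish the required compatibility once the appropriate isometries of $\mathbb{R}^{2,1}$ are inserted. A reparametrization of the combined interval to $[0,1]$ then yields a single continuous one-parameter family $\Phi_t$, where continuity is in the sense of Section \ref{sect:deformationnull} (uniform convergence on compacta, component by component).

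The final step, and where the main subtlety lies, is the word \emph{exactly}: we must verify that no surface outside the classification list sneaks into the family, and conversely that the inclusion of the one-parameter $\lambda$-family of Remark \ref{rema:global} (which accounts for the associated family of \textup{C\textsubscript{L}}, i.e.\ surface $\circled{12}$) is incorporated. The first is immediate because each segment is parametrized by Weierstrass data that, by construction and by the analysis of Proposition \ref{prop:solutionFG}, yields only timelike minimal surfaces with planar curvature lines. The second requires a small additional remark that the Lorentzian Bonnet--Lie transformation in case (2) of Remark \ref{rema:global} acts continuously in $\lambda$, so the two-parameter piece $\{\circled{11},\circled{12}\}$ is already path-connected and glues into the $\hat{c}_3$-segment at $\hat{c}_3 \searrow 0$, where the limit $h_5$ appears. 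Combining the path of surfaces with this internal $\lambda$-path yields the required continuous deformation through the full classification list, completing the proof.
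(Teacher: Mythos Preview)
Your proposal is correct and follows essentially the same approach as the paper: the theorem is obtained by concatenating the five pre-constructed segments $\hat{F}_{\textup{P}}^{\hat{c}_1}$, $\hat{F}_{\textup{S2}}^{\hat{c}_2}$, $\hat{F}_{\textup{C\textsubscript{L}}}^{\hat{c}_3}$, $\hat{F}_{S4}^{c_4}$, $\hat{F}_{\textup{B\textsubscript{L2}}}^{\hat{c}_5}$ via the endpoint-matching identities already recorded, and then invoking the classification of Theorem \ref{theo:wData} for exhaustiveness. One small correction: the $\hat{c}_5$-segment joins the main family at $\hat{c}_5 = 0$ to \textup{C\textsubscript{S1}} (that is, to $\hat{F}_{\textup{P}}^{\hat{c}_1}$ at $\hat{c}_1 = 0$), not to the plane, so the resulting deformation has the tree-like structure of Figure \ref{fig:defoPath} rather than a single arc; your explicit inclusion of the $\lambda$-family for surface $\circled{12}$ is a point the paper leaves implicit.
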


\begin{coro}[Corollary to Theorem \ref{thm:Thomsen_Bonnet_relation} and Theorem \ref{theo:deformation}]\label{cor:deformationThomsen}
There exists a continuous deformation consisting exactly of all timelike Thomsen surfaces.
\end{coro}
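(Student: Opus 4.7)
The plan is to promote the deformation of $B$ from Theorem \ref{theo:deformation} to a deformation of $T$ by gluing it to its conjugate deformation along a common timelike plane. By Theorem \ref{thm:Thomsen_Bonnet_relation}, $T = B \cup B^*$ and $B \cap B^* = \{\text{timelike planes}\}$, so producing a continuous family exhausting both $B$ and $B^*$ that matches at a plane yields exactly the desired deformation of $T$.

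First, I would apply conjugation pointwise to the deformation $\{\hat{F}_t\}$ from Theorem \ref{theo:deformation}. By Remark \ref{rema:asso_family}, conjugation acts on Weierstrass data by $(h, \eta\,\dif z) \mapsto (h, j\eta\,\dif z)$ and on generating null curves by $(\alpha,\beta) \mapsto (\alpha,-\beta)$; in either description this is a continuous operation in the parameter, since multiplication by $j$ (respectively, by $-1$) commutes with uniform convergence on compact subdomains. Consequently the conjugated family $\{\hat{F}^*_t\}$ is itself a continuous deformation, and by construction it exhausts $B^*$.

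Next, observe that the deformation from Theorem \ref{theo:deformation} passes through a timelike plane: the subfamily $\hat{F}_{\textup{P}}^{\hat{c}_1}$ degenerates to the plane $(0, \tfrac{3}{2^{3/4}}x, -\tfrac{3}{2^{3/4}}y)$ at $\hat{c}_1 = -\pi/4$ and $\hat{c}_1 = 3\pi/4$. Since the conjugate of a timelike plane is again a timelike plane, the family $\{\hat{F}^*_t\}$ likewise passes through such a plane. I would then apply a suitable continuous isometry of $\mathbb{R}^{2,1}$ to $\{\hat{F}^*_t\}$ to align its plane with the plane endpoint of $\{\hat{F}_t\}$, and concatenate the two one-parameter families into a single deformation whose image is $B \cup B^* = T$.

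The main obstacle is verifying continuity at the junction when the two pieces are concatenated; since both halves terminate at parametrizations of a timelike plane and the isometry group of $\mathbb{R}^{2,1}$ acts transitively on such parametrizations, this reduces to the purely algebraic task of choosing an appropriate ambient isometry to make the two endpoint parametrizations coincide. Once this alignment is arranged, continuity of the glued deformation follows from continuity of each piece together with agreement at the common plane, establishing the corollary.
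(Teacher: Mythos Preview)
Your approach is correct and is exactly what the paper has in mind: the corollary is stated without proof, simply as a consequence of Theorem~\ref{thm:Thomsen_Bonnet_relation} and Theorem~\ref{theo:deformation}, and your reconstruction---conjugating the $B$-deformation pointwise to obtain one for $B^*$ and gluing the two along a common timelike plane---is the natural way to fill in the details.

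One small technical correction is in order. Your claim that the isometry group of $\mathbb{R}^{2,1}$ acts transitively on such parametrizations of timelike planes is not quite right. Conjugation takes the limiting plane $(0,\tfrac{3}{2^{3/4}}x,-\tfrac{3}{2^{3/4}}y)$ to $(0,\tfrac{3}{2^{3/4}}y,-\tfrac{3}{2^{3/4}}x)$, and these two parametrizations pull back metrics of opposite sign ($\dif x^2-\dif y^2$ versus $\dif y^2-\dif x^2$), so no ambient isometry can carry one to the other. The repair is easy: precompose the entire $B^*$-family with the domain swap $(x,y)\mapsto(y,x)$ (equivalently $(u,v)\mapsto(v,u)$ in null coordinates, which simply exchanges the roles of $\alpha$ and $-\beta$). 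This leaves every surface in the family unchanged as a subset of $\mathbb{R}^{2,1}$, restores the uniform-on-compacts continuity, and makes the two plane endpoints coincide literally, after which the concatenation goes through as you describe.
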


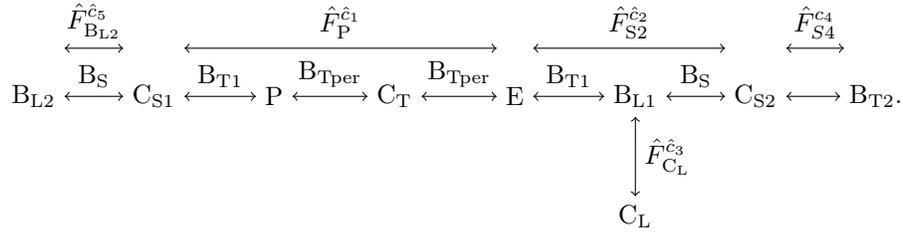
\begin{figure}
	\centering
	\begin{tikzpicture}[scale=1.6]
		\node (A) at (0,1) {\textup{B\textsubscript{L2}}};
		\node (B) at (1,1) {\textup{C\textsubscript{S1}}};
		\node (C) at (2,1) {\textup{P}};
		\node (D) at (3,1) {\textup{C\textsubscript{T}}};
		\node (E) at (4,1) {\textup{E}};
		\node (F) at (5,1) {\textup{B\textsubscript{L1}}};
		\node (G) at (6,1) {\textup{C\textsubscript{S2}}};
		\node (H) at (7,1) {\textup{B\textsubscript{T2}.}};
		\node (I) at (5,0) {\textup{C\textsubscript{L}}};
		\node (J) at (0,1.4) {$\phantom{\textup{B\textsubscript{L2}}}$};
		\node (K) at (1,1.4) {$\phantom{\textup{C\textsubscript{S1}}}$};
		\node (L) at (4,1.4) {$\phantom{\textup{E}}$};
		\node (M) at (6,1.4) {$\phantom{\textup{C\textsubscript{S2}}}$};
		\node (N) at (7,1.4) {$\phantom{\textup{B\textsubscript{T2}}}$};
		\path[<->]
			(A) edge node[above]{\textup{B\textsubscript{S}}}(B)
			(B) edge node[above]{\textup{B\textsubscript{T1}}}(C)
			(C) edge node[above]{\textup{B\textsubscript{Tper}}}(D)
			(D) edge node[above]{\textup{B\textsubscript{Tper}}}(E)
			(E) edge node[above]{\textup{B\textsubscript{T1}}}(F)
			(F) edge node[above]{\textup{B\textsubscript{S}}}(G)
			(G) edge (H)
			(J) edge node[above]{$\hat{F}_{\textup{B\textsubscript{L2}}}^{\hat{c}_5}$}(K)
			(K) edge node[above]{$\hat{F}_{\textup{P}}^{\hat{c}_1}$}(L)
			(L) edge node[above]{$\hat{F}_{\textup{S2}}^{\hat{c}_2}$}(M)
			(M) edge node[above]{$\hat{F}_{S4}^{c_4}$}(N)
			(F) edge node[right]{$\hat{F}_{\textup{C\textsubscript{L}}}^{\hat{c}_3}$} (I);
	\end{tikzpicture}
	\caption{Diagram of deformations connecting timelike minimal surfaces with planar curvature lines.}
	\label{fig:defoPath}
\end{figure}

\begin{figure}
	\centering
	\includegraphics[width=0.8\textwidth]{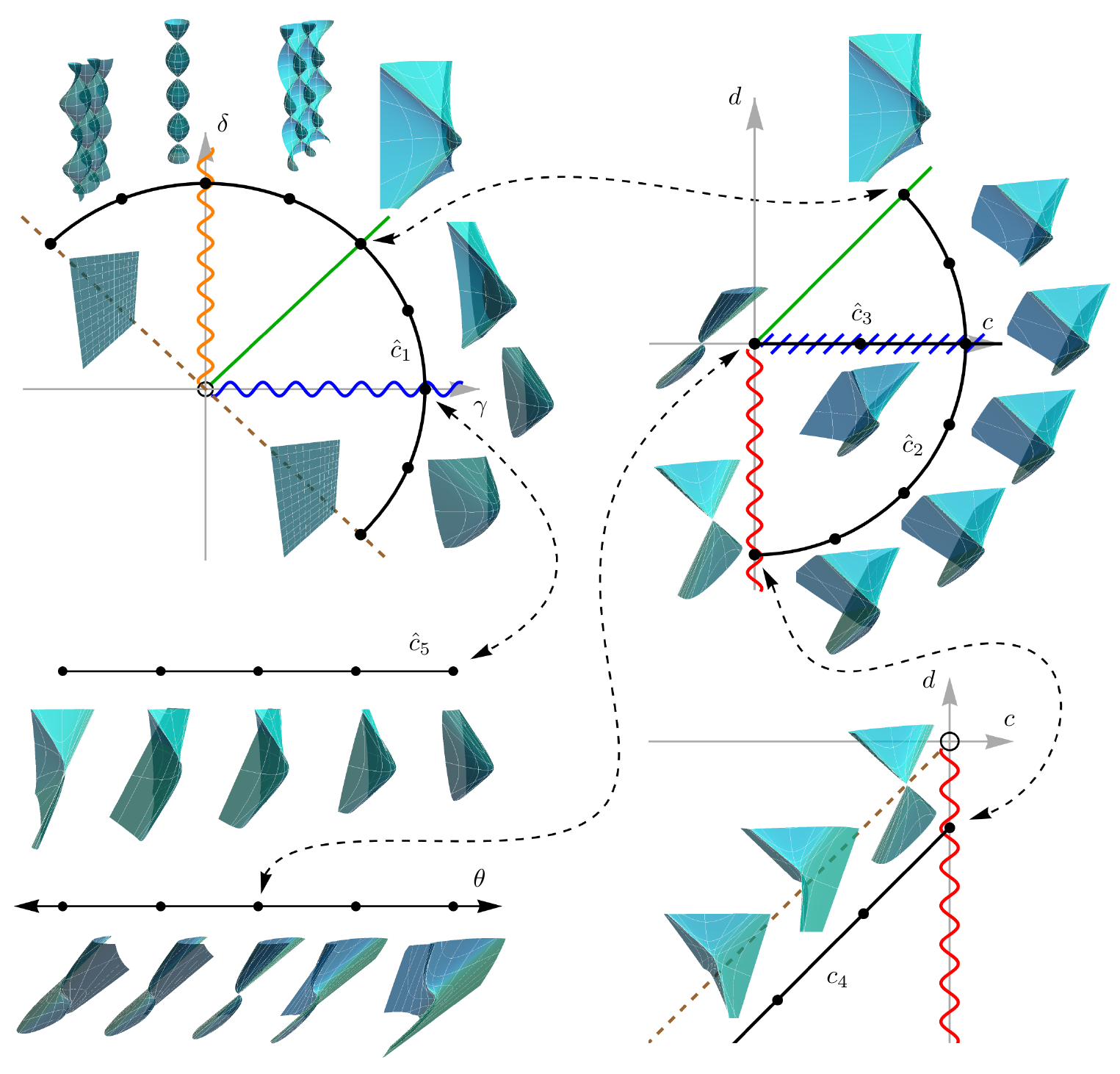}
	\caption{Continuous deformation of timelike minimal surfaces with planar curvature lines.}
	\label{fig:deformation}
\end{figure}

\section{Singularities of timelike Thomsen surfaces}\label{sect:singularities}
By Remark \ref{rema:global}, we understand that timelike minimal surfaces with planar curvature lines admit singularities, belonging to a class of surfaces called generalized timelike minimal surfaces.
However, since we have obtained the paraholomorphic $1$-form $\eta \, \dif z$ for all generalized timelike minimal surfaces with planar curvature lines in Theorem \ref{theo:wData}, we can calculate that these surfaces are actually \emph{minfaces}, using \cite[Proposition 2.7]{takahashi_tokuitenwo_2012} (see also \cite[Fact A.7]{akamine_behavior_nodate}).

We now aim to investigate the types of singularities appearing on these surfaces.
Since the types of singularities of timelike catenoids and timelike Enneper-type surfaces have been investigated in \cite[Lemma $2.12$]{kim_spacelike_2011} and \cite{takahashi_tokuitenwo_2012} (see also \cite[Example $4.5$]{akamine_behavior_nodate}), we focus on recognizing the types of singularities on timelike minimal Bonnet-type surfaces.

Let $S(F):=\{(x,y)\in\mathbb{R}^2 : \rho(x,y)=0\}=\{(x,y)\in\mathbb{R}^2 : |h(x,y)|^2=-1\}$ be the singular set.
Then using the explicit solution of the metric function in Proposition \ref{prop:solutionFG} or the explicit form of the function $h$ of the Weierstrass data in Theorem \ref{theo:wData}, we understand that the singular set becomes $1$-dimensional.
To recognize the types of singularities of timelike minimal Bonnet-type surfaces, we refer to the following results from \cite{takahashi_tokuitenwo_2012} (see also \cite[Theorem 3]{yasumoto_weierstrass-type_2018} and \cite[Fact $4.1$]{akamine_behavior_nodate}), analogous results of \cite{umehara_maximal_2006} and \cite{fujimori_singularities_2008}.

\begin{fact}\label{prop:criteria}
Let $F(x,y) : \Sigma \to \mathbb{R}^{2,1}$ be a minface with Weierstrass data $(h, \eta\,\dif z)$. Then, a point $p \in \Sigma$ is a singular point if and only if $| h(p) |^2 = -1$. Furthermore, for
\[\psi := \frac{h_z}{h^2 \eta}, \quad \Psi := \frac{h}{h_z}\psi_z,\]\ 
the image of $F$ around a singular point $p$ is locally diffeomorphic to
\begin{itemize}
	\item[$\bullet$] a cuspidal edge if and only if $\Re \psi \neq 0$ and $\Im \psi \neq 0$ at $p$, or
	\item[$\bullet$] a swallowtail if and only if $\psi \in \mathbb{R} \setminus \{0\}$ and $\Re \Psi \neq 0$ at $p$.
\end{itemize}
\end{fact}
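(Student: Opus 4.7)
The plan is to adapt to the paracomplex setting the general strategy used by Umehara--Yamada for maxfaces \cite{umehara_maximal_2006}, namely: (i) promote $F$ to a front by constructing a Legendrian lift, (ii) identify the singular set explicitly from the Weierstrass data, and (iii) compute the null direction and apply the front-type singularity criteria of Kokubu--Rossman--Saji--Umehara--Yamada.

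First, I would verify that $F$ is a front. Using Fact \ref{fact:Weierstrass1}, the $\mathbb{R}^{2,1}$-valued $1$-form $\dif F = \Re((2h, 1-h^2, -j(1+h^2))\eta\,\dif z)$ is smooth on all of $\Sigma$ even where the metric degenerates. A direct paracomplex calculation shows that the would-be Gauss map, which off the singular set is expressible (after stereographic-type projection) in terms of $h$ and $\bar h$, extends smoothly across $\{|h|^2 = -1\}$, and that the pair $(F,N)$ defines a Legendrian immersion into the unit tangent bundle of $\mathbb{R}^{2,1}$. This simultaneously gives the characterization of the singular set: since $\dif s^2 = (1+|h|^2)^2 |\eta|^2 (\dif x^2 - \dif y^2)$ and $\eta$ has only isolated zeros, $p$ is a singular point of $F$ precisely when $|h(p)|^2 = -1$ (a \emph{real} codimension-one condition in $\Sigma$, giving generically a $1$-dimensional singular set).

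Second, I would compute the null direction of $\dif F$ at a singular point. Writing $F_z = (h, \tfrac{1-h^2}{2}, -j \tfrac{1+h^2}{2})\eta$, one checks that the kernel of $\dif F$ at $p$ with $|h|^2 = -1$ is spanned (as a real direction in $T_p \Sigma$) by a vector whose paracomplex coordinates are encoded by $\psi = h_z/(h^2\eta)$. More precisely, the singular curve $S(F) = \{|h|^2 = -1\}$ has a tangent direction determined by the differential of the function $|h|^2 + 1$ (i.e.\ by $h \bar h_{\bar z}$ and its conjugate), while the null direction is determined by $\psi$. Transversality of these two directions in $\Sigma$ amounts exactly to $\Re\psi \neq 0$ and $\Im\psi \neq 0$, and tangency corresponds to $\psi \in \mathbb{R}\setminus\{0\}$ (after a paracomplex argument analogous to the complex case).

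Third, I would invoke the standard front criteria: for a front, a singular point is a \emph{cuspidal edge} iff the null direction is transverse to the singular curve, and is a \emph{swallowtail} iff the null direction is tangent to the singular curve with a non-degenerate second-order condition. Translating the first to $(\Re\psi, \Im\psi) \neq 0$-componentwise gives the cuspidal edge criterion. For the swallowtail, the additional non-degeneracy is obtained by differentiating the defining equation of the singular set along the null direction and comparing with $\Psi = (h/h_z)\psi_z$; the condition $\Re \Psi \neq 0$ then corresponds to the required non-vanishing of a second derivative.

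The main obstacle is the paracomplex bookkeeping in step two: one must carefully distinguish the roles of $\Re$ and $\Im$ with respect to the Lorentzian (rather than Euclidean) splitting $\mathbb{C}' \cong \mathbb{R}^{1,1}$, because the singular curve can be timelike, spacelike, or lightlike in $\Sigma$, and transversality of two real $1$-dimensional directions in $\mathbb{R}^{1,1}$ must be expressed in a way that reduces uniformly to the conditions on $\psi$ stated in the Fact. Once this is set up cleanly, both singularity criteria reduce to short computations from the Weierstrass data.
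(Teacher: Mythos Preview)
The paper does not prove this statement at all: it is stated as a \emph{Fact} and attributed to \cite{takahashi_tokuitenwo_2012} (with \cite{yasumoto_weierstrass-type_2018} and \cite{akamine_behavior_nodate} as further references), explicitly noted to be the timelike analogue of \cite{umehara_maximal_2006} and \cite{fujimori_singularities_2008}. There is therefore no ``paper's own proof'' to compare your proposal against.

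That said, your outline is exactly the standard route by which this result is established in the cited literature: lift $F$ to a front via the Legendrian pair $(F,N)$, read off the singular set from the conformal factor $(1+|h|^2)^2|\eta|^2$, compute the null direction of $\dif F$ in terms of $\psi$, and then apply the Kokubu--Rossman--Saji--Umehara--Yamada front criteria. Your identification of the main technical point --- that the paracomplex splitting $\mathbb{C}'\cong\mathbb{R}^{1,1}$ forces one to track $\Re\psi$ and $\Im\psi$ separately rather than simply $|\psi|$, which is why the cuspidal-edge condition reads $\Re\psi\neq 0$ \emph{and} $\Im\psi\neq 0$ rather than a single nonvanishing condition --- is correct and is precisely where the timelike case differs from the maxface case. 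So your proposal is sound, but it is a reconstruction of a result the paper merely quotes.
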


Using the Weierstrass data $(h ,\eta \dif z)$ of timelike minimal Bonnet-type surfaces from Theorem \ref{theo:wData}, we directly calculate $\psi$ and $\Psi$.
Then using Fact \ref{prop:criteria}, we arrive at the following result.
\begin{theo}\label{theo:singularityType}
Let $F(x,y)$ be a timelike minimal Bonnet-type surface with the Weierstrass data given in Theorem \ref{theo:wData}. Then, the image of $F$ around a singular point $p = (x,y)$ is locally diffeomorphic to swallowtails (SW) only at the following points.
	\begin{center}
		{\renewcommand{\arraystretch}{1.2}
		\begin{tabular}{@{} ll @{}}
			\toprule
				Surface\ & Points of SW\\
			\midrule
				\textup{B\textsubscript{Tper}} & $\left( \cos^{-1}\left(\pm1\right), \cos^{-1}\left(\pm\tfrac{\tilde{c}_1}{\sqrt{\tilde{c}_1^2+1}}\right)\right), \left( \cos^{-1}\left(0\right), \cos^{-1}\left(\pm\frac{1}{\sqrt{\tilde{c}_1^2+1}}\right)\right)$\\
				\textup{B\textsubscript{T1}} & $\left(0, \log\left(c_2+1\right)\right)$\\ 
				\textup{B\textsubscript{L1}} & $\left(0, \log 2\right)$\\ 
				\textup{B\textsubscript{S}} & $\left(0, \log\left(c_2\pm1\right)\right)$\\
				\textup{B\textsubscript{T2}} & None\\
				\textup{B\textsubscript{L2}} & None\\		
			\bottomrule
		\end{tabular}}
	\end{center}
Moreover, the images of $F$ around singular points are locally diffeomorphic to cuspidal edges everywhere else (see Figure \ref{fig:singularities}).
\end{theo}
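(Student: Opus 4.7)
The proof follows a case-by-case application of Fact \ref{prop:criteria} to each of the six timelike minimal Bonnet-type surfaces using the Weierstrass data explicitly listed in Theorem \ref{theo:wData}. The plan is to compute $\psi = h_z/(h^2 \eta)$ and $\Psi = (h/h_z)\psi_z$ as explicit paracomplex functions of $z = x + jy$, then read off their real and imaginary parts on the singular locus $\{ |h|^2 = -1 \}$, and finally check the cuspidal edge/swallowtail conditions.

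First I would handle the four cases with Weierstrass data of the \emph{exponential type} $h = j\phoro{e}^{jz} - jc_2$, $h = j\phoro{e}^{jz} + c_4$, $h = \phoro{e}^z + j$, and $h = j \phoro{e}^{jz} - j$, paired with $\eta \dif z = \tfrac{1}{2}\phoro{e}^{\mp jz}\dif z$ (covering \textup{B\textsubscript{T1}}, \textup{B\textsubscript{T2}}, \textup{B\textsubscript{L2}}, \textup{B\textsubscript{L1}}, and the spacelike case \textup{B\textsubscript{S}}). In these cases $h_z$ is a simple exponential, so $\psi = h_z/(h^2\eta)$ reduces to a paracomplex rational expression in $\phoro{e}^{jz}$ (or $\phoro{e}^z$). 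Writing $\phoro{e}^{jz} = \pcosh y \,\pcos x + j(\psinh y \, \pcos x + \text{etc.})$, the singular set equation $|h|^2 = -1$ becomes a pair of polynomial equations in $\pcosh$, $\psinh$, $\pcos$, $\psin$ of $x,y$. Along this curve, $\Im \psi = 0$ forces a discrete set of points; at each such point I verify separately whether $\Re\psi \neq 0$ (to exclude an unknown singularity) and compute $\Re\Psi$ to confirm the swallowtail condition. For \textup{B\textsubscript{T2}} and \textup{B\textsubscript{L2}}, I expect to show that $\Im\psi$ never vanishes on the singular set, thereby excluding swallowtails entirely, and that $\Re\psi$ is likewise nowhere zero, so all singularities are cuspidal edges.

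The doubly periodic case \textup{B\textsubscript{Tper}} requires the most care, as $h_1^{c_1} = \tilde{c}_1 \ptan z$ and $\eta_1^{c_1} = \tfrac{1}{2\tilde{c}_1}\pcos^2 z \,\dif z$. Here $h_z = \tilde{c}_1/\pcos^2 z$, so
\[
\psi = \frac{h_z}{h^2\eta} = \frac{2}{\tilde{c}_1 \psin^2 z},
\]
and the singularity equation $|\tilde{c}_1 \ptan z|^2 = -1$ splits into two real conditions on $(x,y)$ via the identity $\ptan z = \ptan z\bar{\phantom{.}} $ decomposition. Then $\psi \in \mathbb{R}\setminus\{0\}$ amounts to $\Im(1/\psin^2 z) = 0$, a condition on $x$ and $y$ that can be solved by writing $\psin z = \psin x \, \pcos(jy) + \pcos x \, \psin(jy)$ and separating. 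Intersecting this with the singular locus yields precisely the four families of candidate points listed in the statement for \textup{B\textsubscript{Tper}}, after which $\Re \Psi$ must be shown to be nonzero at each of them.

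The main obstacle will be the \textup{B\textsubscript{Tper}} calculation, where the interplay between $\pcos$, $\psin$ of paracomplex arguments and the constraint $|\tilde{c}_1\ptan z|^2 = -1$ produces somewhat intricate trigonometric identities; in particular, matching the solutions of the system to the tidy form $\cos^{-1}(\pm 1)$, $\cos^{-1}(\pm \tilde{c}_1/\sqrt{\tilde{c}_1^2+1})$, etc., requires careful bookkeeping. A secondary difficulty is that $h_z$ vanishes or $h$ becomes singular at isolated points in several cases, so I must check that these branch/pole points do not lie on the singular set (or, if they do, treat them separately). Once $\psi$ and $\Psi$ are in hand, however, the verification of the cuspidal edge criterion $\Re\psi \neq 0 \neq \Im\psi$ on the complement of the swallowtail points reduces to showing that certain explicit analytic expressions are nonvanishing, which follows by direct inspection using the parametrizations of the singular curves obtained from $|h|^2 = -1$.
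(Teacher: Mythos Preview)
Your approach is exactly the one the paper uses: it simply says ``directly calculate $\psi$ and $\Psi$'' from the Weierstrass data of Theorem~\ref{theo:wData} and then apply Fact~\ref{prop:criteria}, which is precisely your case-by-case plan. One small computational slip to fix before you carry it out: for \textup{B\textsubscript{Tper}} with $h=\tilde{c}_1\ptan z$ and $\eta=\tfrac{1}{2\tilde{c}_1}\pcos^2 z$, you get $h_z=\tilde{c}_1/\pcos^2 z$ and $h^2\eta=\tfrac{\tilde{c}_1}{2}\psin^2 z$, hence $\psi=2/(\psin^2 z\,\pcos^2 z)$, not $2/(\tilde{c}_1\psin^2 z)$ as you wrote; this does not affect the strategy but will matter when you intersect $\Im\psi=0$ with the singular locus.
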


Combined with the result in \cite{kim_spacelike_2011, takahashi_tokuitenwo_2012, akamine_behavior_nodate}, we obtain the following corollary.
\begin{coro}\label{cor:singularities_Bonnet}
	Let $F(x,y)$ be a minface with planar curvature lines.
	If $p$ is a singular point of $F(x,y)$, then the image of $F$ around the singular point $p$ must be locally diffeomorphic to one of the following: cuspidal edge, swallowtail or conelike (or shrinking) singularity.
\end{coro}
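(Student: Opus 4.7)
The plan is to leverage the complete classification of minfaces with planar curvature lines established in Theorem \ref{theo:wData}, and then dispatch each class to an appropriate singularity-analysis result. Since the classification is exhaustive, it suffices to verify the claim on each member of the list, so the corollary reduces to a finite case check.

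First, I would dispose of the trivial case: the plane \textup{(P)} is a regular immersion everywhere and so contributes no singularities. Next, the six timelike minimal Bonnet-type surfaces \textup{B\textsubscript{Tper}}, \textup{B\textsubscript{T1}}, \textup{B\textsubscript{L1}}, \textup{B\textsubscript{S}}, \textup{B\textsubscript{T2}}, and \textup{B\textsubscript{L2}} are handled directly by Theorem \ref{theo:singularityType}, which locates the finitely many swallowtails and shows that every remaining singular point is a cuspidal edge. No cone-like points appear here.

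It then remains to treat the timelike catenoids \textup{C\textsubscript{T}}, \textup{C\textsubscript{S1}}, \textup{C\textsubscript{S2}}, \textup{C\textsubscript{L}}, and the timelike Enneper-type surface \textup{E}, together with the associated family member of \textup{C\textsubscript{L}} listed as \circlenum{12}. For these, I would invoke the existing singularity analyses: \cite[Lemma 2.12]{kim_spacelike_2011} identifies the cone-like (shrinking) singularities on the timelike catenoids, while \cite{takahashi_tokuitenwo_2012} and \cite[Example 4.5]{akamine_behavior_nodate} classify the cuspidal edges and swallowtails on the timelike Enneper-type surface (and on the remaining singular loci of the catenoids). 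Combining these, every singular point of every minface with planar curvature lines is a cuspidal edge, a swallowtail, or a cone-like singularity, as claimed.

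The only step requiring genuine verification — rather than bookkeeping — is confirming that each surface in Theorem \ref{theo:wData} really is a minface on its full domain, so that Fact \ref{prop:criteria} and the cited criteria apply. This is precisely the observation recorded at the start of Section \ref{sect:singularities}: the explicit paraholomorphic $1$-form $\eta\,\dif z$ provided by Theorem \ref{theo:wData}, combined with \cite[Proposition 2.7]{takahashi_tokuitenwo_2012}, upgrades the generalized timelike minimal structure to a minface structure. Once this is in hand, the corollary is essentially a compilation of Theorem \ref{theo:singularityType} with the prior literature, and no additional mathematical obstacle arises.
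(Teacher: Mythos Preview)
Your proposal is correct and follows essentially the same route as the paper: the corollary is stated immediately after Theorem~\ref{theo:singularityType} with the remark that it follows by combining that theorem with the singularity analyses of the catenoids and Enneper-type surface in \cite{kim_spacelike_2011, takahashi_tokuitenwo_2012, akamine_behavior_nodate}. Your write-up simply makes the case breakdown more explicit than the paper does.
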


Using the duality for singularities on timelike minimal surfaces and their conjugate surfaces, proved in \cite{kim_spacelike_2011} and \cite{takahashi_tokuitenwo_2012} (cf. \cite[Fact A.12]{akamine_behavior_nodate}), we finally obtain the following result:
\begin{coro}[Corollary to Theorem \ref{thm:Thomsen_Bonnet_relation} and Corollary \ref{cor:singularities_Bonnet}]\label{cor:singularities_Thomsen} 
	Any singular point on a timelike Thomsen surface is locally diffeomorphic to one of the following: cuspidal edge, swallowtail, cuspidal cross cap, conelike (or shrinking) singularity or fold singularity.
\end{coro}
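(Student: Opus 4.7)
The plan is to combine Theorem \ref{thm:Thomsen_Bonnet_relation} with Corollary \ref{cor:singularities_Bonnet} and the singularity duality for conjugate timelike minimal surfaces referenced in \cite{kim_spacelike_2011, takahashi_tokuitenwo_2012, akamine_behavior_nodate}. By Theorem \ref{thm:Thomsen_Bonnet_relation}, the set $T$ of timelike Thomsen surfaces decomposes as $T = B \cup B^*$, so that any timelike Thomsen surface $F$ is either a timelike minimal surface with planar curvature lines, or the conjugate of one. Consequently, it suffices to analyze singularities in each of these two cases and take the union of singularity types that can arise.

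First I would handle the case $F \in B$: by Corollary \ref{cor:singularities_Bonnet}, every singular point of $F$ is locally diffeomorphic to a cuspidal edge, a swallowtail, or a conelike (shrinking) singularity. These three types are already on the list claimed in Corollary \ref{cor:singularities_Thomsen}, so this case requires no further work beyond citing the earlier corollary. Observe in particular that the timelike plane lies in $B \cap B^*$ and is smooth, so it contributes nothing new.

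Next, for $F \in B^*$, I would write $F = G^*$ for some $G \in B$ and invoke the duality theorem for singularities on timelike minimal surfaces and their conjugates (\cite[Fact A.12]{akamine_behavior_nodate}, cf.\ \cite{kim_spacelike_2011, takahashi_tokuitenwo_2012}). This duality sends a singular point of $G$ to a singular point of $G^*$, and transports the singularity types as follows: cuspidal edge $\leftrightarrow$ cuspidal edge, swallowtail $\leftrightarrow$ cuspidal cross cap, and conelike (shrinking) singularity $\leftrightarrow$ fold singularity. Applying this dictionary to the three types listed in Corollary \ref{cor:singularities_Bonnet} shows that the singularities appearing on $F = G^*$ are, respectively, cuspidal edges, cuspidal cross caps, or fold singularities.

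Taking the union of the singularity types from both cases yields exactly the list in the statement: cuspidal edge, swallowtail, cuspidal cross cap, conelike (shrinking) singularity, and fold singularity. The only subtlety I would be careful about is the precise formulation of the duality: one must verify that the surfaces we deal with (which are minfaces by the remark preceding Fact \ref{prop:criteria}) satisfy the regularity hypotheses under which the duality of \cite{kim_spacelike_2011, takahashi_tokuitenwo_2012} applies, and that the correspondence between singularity types is the one quoted above. Since this has been established in the cited literature for general minfaces, and all of our surfaces are minfaces, no further work is needed and the statement follows.
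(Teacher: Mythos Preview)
Your argument is correct and follows essentially the same route as the paper: decompose $T = B \cup B^*$ via Theorem \ref{thm:Thomsen_Bonnet_relation}, read off the singularity types on $B$ from Corollary \ref{cor:singularities_Bonnet}, and then apply the singularity duality of \cite{kim_spacelike_2011, takahashi_tokuitenwo_2012} (cf.\ \cite[Fact A.12]{akamine_behavior_nodate}) to obtain the corresponding types on $B^*$. The paper leaves this reasoning implicit in the sentence preceding the corollary, so your write-up simply makes explicit what the authors sketch.
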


\begin{rema}
Note that in Figure \ref{fig:lightlikeline}, the surface \textup{B\textsubscript{L2}} defined over the domain $\mathbb{C}'$ is drawn; in fact, this surface can be extended to a lightlike line (drawn as a yellow line in Figure \ref{fig:lightlikeline}) as in the cases of catenoids with spacelike and lightlike axes (\cite{fujimori_zero_2012,fujimori_zero_2015}).

To see this explicitly, first note that the surface $\circled{9}$ in Theorem \ref{theo:wData} is parametrized as 
	\[
		F(x,y)=\left(x + e^{-x}\sinh{y},- y - \tfrac{e^x}{2}\cosh{y}, - x - (e^{-x}+\tfrac{e^{x}}{2}) \sinh{y} \right).
	\]
Putting $\varrho(x) = - x - \tilde{y}$ for $\tilde{y} \in \mathbb{R}$, we note that
	\[
		\lim_{x \to -\infty} F(x,\sinh^{-1}{(e^{x}\varrho(x)})) = (- \tilde{y},  0, \tilde{y}).
	\]
\end{rema}



\vspace{15pt}
\textbf{Acknowledgements.} The authors would like to thank Professor Masaaki Umehara for encouragement in researching this topic at the conference ``The Third Japanese-Spanish Workshop on Differential Geometry'' held at Instituto de Ciencias Matem\'{a}ticas.
Also, the authors would like to thank Professor Junichi Inoguchi, Professor Wayne Rossman, and the referee for numerous valuable comments, while the first author would like to thank Professor Atsufumi Honda for various suggestions on techniques involving null curves.
Furthermore, the first and the third authors would like to express gratitude to the organizers of the aforementioned conference for their support and hospitality during the visit.
The first and the second authors were partially supported by JSPS/FWF Bilateral Joint Project I3809-N32 “Geometric shape generation"; the third author was supported by the National Institute of Technology via ``Support Program for Presentation at International Conferences'' to attend the aforementioned conference, and was also partly supported by JSPS Grant-in-Aid for Research Activity Start-up No.\ 17H07321.


\bibliographystyle{abbrvnat}

\end{document}